\newtheorem{thrm}{Theorem}[section]
\newtheorem{lemma}[thrm]{Lemma}
\newtheorem{prop}[thrm]{Proposition}
\newtheorem{main}{Main Theorem}
\theoremstyle{definition}
\newtheorem{defn}[thrm]{Definition}
\theoremstyle{remark}
\newtheorem{remark}[thrm]{Remark}
\numberwithin{equation}{section}
\newcommand{\dbar}{$\bar{\partial}$}
\newcommand{\mdbar}{\bar{\partial}}
\newcommand{\lre}{\mathscr{E}}
\newcommand{\lra}{\mathscr{A}}
\newcommand{\lrm}{\mathcal{M}}
\newcommand{\lrl}{\mathcal{L}}
\newcommand{\lrg}{\mathcal{G}}
\newcommand{\lrt}{\mathcal{T}}
\newcommand{\lrn}{\mathcal{N}}
  \newcommand{\lrh}{\mathscr{H}}
\begin{document}

\bibliographystyle{plain}

\title[Principal parts]{Principal parts of operators in the
\dbar-Neumann problem on strictly pseudoconvex non-smooth domains}
\author{Dariush Ehsani$^1$ and Ingo Lieb$^2$}

\subjclass[2000]{Primary 32A25, 32W05}

\thanks{The first author was partially supported by the Alexander von Humboldt Stiftung
and by the Max Planck Gesellschaft.}

\maketitle

\vskip .5\baselineskip
 {\em
\noindent $^1$ Humboldt-Universit\"{a}t, Institut f\"{u}r
Mathematik, 10099 Berlin
\newline
Email: {\small\texttt{dehsani.math@gmail.com}}
 \\ \newline
 $^2$ Universit\"{a}t Bonn, Mathematisches Institut, Endenicher Allee 60, 53115 Bonn, Deutschland
\newline
Email: {\small\texttt{ilieb@math.uni-bonn.de}}
  }

\section{Introduction}

Let $D$ be a bounded domain in $\mathbb{C}^n$.  We consider the
operator of the Cauchy-Riemann equations
\begin{equation*}
\mdbar: L^2_{0,q}(D) \rightarrow L^2_{0,q+1}(D),
\end{equation*}
where the $L^2$-spaces on $D$ are defined in terms of a Hermitian
metric given on $\mathbb{C}^n$, its Hilbert space adjoint,
\begin{equation*}
\mdbar^{\ast}: L^2_{0,q+1}(D) \rightarrow L^2_{0,q}(D),
\end{equation*}
and the associated complex Laplacian
\begin{equation*}
\square_q = \bar{\partial} \bar{\partial}^{\ast} + \bar{\partial}
\bar{\partial}^{\ast} :
 L^2_{0,q}(D) \rightarrow L^2_{0,q}(D),
\end{equation*}
whose domain of definition is singled out by the conditions
\begin{equation*}
u   \in Dom(\bar{\partial})\cap Dom(\bar{\partial}^{\ast}),
 \quad \mdbar u \in Dom(\bar{\partial}^{\ast}),
\quad \mdbar^{\ast} u \in Dom(\bar{\partial}).
\end{equation*}

The \dbar-Neumann problem asks to solve the equation
\begin{equation*}
\square u = f \qquad \mbox{for } f \perp ker(\square),
\end{equation*}
with $u\in Dom(\square)$.  We will study this problem on strictly
pseudoconvex domains which may have singularities at the boundary.
More precisely:
\begin{defn}
$D\subset\subset \mathbb{C}^n$ is a Henkin-Leiterer (HL) domain if
there is a strictly plurisubharmonic smooth function $r$ on a
neighborhood $U$ of the boundary $\partial D$ such that
\begin{equation*}
U\cap D=\{ \zeta\in U: r(\zeta)<0\}.
\end{equation*}
\end{defn}
 We shall make the additional assumption that $r$ is a Morse
function.  Then $\partial D = \{ \zeta: r(\zeta)=0\}$ and we may
assume that $r$ has finitely many critical points on the boundary,
and none on $U\setminus \partial D$.

Under these - and even more general - conditions the \dbar-Neumann
problem is solvable in the following sense: there is a linear
operator
\begin{equation*}
N:L^2_{0,q}(D) \rightarrow Dom(\square)
\end{equation*}
such that one has the orthogonal decomposition
\begin{align*}
L^2_{0,q}(D) =& ker (\square) \oplus \square N \left( L^2_{0,q} (D)\right)\\
=&ker (\square) \oplus \mdbar\mdbar^{\ast} N\left(L^2_{0,q}
(D)\right)
 \oplus \mdbar^{\ast}\mdbar N\left(L^2_{0,q} (D)\right).
\end{align*}
$N$  is called the \dbar-Neumann operator.  For $q>0$, one knows
that the harmonic space $ker(\square)$ is zero; this is no longer
true on more general manifolds.  For $q=0$, $ker(\square)$ is the
space of square integrable holomorphic functions.

We can now formulate our aim: to express the abstract operators,
$N$, $\mdbar N$, $\mdbar^{\ast} N$, as integral operators with
explicit (in terms of the defining function and the metric)
kernels.  The expression should be valid up to error terms which
have stronger smoothing properties than the explicit terms;
consequently, the boundedness properties of the above operators in
various function spaces ($L^p$-spaces, for instance) can be read
off the corresponding properties of the integral operators (which
have to be established, of course).

This program has been implemented in the case of smoothly bounded
domains by the work of many people - see \cite{LiMi} for
historical comments; we carry it over to the non-smooth HL case.

In order to state our results we now describe some needed
conventions and notations which will be kept fixed throughout this
paper. The metric on $\mathbb{C}^n$ will be chosen to coincide,
near the boundary of $D$, with the Levi form of $r$:
\begin{equation}
\label{met}
 ds^2 = \sum r_{i\bar{j}}(\zeta)d\zeta_i
d\overline{\zeta}_j.
\end{equation}
Any such metric is called a Levi metric; the \dbar-Neumann problem
is formulated in terms of this metric.

We set
\begin{equation*}
\label{gamdef}
 \gamma(\zeta)=|\partial r(\zeta)|,
\end{equation*}
where the length is measured by the metric in (\ref{met}).

For a double differential form $\mathcal{K}(\zeta,z)$ on $D\times
D$ we define the corresponding integral operator, $K$ by the
formula
\begin{equation*}
 \label{defop}
K f(z) = \int_{\zeta\in D} f(\zeta)\wedge \ast_{\zeta} \overline{
 \mathcal{K} (\zeta,z)}
\end{equation*}
and call $\mathcal{K}$ the kernel of $K$.  Here $\ast_{\zeta}$ is
the Hodge operator for the metric (\ref{met}), and $f$ is a
differential form.  If the types of $f$ and $\mathcal{K}$ do not
match, the integral is 0 by definition.  We finally set
\begin{equation}
\label{adker}
 \mathcal{K}^{\ast}(\zeta,z) = \overline{\mathcal{K}(z,\zeta)} ;
\end{equation}
in particular $\gamma^{\ast}(\zeta)=\gamma(z)$.

The first step in our program has already been done by the first
author in \cite{Eh10}:
\begin{thrm}
\label{lrethrm}
 Let $D\subset\subset X$ be a HL domain in a complex manifold $X$,
 given by a Morse defining
function $r$.
   There are
integral operators of type 1,
\begin{equation*}
{\bf T}_q:L^2_{0,q+1}(D)\rightarrow L^2_{0,q}(D)
\end{equation*}
such that for
 $f\in L^2_{0,q}(D)\cap
Dom(\bar{\partial})\cap Dom(\bar{\partial}^{\ast})$
\begin{equation*}
 f= {\bf T}_q\bar{\partial}f+ {\bf T}_{q-1}^{\ast}\bar{\partial}^{\ast}f
+\mbox{ error terms } \quad \mbox{ for } 1\le q< n=dim X,
\end{equation*}
where the error terms, after multiplication with suitable powers
of $\gamma$, involve only operators with better smoothing
properties than the principal terms, with $f$, $\mdbar f$, and
$\mdbar^{\ast}f$ arguments.
\end{thrm}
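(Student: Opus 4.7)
The plan is to build integral operators of Henkin--Ramirez type adapted to the defining function $r$, then derive a Koppelman-type identity and reinterpret its boundary term in terms of $\mdbar^{\ast} f$. First I would construct a Leray section: a $(1,0)$-form $w(\zeta,z)$ in $\zeta$, holomorphic in $z$, whose pairing $\langle w(\zeta,z),\zeta-z\rangle = \Phi(\zeta,z)$ is a Henkin--Leiterer support function, built from the Levi polynomial of $r$. On a neighborhood of any non-critical boundary point one has the standard bound $2\,\mathrm{Re}\,\Phi(\zeta,z)\geq -r(\zeta)-r(z)+c|\zeta-z|^2$, and near a Morse critical point $p$ one uses the explicit quadratic normal form of $r$ to extend $w$ with controlled dependence on $\gamma(\zeta)$ and $\gamma(z)$. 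The combination of this Leray section with the Bochner--Martinelli section then gives a Cauchy--Fantappi\`e double form $\Omega(\zeta,z)$ whose components in $\zeta$ and $z$ define candidate kernels $\mathcal{K}_q$ for the operators ${\bf T}_q$.

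Next I would establish the Koppelman formula in the smooth interior: for $f$ smooth and compactly supported in $D$,
\begin{equation*}
f(z) = \int_{\partial D} f\wedge \Omega + \mdbar_z \int_D f\wedge \Omega + \int_D \mdbar f \wedge \Omega,
\end{equation*}
after the usual Stokes computation, and then pass to general $f\in L^2_{0,q}(D)\cap Dom(\mdbar)\cap Dom(\mdbar^{\ast})$ by regularization. The second term on the right is handled by integration by parts: writing $\mdbar_z$ as an operator acting on the kernel $\mathcal{K}(\zeta,z)$ and using the definition (\ref{adker}) of $\mathcal{K}^{\ast}$, one sees that $\mdbar_z\int_D f\wedge\ast_{\zeta}\overline{\mathcal{K}}$ equals, up to commutator terms, an integral operator of the form ${\bf T}_{q-1}^{\ast}\mdbar^{\ast}f$ acting on $\mdbar^{\ast}f$, since moving the differentiation onto $\zeta$ and taking adjoints in the Hermitian pairing converts $\mdbar_z$ on the kernel into $\mdbar^{\ast}$ on $f$. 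The boundary integral in turn splits: under the domain assumption $f\in Dom(\mdbar^{\ast})$ the principal part of $\int_{\partial D} f\wedge\Omega$ vanishes, leaving only contributions localized near the critical points and terms involving $\gamma$ which are absorbed into the error.

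The structure of the kernels then gives ${\bf T}_q$ as an integral operator of type 1 in the Henkin calculus, and the Koppelman identity rearranges to
\begin{equation*}
f = {\bf T}_q\mdbar f + {\bf T}_{q-1}^{\ast}\mdbar^{\ast}f + R f,
\end{equation*}
with $R$ collecting (i) commutator terms from interchanging $\mdbar_z$ with the integration, (ii) lower-order pieces arising from differentiating the $\gamma$-dependent correction of $w$ near critical points, and (iii) the surviving part of the boundary integral. Each of these pieces carries extra vanishing of $|\zeta-z|$ or of $\Phi$ compared with the principal kernel, which, after multiplication by appropriate powers of $\gamma$, gives a stronger smoothing than type~1.

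The main obstacle is the behavior at the Morse critical points of $r$: there $\gamma\to 0$, the support function $\Phi$ loses its positivity, and the usual Henkin estimates degenerate. Handling this requires a careful patching of the Leray section on scales comparable to $\gamma(\zeta)$ and $\gamma(z)$, and a quantitative verification that every term thereby produced either fits into the type-1 calculus after $\gamma$-normalization or belongs to the remainder with strictly better smoothing. Once this local analysis near the singular set is in place, the global identity follows by the standard Koppelman--Stokes manipulation together with a density argument in the graph norm of $\mdbar\oplus\mdbar^{\ast}$.
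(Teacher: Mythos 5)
The paper does not actually prove Theorem \ref{lrethrm}; it imports it from \cite{Eh10} and only records, in Section \ref{prelim}, the structure of the kernels: $\lrt_q=\vartheta_{\zeta}\lrl_q-\partial_z\lrl_{q-1}+\mdbar_{\zeta}\Gamma_{0q}$, where $\lrl_q$ is built from the Cauchy--Fantappi\`e combination $C_q(\alpha,\beta)$ of the Henkin section $\alpha=\xi\,\partial r/\Phi$ and the isotropic section $\beta=\partial_{\zeta}\rho^2/\rho^2$, and $\Gamma_{0q}$ is a parametrix of the complex Laplacian. Measured against that, your sketch captures the right raw materials (Leray section from the Levi polynomial of $r$, Cauchy--Fantappi\`e forms, Morse normal form near the critical points) but follows a different and, as written, incomplete route: you start from the plain Koppelman identity rather than from the Lieb--Range $\mdbar$-Neumann homotopy formula, and you omit the parametrix $\Gamma_{0q}$ entirely, even though its derivative $\mdbar_{\zeta}\Gamma_{0q}$ is an explicit summand of $\lrt_q$ and carries part of the principal type-1 singularity.

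The concrete gap is in the step where you convert $\mdbar_z\int_D f\wedge\Omega$ into ${\bf T}_{q-1}^{\ast}\mdbar^{\ast}f$. That term is the exterior derivative in $z$ of a solid integral; ``moving the differentiation onto $\zeta$ and taking adjoints'' does not by itself produce $\mdbar^{\ast}f$ as an argument. To get $\mdbar^{\ast}f$ one must integrate by parts in $\zeta$, i.e.\ transfer a $\vartheta_{\zeta}$ from the kernel onto $f$; this generates new boundary integrals over $\partial D$, and these vanish for $f\in Dom(\mdbar^{\ast})$ only if the kernel satisfies the complementary boundary condition ($\ast_{\zeta}\lrn_q|_{\partial D}=0$ in the paper's normalization). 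Arranging that condition is exactly what forces the specific coefficients $a_{q\mu\nu}$ in $C_q(\alpha,\beta)$ and the correction by $\Gamma_{0q}$; your claim that ``the principal part of $\int_{\partial D}f\wedge\Omega$ vanishes'' under the domain assumption is false for the uncorrected Cauchy--Fantappi\`e boundary kernel, since $f\in Dom(\mdbar^{\ast})$ kills only the normal components of $f$ on the boundary. Finally, the genuinely hard content of the theorem --- verifying that every remainder produced near the Morse critical points, where $\gamma\to 0$ and the estimate $|\Phi|\gtrsim\rho^2+|r(\zeta)|+|r(z)|$ degenerates, lands in the $\gamma$-weighted admissible classes of strictly better type --- is acknowledged in your last paragraph but not carried out; that verification is the substance of \cite{Eh10} and cannot be treated as routine.
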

 The error terms will be explicitly described in Section
 \ref{repprin}, in Theorem \ref{bir}.
The type will be defined in Section \ref{admis}: it describes the
continuity properties of the operators in question.  A detailed
description of the operators $T_q$ is given \cite{Eh10}; we will
resume it in Section \ref{prelim}.  Although $X$ above can be an
arbitrary complex manifold we shall restrict attention, in this
paper, to $X=\mathbb{C}^n$.  The necessary adjustments in the
general case can be made as in \cite{LiMi} or \cite{LR87}.

In order to express our main results, we introduce the notion of
principal part.  We only indicate here what we mean and refer to
Section \ref{zop} for the precise definition.
 Let $A:L^2(D)\rightarrow L^2(D)$.
  $B$ is called a principal part of $A$ if for all large $L$
\begin{equation*}
\gamma^LA=\gamma^L B  + C,
\end{equation*}
 where now $\gamma^LB$ is an admissible integral operator (to be precise: a $Z$-
 operator) and
 where
 $C$ has better continuity properties than $\gamma^LB$.  We recall
the definition of "admissible" in Section \ref{admis}, the
definition of a $Z$-operator in Section \ref{zop}.

Our main results are the calculations of the principal parts of
the operators $N_q$, $\mdbar N_q$, and $\mdbar^{\ast} N_q$.  For
$N_q$ we calculate explicitly an integral operator, $N_q^0$, with
kernel, $\lrn_q$, such that we have
\begin{main}
\label{mainpp} For $1\le q\le n-3$:\\
 $a)$ $N_q^0$ is a principal part of the Neumann operator, $N_q$\\
 $b)$ ${\bold T}_{q-1}$ is a principal part of $\mdbar^{\ast}N_q$ \\
 $c)$ ${\bold T}_{q}^{\ast}$ is a principal part of $\mdbar N_q$.
\end{main}
Similar results hold for $q=n-2$, but will not be proven here.
See \cite{LiMi} for details.

We can interpret some of our earlier results on the Bergman
projector in terms of principal parts:
\begin{main}
The admissible operator $P_0^{\ast}$ of \cite{EhLi} is a principal
part of the Bergman projector $P$.
\end{main}
 See also \cite{Eh09c} for the Bergman projection in the setting
of domains in complex manifolds.

From the above representation we get, in view of the known
continuity properties of admissible operators, estimates for the
Neuman operator, which we express as follows:
\begin{main}
\label{mainnq} For $q$ as in Main Theorem \ref{mainpp}, and for
all $p\ge 2$ and $s$ such that
\begin{equation*}
\frac{1}{s} > \frac{1}{p} - \frac{1}{n+1}
\end{equation*}
we have the estimates
\begin{equation*}
a) \| \gamma^{3(n+2)} N_q f \|_{L^{s}} \lesssim \| \gamma^2 f
\|_{L^p} + \|f\|_{L^2},
\end{equation*}
and for $f\in dom\square \subset L^2_{0,q}(D)$
\begin{equation*}
\| \gamma^{3(n+2)}  f \|_{L^{s}} \lesssim \| \gamma^2 \square f
\|_{L^p} + \|f\|_{L^2} .
\end{equation*}
\end{main}

These $L^p$ estimates are of course only a typical example of the
use of our analysis of the Neumann operator; it is possible to
obtain estimates in other norms (see \cite{Eh09c} or \cite{LiMi});
they will all invoke the $\gamma$ weights.

Much of the work on this paper was done while the first author was
 visiting the Max Planck Institute for Mathematics in Bonn, and
 special gratitude is extended to the Institute for their
 invitation.  Moreover, both authors had the opportunity to
 cooperate at the Erwin Schr\"{o}dinger Institute in Vienna: we
 extend our sincere thanks to the Institute.

\section{Geometric data}
\label{geo}

We need the following data, all of which are given by the defining
function, $r$:
\begin{enumerate}
\item $\rho^2(\zeta,z)$, (square of) the geodesic distance for the
metric (\ref{met}),
 \item
 \begin{equation*}
 P(\zeta,z)=\rho^2(\zeta,z)+2\frac{r(\zeta)}{\gamma(\zeta)}
  \frac{r(z)}{\gamma(z)},
 \end{equation*}
the extended (squared) distance function,
 \item $F(\zeta,z)$, the Ram\'{i}rez-Henkin function of the domain
 (resp. of $r$).  Its definition and properties can be
 found in \cite{LiMi} or \cite{Ra}.  Let us only note that it is
 holomorphic in $z$, smooth in both variables, and that it gives
 rise to
\item
\begin{equation*}
\Phi(\zeta,z)=F(\zeta,z)-r(\zeta),
\end{equation*}
the extended Ram\'{i}rez-Henkin function, which satisfies the
crucial estimates
\begin{align*}
&\mbox{Re }\Phi(\zeta,z)>0 \mbox{ on } \partial D\times D,\\
&\Phi(\zeta,\zeta)=0 \quad \mbox{for } \zeta\in \partial D,\\
&|\Phi(\zeta,z)| \gtrsim \rho^2(\zeta,z) + |r(\zeta)| + |r(z)| +
|\mbox{Im }\Phi (\zeta,z)|.
\end{align*}
It is from these data that all the following integral kernels will
be constructed.
\end{enumerate}

\section{Admissible operators}
\label{admis}

 We write $\xi_k(\zeta)$ for a function with the
 property
\begin{equation*}
 |\gamma^{\alpha}D^{\alpha}_{\zeta}
 \xi_k(\zeta)|\lesssim \gamma^k.
\end{equation*}

 We shall write
 $\lre_j$ for those double forms on open sets
$U\subset \mathbb{C}^n\times \mathbb{C}^n$ such that $\lre_{j}$ is
smooth and satisfies
\begin{equation*}
\lre_{j}(\zeta,z)\lesssim  \rho^j(\zeta,z),
\end{equation*}
where $\rho$ coincides with the geodesic distance.
 In many cases we work with similar forms which are not necessarily
 smooth up to the boundary, and for such forms we define $\sigma_j$, $j\ge0$,for those double forms which
 are smooth on
open sets $U\subset D\times D$ such that
\begin{align*}
|\sigma_j| &\lesssim \lre_j\\
D_{\zeta} \sigma_j&=\xi_0 \sigma_{j-1}\\
D_{z} \sigma_j&=\xi_0^{\ast} \sigma_{j-1}
.
\end{align*}
Here and below $\xi_k^{\ast}=\overline{\xi}_k(z)$, the $\ast$
having a similar meaning for other functions of one variable.

\begin{defn} A double differential form $\lra(\zeta,z)$ on
$\overline{D}\times\overline{D}$ is an
\textit{admissible} kernel, if it has the following properties:
\begin{enumerate}
\item[i)] $\lra$ is continuous on $ \overline{D}\times
\overline{D}-\Lambda$, where $\Lambda$ is the boundary diagonal,
and smooth except possibly at the singular points $(\zeta,z)$ with
$\gamma(\zeta)=0$ or $\gamma(z)=0$.
 \item[ii)] For each point $(\zeta_0,\zeta_0)\in \Lambda$ there is
 a neighborhood $U\times U$ of $(\zeta_0,\zeta_0)$ on which $\lra$ or $\overline{\lra}$
 has the representation
 \begin{equation}
 \label{typerep}
  \xi_N\xi_M^{\ast} \sigma_{j}
  P^{-t_0}\Phi^{t_1}\overline{\Phi}^{t_2}
   \Phi^{\ast t_3}\overline{\Phi}^{\ast t_4} r^l r^{\ast m}
 \end{equation}
with $N,M,j, t_0, \ldots, m$ integers and $j,
t_0, l, m \ge 0$,
 $-t=t_1+\cdots+t_4\le 0$, $N, M\ge
0$, $l+m\le t+1$, and $N+M\ge 0$.
\end{enumerate}
We define the \textit{type} of $\lra(\zeta,z)$ to be
\begin{equation*}
\tau=2n+j+\min\{2, t-l-m,N+M\} -2(t_0+t-l-m).
\end{equation*}
\end{defn}
The type controls the regularity properties of the operator: the
larger the type, the better the regularity - see Proposition
\ref{zmap}.  Type 0 is at the edge of integrability - see
\cite{EhLi}; here we only work with positive type kernels.

Double forms $\lre_{j-2n}$ will be called isotropic kernels of
type $j$. Operators with the corresponding kernels will be called
isotropic (resp. admissible) of the corresponding type.

An important example of a type 2 isotropic kernel is
\begin{equation*}
\Gamma_{0q}=\Gamma_{0q}(\zeta,z),
\end{equation*}
a parametrix of the complex Laplacian; its derivatives
$\mdbar_{\zeta} \Gamma_{0q}$ and $\vartheta_{\zeta} \Gamma_{0q}$
are of type 1.  They are part of the $\lrt_q$-kernels and the
corresponding ${\bf T}_q$ operators mentioned above and defined in
the next paragraph.

In the next theorem and throughout this paper we shall denote the
kernels of the operators ${\bf T}_q$ by $\lrt_q$; the adjoint
operator has the kernel $\lrt_q^{\ast}$ - see (\ref{adker}).

We will use $\lra_l$ for a generic admissible kernel of type $l$
and $\lre_{l-2n}$ for a generic isotropic kernel of type $l$. More
elaborate and more general concepts (the "$Z$-operators") will be
defined in Section \ref{zop}.

Our main theorems follow from the following result whose proof
takes up the bulk of the present paper:
\begin{thrm}
\label{nkern} There are explicit kernels, $\lrn_q$, $0\le q\le
n-1$,
 which satisfy
\begin{align*}
& \lrn_q=\lrn_q^{\ast}+\frac{1}{\gamma}\lra_{3}
+\frac{1}{\gamma^{\ast}}\lra_{3}\\
&
\mdbar_{\zeta}\lrn_q=\lrt_q+\frac{1}{\gamma\gamma^{\ast}}\lra_{2}
 \\
&\left. \ast_{\zeta} \lrn_q\right|_{\partial D}=0.
\end{align*}
For $1\le q\le n-1$ we have
\begin{equation*}
\lrn_q= \frac{1}{\gamma^{\ast}}\lra_{2} + \Gamma_{0q}.
\end{equation*}
Moreover, if $1\le q\le n-2$, we have
\begin{equation*}
 \mdbar^{\ast}_{\zeta}\lrn_q=\lrt_{q-1}^{\ast}+\frac{1}{\gamma\gamma^{\ast}}\lra_{2}
.
\end{equation*}
The kernels $\lra_{2}$ above satisfy
\begin{equation*}
 \mdbar_{\zeta} \lra_{2}=\lra_{1}+\frac{1}{\gamma}\lra_{2}.
\end{equation*}
\end{thrm}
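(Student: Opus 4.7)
The plan is to construct $\lrn_q$ explicitly as the isotropic parametrix $\Gamma_{0q}$ of the complex Laplacian corrected by a boundary term built from the geometric data of Section \ref{geo}, so that the $\mdbar$-Neumann boundary condition $\ast_{\zeta}\lrn_q|_{\partial D}=0$ holds exactly, while $\mdbar_{\zeta}\lrn_q$ and $\mdbar^{\ast}_{\zeta}\lrn_q$ recover the prescribed principal parts $\lrt_q$, $\lrt_{q-1}^{\ast}$ modulo admissible errors of better type. In the smoothly bounded case this is the classical Kerzman--Stein--Greiner--Ligocka construction surveyed in \cite{LiMi}; the novelty in the HL setting is the systematic insertion of factors $\gamma^{-1}$ and $(\gamma^{\ast})^{-1}$ needed to push the construction across the critical points of $r$, and these are exactly the $\gamma$-weights that appear in every error term of the statement.

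First I would write down the ansatz $\lrn_q=\Gamma_{0q}+(\gamma^{\ast})^{-1}\lra_2$, with the correction in the local form (\ref{typerep}), and fix the exponents $t_0,t_1,\dots,m$ by demanding $\ast_{\zeta}\lrn_q|_{\partial D}=0$. The boundary identities $\Phi(\zeta,\zeta)=0$ on $\partial D$ and the size estimate $|\Phi|\gtrsim\rho^{2}+|r|+|r^{\ast}|+|\mathrm{Im}\,\Phi|$ of Section \ref{geo} make this a finite-dimensional matching and determine the correction up to an admissible kernel of higher type; this already yields $\lrn_q=(\gamma^{\ast})^{-1}\lra_2+\Gamma_{0q}$ for $1\le q\le n-1$. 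Next I would differentiate the ansatz. The term $\mdbar_{\zeta}\Gamma_{0q}$ is type-$1$ isotropic and, by the discussion preceding the theorem, is one half of $\lrt_q$; applying $\mdbar_{\zeta}$ to the correction via the Leibniz rule and the identities $\mdbar_{\zeta}\Phi=-\mdbar r$ and $\mdbar_{\zeta}r^{\ast}=0$ produces a finite sum of admissible kernels whose leading terms reassemble into the remaining part of $\lrt_q$ and whose subprincipal terms fall into $(\gamma\gamma^{\ast})^{-1}\lra_2$. The extra $\gamma^{-1}$ comes from the differentiation rule $|\gamma^{\alpha}D^{\alpha}\xi_N|\lesssim\gamma^N$, which forces a loss of one power of $\gamma$ per derivative hitting $\xi_N$. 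The identity for $\mdbar^{\ast}_{\zeta}\lrn_q$ is then obtained by the same procedure using $\vartheta_{\zeta}\Gamma_{0q}$, which is the other half of $\lrt_{q-1}^{\ast}$. The symmetry $\lrn_q=\lrn_q^{\ast}+\gamma^{-1}\lra_3+(\gamma^{\ast})^{-1}\lra_3$ is then a direct computation: $\Gamma_{0q}$ is symmetric modulo a higher-type isotropic kernel, and the skew part of the boundary correction automatically gains one type unit because (\ref{adker}) swaps $\Phi\leftrightarrow\Phi^{\ast}$ and $r\leftrightarrow r^{\ast}$. The last displayed rule $\mdbar_{\zeta}\lra_2=\lra_1+\gamma^{-1}\lra_2$ is a pointwise consequence of the type calculus for $\xi_N$ and $\sigma_j$.

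The main obstacle is the \emph{simultaneous} matching of the three conditions: $\ast_{\zeta}\lrn_q|_{\partial D}=0$ must hold exactly, not merely modulo admissible, and each of $\mdbar_{\zeta}\lrn_q$, $\mdbar^{\ast}_{\zeta}\lrn_q$ must reproduce precisely the $\lrt$-kernel of Theorem \ref{lrethrm} at the principal level. Any slack in the exponents of (\ref{typerep}) destroys one of the two derivative identities, so the construction is essentially rigid. The heart of the argument is therefore the type bookkeeping in (\ref{typerep}) across every differentiation and the verification that the $\gamma$ losses produced near the critical points of $r$ are absorbed exactly by the $(\gamma\gamma^{\ast})^{-1}$ prefactor of the error terms; this is where most of the computational effort of the proof is spent.
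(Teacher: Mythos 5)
Your overall ansatz $\lrn_q=\Gamma_{0q}+\frac{1}{\gamma^{\ast}}\lra_2$ matches the shape of the theorem, but the mechanism you propose for determining the correction term does not work, and it inverts the logic of the actual argument. You claim that imposing $\left.\ast_{\zeta}\lrn_q\right|_{\partial D}=0$ is ``a finite-dimensional matching'' that determines the correction up to higher type, after which the identities for $\mdbar_{\zeta}\lrn_q$ and $\mdbar^{\ast}_{\zeta}\lrn_q$ follow by reassembly. The boundary condition is far too weak to do this: infinitely many admissible kernels of the form (\ref{typerep}) satisfy it while their $\mdbar_{\zeta}$-derivatives have nothing to do with $\lrt_q$. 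The entire content of the theorem is the solution of the kernel-level equation $\mdbar_{\zeta}\lrg_q=\lrt_q-\mdbar_{\zeta}\Gamma_{0q}+\frac{1}{\gamma\gamma^{\ast}}\lra_2$. The paper gets there by first putting the right-hand side into explicit form: starting from the decomposition (\ref{tqdef}), $\lrt_q=\vartheta_{\zeta}\lrl_q-\partial_z\lrl_{q-1}+\mdbar_{\zeta}\Gamma_{0q}$, it computes $\vartheta_{\zeta}\lrl_q-\partial_z\lrl_{q-1}=\sum\lrh_L\wedge\Theta^L$ in closed form (Lemma \ref{hterms}, via the derivative formulas $\lrm^{\mu}_{kj}$, $\widetilde{\lrm}^{\mu}_{kj}$ and Lemmas \ref{lphi}, \ref{lp}), and then \emph{exhibits} explicit primitives $\lrg_{nQ}$ and $\lrg_L$ in (\ref{gnq}) and (\ref{gn-1}) whose $\mdbar_{\zeta}$ reproduces $\lrh_L$ modulo $\frac{1}{\gamma\gamma^{\ast}}\lra_2$. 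Your proposal contains no analogue of this antidifferentiation step; ``the leading terms reassemble into the remaining part of $\lrt_q$'' is precisely the assertion to be proved, and it requires the specific powers of $\overline{\Phi}$ and $P$ and the combinatorial coefficients $c_{nq}\binom{n-\mu-2}{q}\frac{\mu+1}{n-\mu-2}$ chosen in (\ref{gn-1}).

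A second, independent gap: the identity $\mdbar^{\ast}_{\zeta}\lrn_q=\lrt_{q-1}^{\ast}+\frac{1}{\gamma\gamma^{\ast}}\lra_2$ is not obtained ``by the same procedure using $\vartheta_{\zeta}\Gamma_{0q}$.'' Once $\lrg_q$ is fixed by the $\mdbar_{\zeta}$-equation, one must verify the \emph{compatibility} statement $\vartheta_{\zeta}\lrg_q=\lrh_{q-1}^{\ast}+\frac{1}{\gamma\gamma^{\ast}}\lra_2$, i.e.\ (\ref{hq=dg}); that the same $\lrg_q$ solves both equations is a nontrivial fact (reflecting that $N$ intertwines $\mdbar$ and $\mdbar^{\ast}$) and occupies the second half of Section 5, including the delicate verification (\ref{verify}) needed to keep the errors at $\frac{1}{\gamma\gamma^{\ast}}\lra_2$ rather than worse. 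Finally, a small but symptomatic error: you use $\mdbar_{\zeta}\Phi=-\mdbar r$, but $\Phi=F-r$ with $F$ holomorphic in $z$, not in $\zeta$, so $\mdbar_{\zeta}F\neq 0$; the correct substitutes are the derivative formulas of Lemma \ref{lphi}, which are exactly what drive the computation you are trying to shortcut.
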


\section{Preliminary calculations}
\label{prelim}

In the next few lemmas we will often refer to a particular choice of local cooridinates.
We work in coordinate patch
near a boundary point of $D$ and define orthonormal frame of
$(1,0)$-forms on a neighborhood $U\cap D$ with
$\omega^1,\ldots,\omega^n$ where $\partial
r=\gamma\omega^n$ as the orthonormal frame,
and $L_1, \dots, L_n$ comprising the dual
frame. These operators refer to the variable $\zeta$.  When they
are to refer to the variable $z$, they will be denoted by
$\Theta^j$ and $\Lambda_j$, respectively.

We fix the point $\zeta$ and choose local coordinates
$z$ such that
\begin{equation}
\label{coorz}
 dz_j(\zeta)=\Theta_j(\zeta).
\end{equation}
 From the Morse Lemma,
near the critical points of $r$, denoted by $p_1,\ldots,p_k$, we can take
$\varepsilon$ small enough so that in each
\begin{equation*}
U_{2\varepsilon}(p_j)=\{\zeta: D\cap |\zeta-p_j|<2\varepsilon\},
\end{equation*}
for $j=1,\ldots,k$,
 there are coordinates $u_{j_1},\ldots,u_{j_m},v_{j_{m+1}},\ldots,v_{j_{2n}}$ such
 that
 \begin{equation}
 \label{rcoor}
 -r(\zeta)=u_{j_1}^2+\cdots+u_{j_m}^2-v_{j_{m+1}}^2-\cdots - v_{j_{2n}}^2,
 \end{equation}
with $u_{j_{\alpha}}(p_j)=v_{j_{\beta}}(p_j)=0$ for all $1\le
\alpha\le m$ and $m+1\le\beta\le 2n$.
Working in a neighborhood of a singularity in the boundary and
using such coordinates, we see
 first that for $j=1,\ldots,n$, $L_j$ is a sum
 of terms of the form $\xi_0\Lambda$, where $\Lambda$ here and below denotes any
 smooth first order differential
operator.  Similarly, $\Lambda_j$ is a sum
 of terms of the form $\xi_0^{\ast}\Lambda$.

Working now with the case $j=n$, the other cases being handled similarly, we see
  $\Lambda_n -\frac{\partial}{\partial z_n}$ is a sum of
  terms of the form
\begin{equation}
\label{derapp}
\left(\frac{a(z)}{\gamma(z)}-\frac{a(\zeta)}{\gamma(\zeta)}\right)
\Lambda =
(\xi_{-1}\xi_0^{\ast}\sigma_1)\Lambda,
\end{equation}
where $a$ is a smooth function such that $|a(\zeta)|\lesssim \gamma$.
 (\ref{derapp}) follows from
\begin{align*}
\frac{a(z)}{\gamma(z)}-\frac{a(\zeta)}{\gamma(\zeta)}&
 =a(z)\frac{\gamma(\zeta)-\gamma(z)}{\gamma(\zeta)\gamma(z)}+\frac{a(z)-a(\zeta)}{\gamma(\zeta)}\\
&=\xi_0^{\ast}\frac{\sigma_1}
 {\gamma(\zeta)}+\xi_{-1}\lre_1\\
&=\xi_{-1}\xi_0^{\ast}\sigma_1.
\end{align*}
We thus use repeatedly
\begin{equation}
\label{lamsym}
 \Lambda_j -\frac{\partial}{\partial z_j}=
 (\xi_{-1}\xi_0^{\ast}\sigma_1)\Lambda.
\end{equation}
By symmetry, we also have
\begin{equation*}
L_j -\frac{\partial}{\partial \zeta_j}=
 (\xi_{-1}\xi_0^{\ast}\sigma_1)\Lambda.
\end{equation*}

Coordinates taken as in (\ref{coorz}) give us the following
\begin{align*}
&\zeta_j=\xi_1\\
&\zeta_j-z_j=\sigma_1.
\end{align*}

We now collect various properties of functions comprising the integral kernels.
\begin{lemma}
 \label{phisymm}
\begin{align*}
&i.\ r(\zeta)=-\Phi(\zeta,z) + \lre_1\\
&ii.\ \Phi(\zeta,z)=\Phi^{\ast}(\zeta,z)+\lre_3.
\end{align*}
\end{lemma}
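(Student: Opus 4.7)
The plan is to unfold the definitions of $\Phi$ and $\Phi^{\ast}$ and reduce each identity to a Taylor expansion about the diagonal, using the standard local agreement between the Ram\'{\i}rez-Henkin function $F$ and the Levi polynomial of $r$.

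First I would dispatch (i): from $\Phi=F-r(\zeta)$ one obtains immediately
\begin{equation*}
 r(\zeta) = -\Phi(\zeta,z) + F(\zeta,z),
\end{equation*}
so it suffices to verify $F\in\lre_1$. By construction, $F$ is smooth on a neighborhood of $\overline{D}\times\overline{D}$ and satisfies $F(\zeta,\zeta)=0$ (the standard normalization inherited from the Levi polynomial), so the mean value theorem gives $|F(\zeta,z)|\lesssim|\zeta-z|$. Strict plurisubharmonicity of $r$ ensures the Levi metric has positive, bounded eigenvalues in a neighborhood of $\overline{D}$, so the geodesic distance is locally comparable to the Euclidean one, and $|F|\lesssim\rho$ follows. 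Combined with smoothness, this places $F$ in $\lre_1$.

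For (ii), a direct computation from $\Phi=F-r(\zeta)$ and $\Phi^{\ast}(\zeta,z)=\overline{\Phi(z,\zeta)}$ yields
\begin{equation*}
 \Phi(\zeta,z)-\Phi^{\ast}(\zeta,z) = F(\zeta,z) - \overline{F(z,\zeta)} + r(z) - r(\zeta),
\end{equation*}
so the task is to bound the right-hand side by $\rho^3$. I would replace $F$ by the Levi polynomial
\begin{equation*}
 L(\zeta,z) = \sum_j \tfrac{\partial r}{\partial\zeta_j}(\zeta)(\zeta_j - z_j) - \tfrac{1}{2}\sum_{j,k}\tfrac{\partial^2 r}{\partial\zeta_j\partial\zeta_k}(\zeta)(\zeta_j - z_j)(\zeta_k - z_k),
\end{equation*}
with which $F$ agrees near the diagonal to the order required by the construction in \cite{LiMi} or \cite{Ra}, and then verify the algebraic identity
\begin{equation*}
 L(\zeta,z) - \overline{L(z,\zeta)} = r(\zeta) - r(z) + O(|\zeta - z|^3)
\end{equation*}
by Taylor-expanding $r(z)$ around $\zeta$ through order two and the coefficients $r_{\bar j}(z)$ appearing in $\overline{L(z,\zeta)}$ to first order. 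The pure holomorphic, pure antiholomorphic, and mixed quadratic contributions then cancel term by term, leaving exactly a third-order remainder.

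The main obstacle will be part (ii): the bookkeeping in the Taylor expansions, and invoking the precise normalization of the Ram\'{\i}rez-Henkin function quoted from \cite{LiMi} carefully enough that the discrepancy $F-L$ does not spoil the third-order cancellation. Once these conventions are settled, the smoothness of the resulting expression---required by the definition of $\lre_j$---is automatic from the smoothness of $F$ and $r$, and no further work is needed.
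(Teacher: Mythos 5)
Your proposal is correct and takes essentially the same route as the paper: part (i) is read off from the definition $\Phi=F-r(\zeta)$ together with $F\in\lre_1$, and part (ii) is exactly the smooth-case computation (Taylor expansion of the Levi polynomial plus the normalization $F-L=O(|\zeta-z|^3)$ near the diagonal) that the paper simply cites from \cite{LiMi}. The cancellation you outline does go through, so nothing further is needed.
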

\begin{proof}
$i.$ follows as an immediate consequence of the definition of the function $\Phi$.
  $ii.$ follows as in the smooth case (see \cite{LiMi}).
\end{proof}

\begin{lemma}
 \label{lphi}
$i.$
\begin{align*}
\Lambda_n\Phi&=-\gamma+\xi_{0}\xi_0^{\ast}\sigma_1,\\
\overline{L}_n\Phi&=-\gamma^{\ast}
+\xi_{0}\xi_0^{\ast}\sigma_1.
\end{align*}
$ii.$ $\forall j$
\begin{align*}
&\overline{\Lambda}_j \Phi =\xi_0^{\ast}\lre_2,\\
&L_j\Phi=\xi_0\lre_2.
\end{align*}
$iii.$ For $j<n$
\begin{align*}
&\Lambda_j \Phi =\xi_0\xi_0^{\ast}\sigma_1,\\
&L_j\overline{\Phi}=\xi_{0}\xi_0^{\ast}\sigma_1.
\end{align*}
\end{lemma}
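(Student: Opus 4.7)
The plan is to prove each identity using the holomorphicity of $F(\zeta,z)$ in $z$, Lemma \ref{phisymm}, and the coordinate decomposition (\ref{lamsym}) (together with its $\zeta$-analogue) that replaces the orthonormal frame operators by standard coordinate derivatives modulo controlled errors.

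Part (ii) is the cleanest and follows from pure holomorphicity considerations. Since $F(\zeta,z)$ is holomorphic in $z$ and $r(\zeta)$ has no $z$-dependence, $\overline{\Lambda}_j\Phi \equiv 0$, which trivially lies in $\xi_0^{\ast}\lre_2$. For $L_j\Phi$, I would apply Lemma \ref{phisymm}(ii) to write $\Phi = \Phi^{\ast} + \lre_3$. Since $\Phi^{\ast}(\zeta,z) = \overline{F(z,\zeta)} - r(z)$ is antiholomorphic in $\zeta$ (because $F(z,\cdot)$ is holomorphic in its second argument), the $(1,0)$-derivative $L_j\Phi^{\ast}$ vanishes. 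Thus $L_j\Phi = L_j\lre_3$; expanding $L_j = \xi_0\Lambda$ and using that a single smooth derivative drops $\lre_3$ to $\lre_2$ yields $L_j\Phi = \xi_0\lre_2$.

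For part (i), the leading term $-\gamma$ is nontrivial. I would compute $\Lambda_n\Phi = \Lambda_n F$ (since $\Lambda_n r = 0$) and apply (\ref{lamsym}) to write $\Lambda_n F = \partial F/\partial z_n + (\xi_{-1}\xi_0^{\ast}\sigma_1)\Lambda F$. The Ram\'{i}rez-Henkin Taylor expansion $F(\zeta,z) = \sum_k(\partial r/\partial\zeta_k)(\zeta_k - z_k) + O(\rho^2)$, combined with the adapted coordinates (\ref{coorz}) in which $\partial r|_\zeta = \gamma\, dz_n$, gives $\partial F/\partial z_n|_{z=\zeta} = -\gamma$ with a smooth $O(\rho)$ remainder of the form $\xi_0\xi_0^{\ast}\sigma_1$. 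For $\overline{L}_n\Phi$, the parallel calculation uses $\overline{L}_n r = \gamma$ (from $\partial r = \gamma\omega^n$) and the fact that the $(0,1)$-derivatives of $F$ in $\zeta$ vanish at the diagonal (the leading Ram\'{i}rez-Henkin term is holomorphic in $\zeta_k - z_k$), producing $\overline{L}_n\Phi = -\gamma + \xi_0\xi_0^{\ast}\sigma_1$; the estimate $\gamma - \gamma^{\ast} = \sigma_1$ then converts this to the stated form.

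Part (iii) is parallel to (i) for $j<n$. In the adapted coordinates $\partial r/\partial\zeta_j|_\zeta = 0$ (since $\omega^j$ is orthogonal to $\partial r = \gamma\omega^n$), so the leading term is absent and only the error $\xi_0\xi_0^{\ast}\sigma_1$ survives. The identity for $L_j\overline{\Phi}$ follows by the analogous argument after using Lemma \ref{phisymm}(ii) to swap the roles of $\zeta$ and $z$. The main obstacle throughout is the accurate bookkeeping of the $\xi_k\xi_l^{\ast}\sigma_m$ classes: (\ref{lamsym}) introduces the a priori singular factor $\xi_{-1}$, and at each step one must verify that it pairs with a compensating factor of $\gamma$ coming either from the explicit leading coefficient of the Ram\'{i}rez-Henkin expansion or from the vanishing of first derivatives of $F$ at the diagonal, so that the final errors land in the claimed classes. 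This is a tedious coordinate-level check but requires no new ideas beyond those already present in Lemma \ref{phisymm} and the preliminary observations surrounding (\ref{lamsym}).
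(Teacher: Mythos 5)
Your proposal is correct and follows essentially the same route as the paper: the Levi-polynomial expansion of $F$ in the adapted coordinates, the frame-versus-coordinate relation (\ref{lamsym}) with the singular $\xi_{-1}$ factors absorbed by $\partial r/\partial\zeta_k=\xi_1$ (vanishing for $k<n$ in part $iii.$), exact annihilation of holomorphic data by $(0,1)$-fields for part $ii.$, and Lemma \ref{phisymm} to pass between $\Phi$ and $\Phi^{\ast}$. The only cosmetic difference is that you obtain $\overline{L}_n\Phi$ by a direct parallel computation using $\overline{L}_n r=\gamma$ and $\gamma-\gamma^{\ast}=\sigma_1$, whereas the paper conjugates the first identity of $i.$ and invokes Lemma \ref{phisymm}; the two are interchangeable.
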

\begin{proof}
$i.$
 \begin{align*}
\Lambda_n \Phi =& \sum_{j=1}^n\frac{\partial r}{\partial \zeta_j}
 \Lambda_n(\zeta_j-z_j)+\xi_0^{\ast}\lre_{1}\\
=&\sum_{j<n}\frac{\partial r}{\partial \zeta_j}
 (\xi_{-1}\xi_0^{\ast}\sigma_1)\Lambda(\zeta_j-z_j)
  +\xi_0^{\ast}\lre_1
  \\
&+
\frac{\partial r}{\partial\zeta_n}
(-1+(\xi_{-1}\xi_0^{\ast}\sigma_1)\Lambda(\zeta_n-z_n))\\
=&-\gamma+\xi_{0}\xi_0^{\ast}\sigma_1.
\end{align*}

The second relation in $i.$ follows by taking conjugates, and by
Lemma \ref{phisymm}.\\
 $ii.$
 That $\overline{\Lambda}_j\Phi = \xi_0^{\ast}\lre_2$ is clear.
 $L_j\Phi=\xi_0\lre_2$ then follows by taking the adjoint and
using the fact that $\Phi -\Phi^{\ast}=\lre_3$. \\
$iii.$ As we wrote in the proof of $i$, we write
\begin{align*}
\Lambda_j \Phi &= \sum_{k=1}^n\frac{\partial r}{\partial \zeta_k}
 \Lambda_j(\zeta_k-z_k)+\xi_0^{\ast}\lre_{1}\\
=&\sum_{k\ne j}\frac{\partial r}{\partial \zeta_k}
 (\xi_{-1}\xi_0^{\ast}\sigma_1)\Lambda(\zeta_k-z_k)
 +\xi_0^{\ast}\lre_{1}\\
&+
\frac{\partial r}{\partial\zeta_j}
(-1+(\xi_{-1}\xi_0^{\ast}\sigma_1)\Lambda(\zeta_j-z_j))\\
=&\xi_{0}\xi_0^{\ast}\sigma_1.
\end{align*}
$L_j\overline{\Phi}=\xi_{0}\xi_0^{\ast}\sigma_1$ follows similarly.
\end{proof}

\begin{lemma}
 \label{lp}
$i.$
\begin{align*}
 \gamma\Lambda_n P=&
 -2 \overline{\Phi}
  +
 \xi_{1}\xi_{-1}^{\ast}(P+\lre_2)+\xi_0\xi_0^{\ast}\sigma_2\\
 \gamma^{\ast}L_n P=&
 -2 \Phi^{\ast}
  +\xi_{-1}\xi_1^{\ast}(P+\lre_2)+\xi_0\xi_0^{\ast}\sigma_2.
\end{align*}
$ii.$  For $j<n$
\begin{align*}
L_j\overline{\Phi}=&L_j\rho^2+\xi_{-1}\xi_0^{\ast}\sigma_2\\
=&L_jP+\xi_{0}\xi_{-1}^{\ast}r^{\ast}+\xi_{-1}\xi_{0}^{\ast}\sigma_2.
\end{align*}
$iii.$
\begin{align*}
\gamma\gamma^{\ast}\left(2P-\sum_{j<n}|L_j\rho^2|^2\right)=&
 4|\Phi|^2+r\xi_0\xi_0^{\ast}\sigma_2+\xi_0\xi_1^{\ast}\sigma_3
 +\xi_0\xi_0^{\ast}\sigma_4.
\end{align*}
\end{lemma}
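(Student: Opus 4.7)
The overall strategy is to transplant the classical smooth-boundary identities from \cite{LiMi} to the non-smooth HL setting, tracking each deviation from the smooth case through the symbol classes $\xi_k$, $\sigma_j$, and $\lre_j$. The main inputs are the operator comparison (\ref{lamsym}), the expansions in Lemma \ref{phisymm}, and above all Lemma \ref{lphi}, which already encodes how $L_j$ and $\Lambda_j$ hit $\Phi$ and $\overline{\Phi}$ near the diagonal. Throughout I would use the decomposition $P = \rho^2 + 2 r r^{\ast}/(\gamma\gamma^{\ast})$ and split each derivative accordingly.

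For part (i), I would apply $\gamma\Lambda_n$ to each summand of $P$. On $\rho^2$ the smooth-case computation reproduces $-2\overline{\Phi}$ to leading order, with the non-smooth correction concentrated in the operator gap (\ref{lamsym}) and contributing the $\xi_0\xi_0^{\ast}\sigma_2$ error. On the second summand, $r/\gamma = \xi_1$ and $\Lambda_n(r^{\ast}/\gamma^{\ast}) = \xi_{-1}^{\ast}$, which produces the $\xi_1\xi_{-1}^{\ast}$ factor; the companion $P+\lre_2$ reflects that $2rr^{\ast}/(\gamma\gamma^{\ast})$ is itself one of the summands of $P$. The conjugate identity follows by the $\zeta\leftrightarrow z$ symmetry of Lemma \ref{phisymm} ii. For part (ii), I would apply $L_j$ to $\overline{\Phi}-\rho^2 = \overline{F} - r - \rho^2$: since $\overline{F}$ is antiholomorphic in $z$, the principal $z$-derivative drops out, and Lemma \ref{lphi} iii together with the antiholomorphic vanishing upgrades the error from $\sigma_1$ to $\sigma_2$. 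The second line of (ii) follows by relating $L_j\rho^2$ to $L_jP$ via the $2rr^{\ast}/(\gamma\gamma^{\ast})$ correction, using $L_j(r/\gamma)=\xi_0$.

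Part (iii) is the heart of the lemma. The key geometric input is the Gauss lemma for the Riemannian geodesic distance, $\sum_{j=1}^n |L_j\rho^2|^2 = 2\rho^2$, which reduces the left-hand side to
\begin{equation*}
\gamma\gamma^{\ast}\bigl(2P - \textstyle\sum_{j<n}|L_j\rho^2|^2\bigr) = 4 r r^{\ast} + \gamma\gamma^{\ast}|L_n\rho^2|^2.
\end{equation*}
I would then use part (i) to substitute $\gamma^{\ast}L_nP = -2\Phi^{\ast} + \xi_{-1}\xi_1^{\ast}(P+\lre_2) + \xi_0\xi_0^{\ast}\sigma_2$, convert $L_n\rho^2$ to $L_nP$ via the $rr^{\ast}/(\gamma\gamma^{\ast})$ correction, square, and rewrite $|\Phi^{\ast}|^2 = |\Phi|^2 + \lre_3\cdot\overline{\Phi}$ using Lemma \ref{phisymm} ii. The asymmetry between $\gamma\gamma^{\ast}|L_nP|^2$ and $|\gamma^{\ast}L_nP|^2$ is absorbed by $\gamma/\gamma^{\ast} = 1 + \xi_{-1}^{\ast}\sigma_1$.

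The main obstacle is the bookkeeping in part (iii): one must verify that every cross term produced by the squaring, by the $\gamma/\gamma^{\ast}$ correction, and by the conversion between $L_n\rho^2$ and $L_nP$ falls into exactly one of the posited error classes $r\xi_0\xi_0^{\ast}\sigma_2$, $\xi_0\xi_1^{\ast}\sigma_3$, or $\xi_0\xi_0^{\ast}\sigma_4$, with the correct powers of $\gamma$ and $r$. Parts (i) and (ii) are, by comparison, mostly direct unwinding of the smooth-case formulas and the preceding Lemma \ref{lphi}.
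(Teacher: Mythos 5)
Your overall strategy for parts (i) and (ii) matches the paper's, but your derivation of part (iii) from part (i) has a genuine gap. The paper does \emph{not} obtain (iii) by squaring the weighted identity $\gamma^{\ast}L_nP=-2\Phi^{\ast}+\cdots$; it computes $|L_j\rho^2|^2=4|\zeta_j-z_j|^2+\xi_{-1}\xi_0^{\ast}\sigma_3$ and the product $\Phi\overline{\Phi}$ directly in the adapted coordinates, precisely so that no division by $\gamma$ or $\gamma^{\ast}$ ever occurs. Your route forces such a division: to pass from $|\gamma^{\ast}L_nP|^2$ to $\gamma\gamma^{\ast}|L_n\rho^2|^2$ you multiply by $\gamma/\gamma^{\ast}=1+\xi_{-1}^{\ast}\sigma_1$, and the correction term $\sigma_1\gamma^{\ast}|L_nP|^2$ contains, via $(r^{\ast}/\gamma^{\ast})^2=P+\lre_2+\sigma_1 r/\gamma$, contributions of the form $\sigma_1 rr^{\ast}/\gamma$ and $\sigma_1\gamma^{\ast}P$. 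These carry negative powers of $\gamma$ (recall $r/\gamma$ behaves like $\sqrt{P}$, not like $\gamma$) and do not reduce to the asserted error classes $r\xi_0\xi_0^{\ast}\sigma_2+\xi_0\xi_1^{\ast}\sigma_3+\xi_0\xi_0^{\ast}\sigma_4$, which contain no $\gamma^{-1}$ at all. The same problem appears in the cross terms of the square, where $\Phi^{\ast}$ meets $\xi_{-1}\xi_1^{\ast}(P+\lre_2)$.

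The idea your sketch is missing is the exact cancellation that makes the paper's version of (iii) work: after writing $\Phi\overline{\Phi}=\gamma\gamma^{\ast}|\zeta_n-z_n|^2-r[\gamma(\zeta_n-z_n)+\overline{\Phi}]+\cdots$, one must use the \emph{two-sided} expansion of $\Phi$ (Lemma \ref{phisymm} ii, i.e.\ $\Phi=\Phi^{\ast}+\lre_3$, which is the first-order Taylor expansion of $r(z)$ about $\zeta$) to get $\gamma(\zeta_n-z_n)+\overline{\Phi}=-r(z)+\xi_0\xi_0^{\ast}\sigma_2$. This is what converts the dangerous bilinear term $r\cdot\gamma\,\mathrm{Re}(\zeta_n-z_n)$, which by itself is only $r\xi_1\sigma_1$ and lies outside the error classes, into $rr^{\ast}+r\xi_0\xi_0^{\ast}\sigma_2$. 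You cite Lemma \ref{phisymm} ii, but only to compare $|\Phi|^2$ with $|\Phi^{\ast}|^2$, which is not where it is needed. Two smaller points: in (i), the $-2\overline{\Phi}$ does not come from $\gamma\Lambda_n\rho^2$ alone (that gives $-2\overline{\Phi}-2r$ plus errors); the $+2r$ needed to complete $\overline{\Phi}$ comes from $\gamma\cdot 2(r/\gamma)\Lambda_n(r^{\ast}/\gamma^{\ast})$, whose leading part is $2r$, not merely the $\xi_1\xi_{-1}^{\ast}$ error you assign to it. And in (ii), the mechanism is not "antiholomorphic vanishing in $z$" ($L_j$ differentiates in $\zeta$): it is that $L_j\overline{F}$ produces the Hessian $r_{j\bar k}(\overline{\zeta}_k-\overline{z}_k)$, which equals $\tfrac12\partial_{\zeta_j}R^2$ exactly because the metric was chosen to be the Levi metric of $r$, together with $L_jr=0$ for $j<n$.
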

\begin{proof}
Variants of $i.$ and $iii.$ were proved in \cite{Eh10}, and we will follow those proofs here.  \\
$i.$  We prove the first relation in $i.$, the second being a consequence of the first.
We have
\begin{equation*}
\Lambda_n P=\Lambda_n \rho^2
 +2\frac{r}{\gamma}-
  \frac{1}{\gamma^{\ast}}\xi_{0}(z)\frac{r r^{\ast}}{\gamma\gamma^{\ast}}
  .
\end{equation*}

With $\zeta$ fixed, we choose coordinates $z_j$, as in
(\ref{coorz}), so that
 $dz_j(\zeta)=\theta_j(\zeta)$, and we let
\begin{equation*}
R^2(\zeta,z)=\sum g_{jk}(\zeta)(\zeta_j-z_j)(\overline{\zeta}_k-\overline{z}_k),
\end{equation*}
where the $g_{jk}$ are determined by the metric, $ds^2=\sum g_{jk}d z_j d\overline{z}_k$.

With the metric chosen as the Levi metric, we write
\begin{equation*}
 g_{jk}(\zeta)=g_{jk}(z)+\sigma_1.
\end{equation*}
This gives us the relation
\begin{equation*}
\rho^2=R^2+\sigma_3,
\end{equation*}
and thus
\begin{align*}
\Lambda_n \rho^2&=\frac{\partial}{\partial z_n}R^2 +
(\xi_{-1}\xi_0^{\ast}\sigma_1)(\Lambda R^2)+\xi_0^{\ast}\sigma_2\\
&=\frac{\partial}{\partial z_n}R^2 +
(\xi_{-1}\xi_0^{\ast}\sigma_1)(\xi_0^{\ast}\sigma_1)+\xi_0^{\ast}\sigma_2\\
&=-2(\overline{\zeta}_n-\overline{z}_n)
+ \xi_{-1}\xi_0^{\ast}\sigma_2
,
\end{align*}
where the last line follows from $g_{jk}(\zeta)=2\delta_{jk}$ due
to the orthonormality of the $\Theta_j$.

Finally, this gives
\begin{align}
\nonumber
 \Lambda_n
P&=-2(\overline{\zeta}_n-\overline{z}_n)
 +2\frac{r}{\gamma}+\xi_{-1}^{\ast}\frac{rr^{\ast}}{\gamma\gamma^{\ast}}+ \xi_{-1}\xi_0^{\ast}\sigma_2\\
 \label{lnp}
 &=-2(\overline{\zeta}_n-\overline{z}_n)
 +2\frac{r}{\gamma}+\xi_{-1}^{\ast}(P+\lre_2)+ \xi_{-1}\xi_0^{\ast}\sigma_2
 ,
\end{align}
where we use
\begin{equation*}
\frac{rr^{\ast}}{\gamma\gamma^{\ast}}=P+\lre_2
\end{equation*}
in the last line.

We compare (\ref{lnp}) to $\overline{\Phi}$ by
calculating the Levi polynomial, $F(\zeta,z)$ in the
above coordinates:
\begin{align}
\nonumber
 \overline{\Phi}(\zeta,z)
 &=\overline{F}(\zeta,z) - r(\zeta) +\sigma_{2}\\
\label{phicoor}
 &=\gamma(\zeta)(\overline{\zeta}_n-\overline{z}_n)-r(\zeta)
  +\xi_{0}\xi_0^{\ast}\sigma_2.
\end{align}

$ii.$
Again we use
 \begin{equation*}
 \rho^2=R^2+\sigma_3
 \end{equation*}
 below to obtain
 \begin{align*}
L_j\overline{\Phi}=&
 \sum_k \left(L_j\frac{\partial r}{\partial\overline{\zeta}_k}\right) (\overline{\zeta_k-z_k})+\xi_0\sigma_2\\
=& \sum_k \left[ \left(\frac{\partial}{\partial \zeta_j}+
(\xi_{-1}\xi_0^{\ast}\sigma_1)\Lambda\right)
 \frac{\partial r}{\partial\overline{\zeta}_k}\right] (\overline{\zeta_k-z_k}) +\xi_0\sigma_2\\
=& \sum_k\frac{\partial^2r}{\partial\zeta_j\overline{\zeta}_k}(\overline{\zeta_k-z_k})
+\xi_{-1}\xi_0^{\ast}\sigma_2 \\
=&\frac{\partial}{\partial \zeta_j}
 \left(\sum_{k,l}\frac{\partial^2r}{\partial\zeta_l\overline{\zeta}_k}(\zeta_l-z_l)(\overline{\zeta_k-z_k})
 \right)
+\xi_{-1}\xi_0^{\ast}\sigma_2\\
=& \frac{\partial}{\partial \zeta_j}
 \left(\rho^2+\sigma_3\right) +\xi_{-1}\xi_0^{\ast}\sigma_2\\
=& \left(L_j+(\xi_{-1}\xi_0^{\ast}\sigma_1)\Lambda
\right)\rho^2+\xi_{-1}\xi_0^{\ast}\sigma_2 \\
=&L_j\rho^2+\xi_{-1}\xi_0^{\ast}\sigma_2.
\end{align*}
We now use the relation
 \begin{align*}
 L_jP=&L_j\rho^2+\xi_{-1}\frac{rr^{\ast}}{\gamma\gamma^{\ast}}\\
=&L_j\rho^2+\xi_{0}\xi_{-1}^{\ast}r^{\ast}+\xi_{-1}\xi_{0}^{\ast}\sigma_2
\end{align*}
to
 finish the proof of $ii.$
\\
$iii.$
We have
\begin{align*}
|L_j\rho^2|^2 =&
 (L_j\rho^2)\left(\frac{\partial}{\partial \overline{\zeta}_j}+(
\xi_{-1}\xi_0^{\ast}\sigma_1)\Lambda\right)\rho^2\\
=&(L_j\rho^2)\frac{\partial\rho^2}{\partial \overline{\zeta}_j}
 +\xi_{-1}\xi_0^{\ast}\sigma_3\\
=&\left| \frac{\partial}{\partial
\zeta_j}\rho^2
\right|^2 +\xi_{-1}\xi_0^{\ast}\sigma_3\\
 =&4|\zeta_j-z_j|^2+\xi_{-1}\xi_0^{\ast}\sigma_3\\
=&4|\zeta_j-z_j|^2
+\xi_{-1}\xi_0^{\ast}\sigma_3
.
\end{align*}
We can then, with the relation
\begin{equation*}
P=2\sum_{j}|\zeta_j-z_j|^2+\sigma_3+2\frac{rr^{\ast}}{\gamma\gamma^{\ast}},
\end{equation*}
 write
\begin{equation*}
2P -\sum_{j<n} |L_j\rho^2|^2=
 4|\zeta_n-z_n|^2+4\frac{rr^{\ast}}{\gamma\gamma^{\ast}}
+\xi_{-1}\xi_0^{\ast}\sigma_3.
\end{equation*}

The calculations in (\ref{phicoor})
also give
\begin{equation*}
\Phi=\gamma(\zeta_n-z_n)-r+\xi_{0}\xi_0^{\ast}\sigma_2,
\end{equation*}
which we use in
\begin{align*}
\Phi\overline{\Phi}
 =&(\gamma(\zeta_n-z_n)-r(\zeta)+\xi_{0}\xi_0^{\ast}\sigma_2)\overline{\Phi}\\
 =&\gamma(\zeta_n-z_n)[\gamma(\overline{\zeta}_n-\overline{z}_n)
  -r(\zeta)+\xi_{0}\xi_0^{\ast}\sigma_2]-r(\zeta)\overline{\Phi}+
  (\xi_{0}\xi_0^{\ast}\sigma_2)\overline{\Phi}\\
 =&\gamma\gamma^{\ast}|\zeta_n-z_n|^2 -
 r(\zeta)[\gamma(\zeta_n-z_n)+\overline{\Phi}]+r(\zeta)\xi_0\xi_0^{\ast}\sigma_2+
 \xi_{1}\xi_0^{\ast}\sigma_3+\xi_{0}\xi_0^{\ast}\sigma_4,
\end{align*}
where we use $\gamma(\zeta)=\gamma(z)+\sigma_1$ and
$\Phi=\xi_1\sigma_1+\sigma_2-r$ in the last
step.

From Lemma \ref{phisymm} we have
\begin{align*}
\gamma(\zeta_n-z_n)+\overline{\Phi}
 &=\gamma(\zeta_n-z_n)+\overline{\Phi}^{\ast}
 +\lre_3\\
&=\gamma(\zeta_n-z_n)+\gamma^{\ast}(z_n-\zeta_n)-r(z)+\xi_0\xi_0^{\ast}\sigma_2\\
&=-r(z)+\xi_{0}\xi_0^{\ast}\sigma_2,
\end{align*}
and so we can write
\begin{equation*}
\Phi\overline{\Phi} =
\gamma\gamma^{\ast}|\zeta_n-z_n|^2
+rr^{\ast}+r\xi_0\xi_0^{\ast}\sigma_2+
\xi_1\xi_0^{\ast}\sigma_3+
\xi_0\xi_0^{\ast}\sigma_4
 .
\end{equation*}
iii. now easily follows.

\end{proof}

We want to compute the principal parts of the kernels $\lrt_q$
occurring in the integral representation \ref{lrethrm}. From
\cite{Eh10} we have
\begin{align}
\label{tqdef} &
\lrt_q^{}=\vartheta_{\zeta}\lrl_q^{}-\partial_z\lrl_{q-1}^{}+
\overline{\partial}_{\zeta}\Gamma_{0q}^{},
 \qquad q\ge 1\\
 \nonumber
& \lrt_0^{}= \vartheta_{\zeta}\lrl_0^{} -\ast\overline{K}_0^{}
 +\overline{\partial}_{\zeta}\Gamma_{00}^{},
\end{align}
where the various kernels are defined below.

 We start with the differential forms
\begin{align*}
&\beta(\zeta,z)=
 \frac{\partial_{\zeta}\rho^2(\zeta,z)}{\rho^2(\zeta,z)}\\
&\alpha_{}(\zeta,z)=\xi(\zeta)\frac{\partial
r_{}(\zeta)}{\phi_{}(\zeta,z)},
\end{align*}
where $\xi(\zeta)$ is a smooth patching function which is
equivalently 1 for $|r(\zeta)|<\delta$ and 0 for
$|r(\zeta)|>\frac{3}{2} \delta$, and $\delta>0$ is sufficiently
small.  We define
\begin{equation*}
C_q^{}=C_q(\alpha_{},\beta)
 = \sum_{\mu=0}^{n-q-2}\sum_{\nu=0}^{q}
 a_{q\mu\nu}C_{q\mu\nu}(\alpha_{},\beta),
\end{equation*}
where
\begin{equation*}
a_{q\mu\nu}=\left(\frac{1}{2\pi
i}\right)^n\binom{\mu+\nu}{\mu}\binom{n-2-\mu-\nu}{q-\mu}
\end{equation*}
and
\begin{equation*}
C_{q\mu\nu}(\alpha_{},\beta)
 = \alpha_{}\wedge
 \beta\wedge(\mdbar_{\zeta}\alpha_{})^{\mu}\wedge
 (\mdbar_{\zeta}\beta)^{n-q-\mu-2}\wedge(\mdbar_z\alpha_{})^{\nu}
 \wedge(\mdbar_z\beta)^{q-\nu}.
\end{equation*}
Denoting the Hodge $\ast$-operator by $\ast$, we then define
\begin{equation}
\label{lqc}
\lrl_q^{}(\zeta,z)=(-1)^{q+1}\ast_{\zeta}\overline{C_q^{}(\zeta,z)}.
\end{equation}
We also write
\begin{equation*}
K_q^{}(\zeta,z) = (-1)^{q(q-1)/2}\binom{n-1}{q}\frac{1}{(2\pi
i)^n}\alpha_{}\wedge(\mdbar_{\zeta}\alpha_{})^{n-q-1}
\wedge(\mdbar_z\alpha_{})^q
\end{equation*}
and
\begin{equation*}
\Gamma_{0,q}^{}(\zeta,z)
 =\frac{(n-2)!}{2\pi^n}\frac{1}{\rho^{2n-2}}\left(
 \mdbar_{\zeta}\partial_z\rho^2\right)^q.
\end{equation*}

For ease of notation we will drop here the superscripts
$\epsilon$, which were used in \cite{Eh10} to do calculations on
the smooth subdomains, $D_{\epsilon}=\{ r<-\epsilon \}$ noting
that the following calculations also hold when the kernels on
$D\times D$ are replaced with the corresponding kernels on
$D_{\epsilon}\times D_{\epsilon}$.  All formulas remain the same
and make sense when one looks at the appropriate weighted $L^p$
spaces.

\begin{lemma}
\label{lemmalq}
 The kernels $\lrl_q$ given in (\ref{lqc}) can be represented, for
$0\le q\le n-2$, in the following form:
\begin{equation}
\label{lq}
 \lrl_q= c_{nq}\sum_{{{0\le\mu\le n-q-2}\atop{j<n}}\atop{|L|=q}}
 {n-2-\mu \choose q} \frac{\overline{L}_j\rho^2}{\overline{\Phi}^{\mu+1}P^{n-\mu-1}}
\gamma
\overline{\omega}^{njL}\wedge\Theta^L + \lra_{3},
\end{equation}
where
\begin{equation*}
c_{nq}=2^{n-2}\left(\frac{1}{2\pi}\right)^n q!(n-q-2)!
\end{equation*}
and the terms $\lra_{3}$ satisfy
\begin{equation*}
\vartheta_{\zeta} \lra_{3}=\lra_{2},\
\partial_z\lra_{3}=\lra_{2},
\end{equation*}
\begin{equation*}
 \mdbar_{\zeta}\vartheta_{\zeta}\lra_{3}=\lra_{1}+\frac{1}{\gamma}\lra_2
 ,\
\mdbar_{\zeta}\partial_z\lra_{3}= \lra_{1}.
\end{equation*}
Alternatively, we can use
\begin{equation*}
 \mdbar_{\zeta}\vartheta_{\zeta}
 \lra_{3}=\lra_{1}+\frac{1}{\gamma^{\ast}}\lra_2
 .
\end{equation*}
\end{lemma}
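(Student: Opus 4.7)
My plan is to expand $C_q$ in the chosen orthonormal frame, apply $\ast_\zeta$ and complex conjugation to obtain $\lrl_q$, and then sort the resulting monomials into a principal part and an $\lra_3$ remainder using the preliminary calculations of Lemmas \ref{phisymm}--\ref{lp}. First I would split each of $\mdbar_\zeta \alpha$, $\mdbar_\zeta \beta$, $\mdbar_z \alpha$, $\mdbar_z \beta$ into a more singular and a more regular piece; for instance,
\begin{equation*}
\mdbar_\zeta \alpha = \xi \frac{\mdbar_\zeta \partial r}{\Phi} - \xi \frac{\partial r \wedge \mdbar_\zeta \Phi}{\Phi^2} + \frac{(\mdbar_\zeta \xi)\,\partial r}{\Phi}.
\end{equation*}
Substituting into $C_{q\mu\nu}$ and expanding, only the monomial that retains $\partial r \wedge \mdbar_\zeta \Phi/\Phi^2$ from each copy of $\mdbar_\zeta \alpha$ and $\partial_\zeta \rho^2 \wedge \mdbar_\zeta \rho^2/\rho^4$ from each copy of $\mdbar_\zeta \beta$ is singular enough to contribute to the principal part; every other cross-term gains enough regularity to be absorbed into $\lra_3$.

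Next, since $\partial r = \gamma \omega^n$, the factor $\alpha$ and the $\mu$ singular pieces of $\mdbar_\zeta \alpha$ each carry an $\omega^n$, so all but one of them vanish by antisymmetry; the surviving $\omega^n$ pairs with a leading $\gamma^\ast \overline{\omega}^n$ from the second formula of Lemma \ref{lphi}(i). The $n-q-\mu-2$ copies of $\mdbar_\zeta \beta$ contribute the tangential derivatives $\overline{L}_j \rho^2$ (for $j<n$) through $\partial_\zeta \rho^2 \wedge \mdbar_\zeta \rho^2$, while the $\mdbar_z$-factors produce the $\Theta^L$-slot on the $z$-side. After $\ast_\zeta$ and complex conjugation, the surviving $\zeta$-form collects as $\gamma\,\overline{\omega}^{njL}$. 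The binomial coefficient $\binom{n-2-\mu}{q}$ then emerges by summing $a_{q\mu\nu}$ in $\nu$ via a Vandermonde-type identity. Finally the denominator $\rho^{2(n-\mu-1)}$ produced by the powers of $\beta$ is converted to $P^{n-\mu-1}$ by writing $\rho^2 = P - 2 rr^\ast/(\gamma \gamma^\ast) + \lre_3$, each correction carrying a factor of $r$ or $r^\ast$ that is absorbed into $\lra_3$ through the admissibility constraint $l+m \le t+1$.

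The last step is to verify the derivative relations claimed for $\lra_3$. The bounds $\vartheta_\zeta \lra_3 = \lra_2$ and $\partial_z \lra_3 = \lra_2$ follow at once, since one derivative lowers the type by one and derivatives of $1/\Phi$, $1/\overline{\Phi}$, $1/P$ are controlled by Lemmas \ref{lphi} and \ref{lp}. The mixed identity $\mdbar_\zeta \vartheta_\zeta \lra_3 = \lra_1 + \frac{1}{\gamma}\lra_2$ requires more care: the $1/\gamma$ piece is produced precisely when a normal $\zeta$-derivative hits a $\Phi$ factor, where Lemma \ref{lphi}(i) gives the leading $-\gamma + \xi_0\xi_0^\ast \sigma_1$ that, after combining with the $1/\gamma$ arising from the orthonormal frame normalization, yields the required factor. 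The alternative form $\mdbar_\zeta \vartheta_\zeta \lra_3 = \lra_1 + \frac{1}{\gamma^\ast}\lra_2$ follows from $\Phi = \Phi^\ast + \lre_3$ (Lemma \ref{phisymm}(ii)), which swaps the roles of $\zeta$ and $z$ at the cost of an $\lre_3$ correction. The main obstacle and the heart of the argument is the combinatorial bookkeeping: one must track every monomial produced by the wedge expansion and verify that each transition ($\rho^2 \to P$, $\Phi \to \Phi^\ast$, singular piece to regular piece) yields a remainder that is genuinely admissible of type at least $3$ and satisfies all four of the stated derivative identities.
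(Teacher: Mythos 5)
There is a genuine gap, and it sits exactly where the paper's proof does its real work. The representation (\ref{lq}) itself is not re-derived in this paper --- it is quoted from \cite{Eh10} together with the \emph{explicit form of the remainder}, namely a sum of terms $\lre_2\wedge\mdbar r\,/\,(\overline{\Phi}^{\mu+1}P^{n-\mu-1})$ plus an $\lre_0$. Your expansion of $C_{q\mu\nu}$ is a plausible sketch of how one would reconstruct the leading term, but it never pins down this explicit structure of the error, and without it the second half of the lemma cannot be proved. The reason is that your guiding principle --- ``one derivative lowers the type by one'' --- is false for a generic admissible kernel: differentiating $P^{-(n-\mu-1)}$ produces $\mdbar_{\zeta}P=\lre_1+\xi_0 r^{\ast}/\gamma^{\ast}$, and the term $r^{\ast}/\gamma^{\ast}$ introduces an inverse power of $\gamma^{\ast}$ that is not permitted in an admissible kernel at all. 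Controlling it requires (i) the wedge factor $\mdbar r$ in the numerator (so that $\mdbar\vartheta\mdbar r=0$ kills one dangerous term), and (ii) the identities $r^{\ast}/\gamma^{\ast}=\sigma_1+\xi_0 r/\gamma$ and $(r/\gamma)^2=P+\lre_2+\sigma_1 r/\gamma$, which convert the offending factor back into a $P$ in the numerator at the cost of exactly one $\frac{1}{\gamma}\lra_2$. None of this is visible in your argument.

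Relatedly, you misidentify the source of the $\frac{1}{\gamma}\lra_2$ term in $\mdbar_{\zeta}\vartheta_{\zeta}\lra_3$: it does not come from a normal derivative hitting $\Phi$ via Lemma \ref{lphi}(i) (note that $\overline{L}_n\Phi=-\gamma^{\ast}+\cdots$ would produce a \emph{positive} power of $\gamma^{\ast}$, not $1/\gamma$); it comes from the derivative of $P$ as described above. Likewise the alternative form with $\frac{1}{\gamma^{\ast}}\lra_2$ is not obtained by the symmetry $\Phi=\Phi^{\ast}+\lre_3$, but simply by keeping $\mdbar_{\zeta}P=\lre_1+\xi_0 r^{\ast}/\gamma^{\ast}$ as is, instead of converting $r^{\ast}/\gamma^{\ast}$ to $r/\gamma$. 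To repair the proof you should either quote the error structure from \cite{Eh10} as the paper does, or carry your expansion far enough to exhibit it, and then perform the explicit chain of computations of the type (\ref{oneder})--(\ref{newerror}) rather than appealing to a type-counting heuristic.
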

\begin{proof}
(\ref{lq}) is given in \cite{Eh10}.  To see the error terms have
the property we write from \cite{Eh10}
\begin{equation*}
\lrl_q= \sum_{\mu=0}^{n-q-2}
\left( g_{q\mu}C_{q\mu}+\frac{\lre_2\wedge\mdbar r}
 {\overline{\Phi}^{\mu+1}P^{n-\mu-1}}\right) +\lre_0,
\end{equation*}
where
\begin{equation*}
g_{q\mu} = c_{nq} {n-2-\mu \choose q}
\end{equation*}
and
\begin{equation*}
C_{q\mu}=\sum_{{j<n}\atop{|L|=q}}
 \frac{\overline{L}_j\rho^2}{\overline{\Phi}^{\mu+1}P^{n-\mu-1}}
\gamma
\overline{\omega}^{njL}\wedge\Theta^L.
\end{equation*}

We have
\begin{align}
\label{oneder} \vartheta_{\zeta}
 \left(\frac{\lre_2\wedge \mdbar
r}{\overline{\Phi}^{\mu+1}P^{n-\mu-1}}\right)
 =&
 \frac{\lre_{2}+\xi_1\lre_{1}}{\overline{\Phi}^{\mu+1}P^{n-\mu-1}}
+ \frac{\lre_2\wedge \mdbar r}{\overline{\Phi}^{\mu+2}P^{n-\mu-1}}
(\lre_1)\\
\nonumber
&+\frac{\lre_2\wedge \mdbar r}{\overline{\Phi}^{\mu+1}P^{n-\mu}}\left(\lre_{1}+
 \xi_{0}\frac{r^{\ast}}{\gamma^{\ast}}\right)\\
\nonumber =&\lra_{2}.
\end{align}
Similarly,
\begin{equation*}
 \partial_z \left(\frac{\lre_2\wedge \mdbar r}{\overline{\Phi}^{\mu+1}P^{n-\mu-1}}\right)
 = \lra_{2}.
\end{equation*}

We use (\ref{oneder}) and Lemma \ref{lphi} to calculate
\begin{align*}
\mdbar_{\zeta}\vartheta_{\zeta} \left(\frac{\lre_2\wedge \mdbar
r}{\overline{\Phi}^{\mu+1}P^{n-\mu-1}}\right).
\end{align*}
We note $\mdbar\vartheta\mdbar r=0$, and calculate
\begin{align*}
\mdbar_{\zeta}\left(\frac{\vartheta_{\zeta}(\lre_2\wedge \mdbar
r)}{\overline{\Phi}^{\mu+1}P^{n-\mu-1}} \right)
=&\frac{\lre_{1}+\xi_1\lre_{0}}{\overline{\Phi}^{\mu+1}P^{n-\mu-1}}
+\frac{\lre_{2}+\xi_1\lre_{1}}
 {\overline{\Phi}^{\mu+2}P^{n-\mu-1}}\lre_2\\
&+\frac{\lre_{2}+\xi_1\lre_{1}}
 {\overline{\Phi}^{\mu+1}P^{n-\mu}}
 (\lre_{1}+
 \xi_{0}\xi_1^{\ast})\\
=& \lra_{1}.
\end{align*}

We also have
\begin{align*}
\mdbar_{\zeta} \Bigg(
 \frac{\lre_3\wedge \mdbar r}{\overline{\Phi}^{\mu+2}P^{n-\mu-1}}
\Bigg)
=&\frac{\xi_1\lre_{2}}{\overline{\Phi}^{\mu+2}P^{n-\mu-1}}
+ \frac{\lre_3\wedge \mdbar r}{\overline{\Phi}^{\mu+3}P^{n-\mu-1}}
(\lre_2)\\
\nonumber
&+\frac{\lre_3\wedge \mdbar r}{\overline{\Phi}^{\mu+2}P^{n-\mu}}(\lre_{1}+
 \xi_{0}\xi_1^{\ast})\\
\nonumber =&\lra_{1},
\end{align*}
and
\begin{align}
\label{newerror}
 \mdbar_{\zeta}\Bigg(
 \frac{\lre_2\wedge \mdbar r}{\overline{\Phi}^{\mu+1}P^{n-\mu}}&\left(\lre_{1}+
 \xi_{0}\frac{r^{\ast}}{\gamma^{\ast}}\right)\Bigg)\\
 \nonumber
=&\mdbar_{\zeta}\Bigg(
 \frac{\lre_2\wedge \mdbar r}{\overline{\Phi}^{\mu+1}P^{n-\mu}}\Bigg)
 \left(\lre_{1}+
 \xi_{0}\frac{r^{\ast}}{\gamma^{\ast}}\right)+
\frac{\lre_2\wedge \mdbar r}{\overline{\Phi}^{\mu+1}P^{n-\mu}}
(\lre_0+\xi_{-1}\xi_1^{\ast})
\\
\nonumber
 =&
\frac{\xi_1\lre_{1}}{\overline{\Phi}^{\mu+1}P^{n-\mu}}
\left(\lre_{1}+
 \xi_{0}\frac{r^{\ast}}{\gamma^{\ast}}\right)
+ \frac{\lre_2\wedge \mdbar r}{\overline{\Phi}^{\mu+2}P^{n-\mu}}
(\lre_2)(\lre_{1}+
 \xi_0\xi_1^{\ast})\\
 \nonumber
&+\frac{\lre_2\wedge \mdbar r}{\overline{\Phi}^{\mu+1}P^{n-\mu+1}}\left(\lre_{1}+
 \xi_{0}\frac{r^{\ast}}{\gamma^{\ast}}\right)(\mdbar_{\zeta}P)
 +\lra_1.
\end{align}

We can now write
\begin{equation*}
\lre_{1}+
 \xi_{0}\frac{r^{\ast}}{\gamma^{\ast}}
  = \sigma_1 +\xi_{0}\frac{r}{\gamma}
\end{equation*}
in (\ref{newerror}) and
\begin{align*}
\mdbar_{\zeta}P &= \lre_{1}+
 \xi_{0}\frac{r^{\ast}}{\gamma^{\ast}}\\
 &=\sigma_1+ \xi_{0}\frac{r}{\gamma}.
\end{align*}
We then use
\begin{align*}
\left(\frac{r}{\gamma}\right)^2
 &=\frac{rr^{\ast}}{\gamma\gamma^{\ast}}
 +\sigma_1\frac{r}{\gamma}\\
 &=P+\lre_2+\sigma_1\frac{r}{\gamma}
\end{align*}
to write
\begin{align*}
\frac{\lre_2\wedge \mdbar r}{\overline{\Phi}^{\mu+1}P^{n-\mu+1}}
\left(\xi_{0}\frac{r}{\gamma}\right)^2
 &=\frac{\lre_2\wedge \mdbar r}{\overline{\Phi}^{\mu+1}P^{n-\mu+1}}
  (P+\lre_2)+
\frac{\lre_2\wedge \mdbar r}{\overline{\Phi}^{\mu+1}P^{n-\mu+1}}
\sigma_1\frac{r}{\gamma}
  \\
  &=\lra_1 + \frac{1}{\gamma}\lra_2.
\end{align*}
Thus (\ref{newerror}) gives terms of the form
\begin{equation*}
\lra_1+\frac{1}{\gamma}\lra_2.
\end{equation*}
Alternatively, using
\begin{equation*}
\mdbar_xP = \lre_{1}+
 \xi_{0}\frac{r^{\ast}}{\gamma^{\ast}}
\end{equation*}
directly in (\ref{newerror}) leads to terms of the form
\begin{equation*}
\lra_1+\frac{1}{\gamma^{\ast}}\lra_2.
\end{equation*}

Similar calculations hold for $\mdbar_{\zeta}\partial_z\lra_{3}$.
\end{proof}

In order to calculate the derivations of $\lrl_q$ which turn up in
our formula (\ref{tqdef}), we set
\begin{align*}
\lrm_{kj}^{\mu}&=\Lambda_k\left(
 \frac{\overline{L}_j\rho^2}{\overline{\Phi}^{\mu+1}P^{n-\mu-1}}\right)\\
&=-(\mu+1)\frac{\overline{L}_j\rho^2}{\overline{\Phi}^{\mu+2}P^{n-\mu-1}}
  \Lambda_k\overline{\Phi}+\frac{1}{\overline{\Phi}^{\mu+1}}\Lambda_k\left(
 \frac{\overline{L}_j\rho^2}{P^{n-\mu-1}}\right)\\
\widetilde{\lrm}_{kj}^{\mu}&=L_k\left(
 \frac{\overline{L}_j\rho^2}{\overline{\Phi}^{\mu+1}P^{n-\mu-1}}\right)\\
&=-(\mu+1)\frac{\overline{L}_j\rho^2}{\overline{\Phi}^{\mu+2}P^{n-\mu-1}}
  L_k\overline{\Phi}+\frac{1}{\overline{\Phi}^{\mu+1}}L_k\left(
 \frac{\overline{L}_j\rho^2}{P^{n-\mu-1}}\right).
\end{align*}
From
\begin{equation*}
\Lambda_k\overline{L}_j \rho^2
 = -2\delta_{jk} +\xi_{-1}\xi_0^{\ast}\sigma_1
  +\xi_{-1}\xi_{-1}^{\ast}\sigma_2
 \end{equation*}
 we have, for $k<n$,
\begin{equation*}
\frac{1}{\overline{\Phi}^{\mu+1}}\Lambda_k\left(
 \frac{\overline{L}_j\rho^2}{P^{n-\mu-1}}\right) =
\frac{1}{\overline{\Phi}^{\mu+1}}\left[
 \frac{-2\delta_{kj}}{P^{n-\mu-1}}+
 \frac{n-\mu-1}{P^{n-\mu}}(L_k\rho^2)(\overline{L}_j\rho^2)\right]
+\frac{1}{\gamma\gamma^{\ast}}\lra_{2},
\end{equation*}
and for $k=n$ and $j<n$, using Lemma \ref{lp}, we have
\begin{equation*}
\frac{1}{\overline{\Phi}^{\mu+1}}\Lambda_n\left(
 \frac{\overline{L}_j\rho^2}{P^{n-\mu-1}}\right)
=\frac{1}{\gamma}\frac{2(n-\mu-1)\overline{L}_j\rho^2}{\overline{\Phi}^{\mu}P^{n-\mu}}
+\frac{1}{\gamma\gamma^{\ast}}\lra_2
.
\end{equation*}
Thus, for $k<n$,
\begin{align}
\label{mkjfrom}
&\lrm_{kj}^{\mu} = \frac{1}{\overline{\Phi}^{\mu+1}}\left[
 \frac{-2\delta_{kj}}{P^{n-\mu-1}}+\frac{n-\mu-1}{P^{n-\mu}}(L_k\rho^2)(\overline{L}_j\rho^2)\right]
+\frac{1}{\gamma\gamma^{\ast}}\lra_{2}
   \\
\nonumber
&\lrm_{nj}^{\mu} =\frac{1}{\gamma}\frac{2(n-\mu-1)\overline{L}_j\rho^2}{\overline{\Phi}^{\mu}P^{n-\mu}}
+\frac{1}{\gamma\gamma^{\ast}}\lra_{2}
,
 \end{align}
 taking into account calculations such as multiplying and the
 dividing by a factor of $\gamma$ in order to obtain a type two
 operator divided by a factor of $\gamma$.

We calculate in a similar manner the $\widetilde{\lrm}_{kj}^{\mu}$ terms.
 For these terms we use
 the symmetry of (\ref{derapp}) to write
\begin{equation*}
\overline{L}_j=\frac{\partial}{\partial \overline{\zeta}_j}
 +(\xi_0\xi_{-1}^{\ast}\sigma_1)\Lambda,
\end{equation*}
and as a consequence
\begin{equation*}
L_k\overline{L}_j\rho^2
 =2\delta_{jk} +\xi_{-1}\xi_0^{\ast}\sigma_1
  +\xi_{-1}\xi_{-1}^{\ast}\sigma_2.
 \end{equation*}

For $k<n$,
\begin{align}
\nonumber
\widetilde{\lrm}_{kj}^{\mu}
 =&\frac{1}{\overline{\Phi}^{\mu+1}}L_k\left(\frac{\overline{L}_j\rho^2}{P^{n-\mu-1}}\right)
 +\frac{1}{\gamma\gamma^{\ast}}\lra_{2}\\
\nonumber
=&\frac{1}{\overline{\Phi}^{\mu+1}}
\frac{L_k\overline{L}_j\rho^2}{P^{n-\mu-1}}-\frac{n-\mu-1}{\overline{\Phi}^{\mu+1}}
 \frac{\overline{L}_j\rho^2}{P^{n-\mu}}L_kP
+\frac{1}{\gamma\gamma^{\ast}}\lra_{2}\\
\label{mkjfrom2}
 =&
 \frac{2\delta_{kj}}{\overline{\Phi}^{\mu+1}P^{n-\mu-1}}
 -(n-\mu-1)\frac{(\overline{L}_j\rho^2)(L_k\rho^2)}{\overline{\Phi}^{\mu+1}P^{n-\mu}}
+\frac{1}{\gamma\gamma^{\ast}}\lra_{2}
.
\end{align}
For $k=n$ and $j<n$, we calculate
\begin{equation*}
 \widetilde{\lrm}_{nj}^{\mu}
 =
\frac{1}{\overline{\Phi}^{\mu+1}}L_n\left(\frac{\overline{L}_j\rho^2}{P^{n-\mu-1}}\right) -
 (\mu+1)\frac{\overline{L}_j\rho^2}{\overline{\Phi}^{\mu+2}P^{n-\mu-1}}L_n\overline{\Phi}.
\end{equation*}
Using Lemma \ref{lphi} we can write
\begin{align}
\label{mntil}
\widetilde{\lrm}_{nj}^{\mu}
 =&\gamma^{\ast}(\mu+1)
 \frac{\overline{L}_j\rho^2}{\overline{\Phi}^{\mu+2}P^{n-\mu-1}}
-
(n-\mu-1)\frac{\overline{L}_j\rho^2}{\overline{\Phi}^{\mu+1}P^{n-\mu}}L_nP
+\frac{1}{\gamma\gamma^{\ast}}\lra_{2}
.
\end{align}
We now use Lemma \ref{lp} to write the second term on the right side of (\ref{mntil})
 as
\begin{equation*}
\frac{2(n-\mu-1)}{\gamma^{\ast}}
\frac{\Phi \overline{L}_j\rho^2}{\overline{\Phi}^{\mu+1}P^{n-\mu}}
 +\frac{1}{\gamma\gamma^{\ast}}\lra_{2}
,
\end{equation*}
and we can then write
\begin{equation*}
\widetilde{\lrm}_{nj}^{\mu}
 =\gamma^{\ast}(\mu+1)
 \frac{\overline{L}_j\rho^2}{\overline{\Phi}^{\mu+2}P^{n-\mu-1}}
 +\frac{2(n-\mu-1)}{\gamma^{\ast}}
\frac{\Phi \overline{L}_j\rho^2}{\overline{\Phi}^{\mu+1}P^{n-\mu}}
+\frac{1}{\gamma\gamma^{\ast}}\lra_{2}
.
\end{equation*}

From Lemma \ref{lemmalq} we have
\begin{align*}
&\vartheta_{\zeta}\lrl_q=-c_{nq}
 \sum_{{k,\mu\atop j\ne n}\atop{|K|=q+1\atop |L|=q}}
 {n-2-\mu \choose q}
\widetilde{\lrm}_{kj}^{\mu}
 \gamma\varepsilon_{njL}^{kK}\overline{\omega}^K\wedge \Theta^L +
\frac{1}{\gamma}\lra_{2}\\
&\partial_z\lrl_{q-1}=c_{n,q-1}
 \sum_{{j,k,\mu}\atop{|K|=q+1\atop |L|=q}}
 {n-2-\mu \choose q-1}
\lrm_{kj}^{\mu}
 \gamma\varepsilon^{njQ}_{K}\varepsilon^{kQ}_{L}
\overline{\omega}^K\wedge \Theta^L + \frac{1}{\gamma}\lra_{2}.
\end{align*}
We separate the terms with $n\in K$ from those with $n\notin K$.
\begin{align}
 \nonumber
&\vartheta_{\zeta}\lrl_q-\partial_z\lrl_{q-1}
 =-\sum_{{ j< n}\atop{n\notin K\atop L}}c_{nq}
 \sum_{\mu}{n-2-\mu \choose q}
\widetilde{\lrm}_{nj}^{\mu}
 \gamma\varepsilon_{jL}^{K}\overline{\omega}^K\wedge \Theta^L\\
\nonumber
 &+\sum_{{j,k< n}\atop{n\notin J\atop n\notin
L}}\Bigg(c_{nq}
 \sum_{\mu}{n-2-\mu \choose q}
\widetilde{\lrm}_{kj}^{\mu}
 \gamma\varepsilon_{jL}^{kJ}\\
 &\qquad \qquad
 \nonumber
-c_{n,q-1}
 \sum_{\mu}
 {n-2-\mu \choose q-1}
\lrm_{kj}^{\mu}
 \gamma\varepsilon^{jQ}_{J}\varepsilon^{kQ}_{L}\Bigg)
\overline{\omega}^{nJ}\wedge \Theta^L\\
 \label{sepn}
&-\sum_{{ j< n}\atop{n\notin J\atop n\notin Q}}c_{n,q-1}
 \sum_{\mu}{n-2-\mu \choose q-1}
\lrm_{nj}^{\mu}
 \gamma\varepsilon_{J}^{jQ}\overline{\omega}^{nJ}\wedge
 \Theta^{nQ}
+\frac{1}{\gamma}\lra_{2}.
\end{align}

We write
\begin{equation*}
  \vartheta_{\zeta}\lrl_q-\partial_z\lrl_{q-1}
 =\sum_{|L|=q}\lrh_L\wedge\Theta^L=\lrh_q
\end{equation*} and compute the $\lrh_L$ terms.

For $n\in L$ we have
\begin{equation*}
\lrh_{nQ}=
 -2^{n-1}\left(\frac{1}{2\pi}\right)^n
 (n-1)!\frac{1}{P^n}\sum_{j<n\atop j\notin Q}
\overline{L}_j\rho^2\overline{\omega}^{njQ}
 +\frac{1}{\gamma^{\ast}}\lra_{2}.
\end{equation*}

For $n\notin L$ we distinguish the following different cases for
the exponent, $K$ of $\overline{\omega}$ in (\ref{sepn}).
\begin{enumerate}
\item[a)] $K=lL$ with $l<n$\\
\item[b)] $K=nL$\\
\item[c)] $K=nJ$, $J\neq L$.
\end{enumerate}
In case $a)$ we have
\begin{align*}
\lrh_{L}=& -\sum_{j<n\atop j\notin L} c_{nq}
 \Bigg(\sum_{\mu} {n-\mu-2 \choose q} \gamma^2(\mu+1)
 \frac{\overline{L}_j\rho^2}{\overline{\Phi}^{\mu+2}P^{n-\mu-1}}\\
 &\qquad
 +2(n-1)\frac{\gamma}{\gamma^{\ast}}
\frac{\Phi \overline{L}_j\rho^2}{\overline{\Phi}P^{n}}
\Bigg)\overline{\omega}^{jL}+\frac{1}{\gamma^{\ast}}\lra_{2}.
\end{align*}

For case $b)$, we have
\begin{align*}
\lrh_{L}=&
 \sum_{j<n\atop n\notin L}\left(c_{nq}
 \sum_{\mu}{n-2-\mu \choose q}
\widetilde{\lrm}_{jj}^{\mu} -c_{n,q-1}
 \sum_{\mu}
 {n-2-\mu \choose q-1}
\lrm_{jj}^{\mu}
 \right)
\gamma\overline{\omega}^{nL}\\
=&2^{n-2}\left(\frac{1}{2\pi}\right)^n (n-2)!\sum_{j<n\atop
n\notin L}
 \left(
 \frac{2}{\overline{\Phi}P^{n-1}}
 -(n-1)
\frac{|L_j\rho^2|^2}{\overline{\Phi}P^{n}}\right)\gamma\overline{\omega}^{nL}
+\frac{1}{\gamma^{\ast}}\lra_{2}
\\
=&2^{n-2}\left(\frac{1}{2\pi}\right)^n (n-1)!
 \frac{1}{\overline{\Phi}P^{n}}\sum_{n\notin
L}  \left(
 2P
 -\sum_{j<n} |L_j\rho^2|^2\right)\gamma\overline{\omega}^{nL}
 +\frac{1}{\gamma^{\ast}}\lra_{2}
\\
=&\frac{1}{\gamma^{\ast}} \frac{2^{n-2}}{(2\pi)^n} (n-1)!
 \frac{4\Phi}{P^n}\overline{\omega}^{nL}
+\frac{1}{\gamma^{\ast}}\lra_{2}\\
=&\frac{1}{\gamma} \frac{2^{n-2}}{(2\pi)^n} (n-1)!
 \frac{4\Phi}{P^n}\overline{\omega}^{nL}
+\frac{1}{\gamma\gamma^{\ast}}\lra_{2} .
\end{align*}

For case $c)$ we write
\begin{align*}
 \lrh_L &=
 \sum_{j\neq k\atop n\notin
L}\left(c_{nq}
 \sum_{\mu}{n-2-\mu \choose q}
\widetilde{\lrm}_{kj}^{\mu}
 \gamma\varepsilon_{jL}^{kJ}
-c_{n,q-1}
 \sum_{\mu}
 {n-2-\mu \choose q-1}
\lrm_{kj}^{\mu}
 \gamma\varepsilon^{jQ}_{J}\varepsilon^{kQ}_{L}\right)
\overline{\omega}^{nJ}\\
&= -
 \sum_{j\neq k\atop n\notin
L}\left(c_{nq}
 \sum_{\mu}{n-2-\mu \choose q}
\widetilde{\lrm}_{kj}^{\mu}
+c_{n,q-1}
 \sum_{\mu}
 {n-2-\mu \choose q-1}
\lrm_{kj}^{\mu}
\right) \gamma\varepsilon_{jL}^{kJ}
\overline{\omega}^{nJ},
\end{align*}
since $\varepsilon_{jL}^{kJ}=-\varepsilon_J^{jQ}\varepsilon_L^{kQ}$ when $k\neq j$.
Thus from (\ref{mkjfrom}) and (\ref{mkjfrom2})
 we can write
\begin{align*}
 \frac{2^{n-2}}{(2\pi)^n} (n-1)!&
 \frac{1}{\overline{\Phi}P^{n}}
 \sum_{j\neq k\atop n\notin L}
 \big(-(\overline{L}_j\rho^2)(L_k\rho^2)
+(\overline{L}_j\rho^2)(L_k\rho^2)\big) \gamma\varepsilon_{jL}^{kJ}
\overline{\omega}^{nJ}
+\frac{1}{\gamma^{\ast}}\lra_{2}
\\
=&\frac{1}{\gamma^{\ast}}\lra_{2}.
\end{align*}

We have therefore established the
\begin{lemma}
\label{hterms} \label{hlem}
$\vartheta_{\zeta}\lrl_q-\partial_z\lrl_{q-1}
 =\sum_{|L|=q}\lrh_L\wedge\Theta^L$ with
\begin{align*}
\lrh_{nQ}=&
 -2^{n-1}\left(\frac{1}{2\pi}\right)^n
 (n-1)!\frac{1}{P^n}\sum_{j<n\atop j\notin Q}
\overline{L}_j\rho^2\overline{\omega}^{njQ}
+\frac{1}{\gamma^{\ast}}\lra_{2}
,
\end{align*}
and for $n\notin L$:
\begin{align*}
 \lrh_L
 =&-\sum_{j<n\atop j\notin L} c_{nq}
 \Bigg(\sum_{\mu} {n-\mu-2 \choose q} \gamma^2
 \frac{(\mu+1)\overline{L}_j\rho^2}{\overline{\Phi}^{\mu+2}P^{n-\mu-1}}\\
 &\qquad
 +2{n-2 \choose q}(n-1)\frac{\gamma}{\gamma^{\ast}}
\frac{\Phi \overline{L}_j\rho^2}{\overline{\Phi}P^{n}}
\Bigg)\overline{\omega}^{jL} +\frac{1}{\gamma}
\frac{2^{n-2}}{(2\pi)^n} (n-1)!
 \frac{4\Phi}{P^n}\overline{\omega}^{nL}
+\frac{1}{\gamma\gamma^{\ast}}\lra_{2}.
\end{align*}
\end{lemma}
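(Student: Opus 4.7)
The plan is to start from the explicit formula for $\lrl_q$ given by Lemma \ref{lemmalq}, apply the derivations $\vartheta_\zeta$ and $\partial_z$ termwise, and then track the resulting expressions according to whether the ``normal'' index $n$ appears in the basis monomial $\overline{\omega}^K\wedge\Theta^L$. Since the statement concerns $\vartheta_\zeta\lrl_q-\partial_z\lrl_{q-1}$, the essential data are the derivatives $\Lambda_k$ and $L_k$ acting on the factor $\frac{\overline{L}_j\rho^2}{\overline{\Phi}^{\mu+1}P^{n-\mu-1}}$; these are exactly the kernels $\lrm_{kj}^\mu$ and $\widetilde{\lrm}_{kj}^\mu$ introduced just before the lemma, whose evaluations are already carried out in (\ref{mkjfrom}), (\ref{mkjfrom2}), and (\ref{mntil}) using Lemmas \ref{lphi} and \ref{lp}. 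The error terms $\lra_3$ from Lemma \ref{lemmalq} contribute only $\frac{1}{\gamma}\lra_2$ after differentiation, so they can be absorbed throughout.

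Next I would expand $\vartheta_\zeta\lrl_q-\partial_z\lrl_{q-1}$ as in (\ref{sepn}) and split off the coefficient of each $\Theta^L$, writing it as $\lrh_L$. The analysis then breaks into two regimes. If $n\in L$, say $L=nQ$, only the $\lrm_{nj}^\mu$ terms from $\partial_z\lrl_{q-1}$ contribute to the leading order, and after summing the binomial coefficients $\binom{n-\mu-2}{q-1}$ the $\mu$-sum telescopes cleanly to the stated formula with a single $\frac{1}{P^n}$ and the $\overline{L}_j\rho^2$ numerator. If $n\notin L$, one distinguishes three subcases for the cotangent index $K$: (a) $K=lL$ with $l<n$, in which case the $\widetilde{\lrm}_{nj}^\mu$ contributions from $\vartheta_\zeta\lrl_q$ survive and give the claimed ``$\gamma^2$'' and ``$\gamma/\gamma^*$'' terms; (b) $K=nL$, where diagonal contributions $k=j<n$ from both $\widetilde{\lrm}_{jj}^\mu$ and $\lrm_{jj}^\mu$ combine; (c) $K=nJ$ with $J\neq L$, i.e.\ off-diagonal $k\neq j$ contributions.

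The main obstacle is the delicate cancellation in case (b), which is where the characteristic $\frac{4\Phi}{P^n}$ factor materializes. Here one collects the on-diagonal sum, obtaining
\begin{equation*}
\lrh_L = \frac{2^{n-2}}{(2\pi)^n}(n-1)!\,
\frac{1}{\overline{\Phi}P^{n}}\Bigl(2P-\sum_{j<n}|L_j\rho^2|^2\Bigr)\gamma\,\overline{\omega}^{nL}
+ \frac{1}{\gamma^*}\lra_2,
\end{equation*}
after first performing the $\mu$-sum via $\sum_\mu\binom{n-\mu-2}{q}=\binom{n-1}{q+1}$ type identities and using $|L_j\rho^2|^2=|\partial\rho^2/\partial\zeta_j|^2+\xi_{-1}\xi_0^*\sigma_3$. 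One then invokes Lemma \ref{lp}(iii) to replace the bracket by $\frac{4|\Phi|^2}{\gamma\gamma^*}$ modulo lower-order $\sigma$-terms, which are in turn absorbed into $\frac{1}{\gamma\gamma^*}\lra_2$. The $\overline{\Phi}$ in the denominator cancels one factor of $\Phi$ from the $|\Phi|^2$ in the numerator, leaving the advertised $\frac{4\Phi}{P^n}$; the $\gamma$ in front combined with the $\frac{1}{\gamma\gamma^*}$ from Lemma \ref{lp}(iii) yields the $\frac{1}{\gamma^*}$ (equivalently $\frac{1}{\gamma}$ up to error) in the final formula.

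Finally, for case (c) I would exploit the antisymmetry $\varepsilon_{jL}^{kJ}=-\varepsilon_J^{jQ}\varepsilon_L^{kQ}$ for $k\neq j$ to align the two off-diagonal sums. Reading off the principal parts of $\widetilde{\lrm}_{kj}^\mu$ and $\lrm_{kj}^\mu$ from (\ref{mkjfrom}) and (\ref{mkjfrom2}), the quadratic factors $(\overline{L}_j\rho^2)(L_k\rho^2)$ appear with opposite signs (once the binomial identity $\binom{n-\mu-2}{q}+\binom{n-\mu-2}{q-1}=\binom{n-\mu-1}{q}$ is applied to line up the $\mu$-sums), so these contributions cancel modulo $\frac{1}{\gamma^*}\lra_2$, yielding no additional term in $\lrh_L$ beyond the error. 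Collecting the surviving pieces from cases (a) and (b) in the $n\notin L$ regime produces exactly the displayed formula for $\lrh_L$, while case $n\in L$ produces the formula for $\lrh_{nQ}$, completing the proof.
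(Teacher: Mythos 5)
Your proposal follows essentially the same route as the paper: the same reduction to the kernels $\lrm_{kj}^{\mu}$ and $\widetilde{\lrm}_{kj}^{\mu}$ via (\ref{mkjfrom}), (\ref{mkjfrom2}), (\ref{mntil}), the same separation (\ref{sepn}) according to whether $n$ lies in the $\overline{\omega}$-index, the same three subcases for $n\notin L$, the identity of Lemma \ref{lp} $iii.$ producing the $4\Phi/P^n$ term in case $b)$, and the antisymmetry of $\varepsilon_{jL}^{kJ}$ giving the cancellation in case $c)$. The only deviations are cosmetic (e.g.\ the ``telescoping'' of the $\mu$-sum for $\lrh_{nQ}$ is really just the observation that only the $\mu=0$ term survives at leading order, the rest being absorbed into the admissible error), so the argument is correct and matches the paper's proof.
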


\section{The structure of the kernels $\lrt_q$}
In this section we prove Theorem \ref{nkern} which expresses the
kernels, $\lrt_q$ and $\lrt_q^{\ast}$ as derivatives of explicitly
computed simpler kernels.  We solve
\begin{align*}
&\mdbar_{\zeta}\lrn_q = \lrt_q + \frac{1}{\gamma\gamma^{\ast}}
\lra_2\\
&\mdbar_{\zeta}^{\ast}\lrn_q = \lrt_{q-1}^{\ast} +
\frac{1}{\gamma\gamma^{\ast}}
\lra_2\\
 &\lrn_q=\lrn_q^{\ast}+\frac{1}{\gamma}\lra_{3} +
\frac{1}{\gamma^{\ast}}\lra_{3}\\
&\left. \ast\lrn_q\right|_{\partial D}\equiv 0.
\end{align*}

We set
\begin{equation*}
 \lrn_q=\lrg_q+\Gamma_{0q}
\end{equation*}
and determine $\lrg_q$.
With
\begin{equation*}
 \lrg_q=\sum_{|L|=q}\lrg_L\wedge\Theta^L, \qquad \lrg_q=\lrg_q^{\ast}+
\frac{1}{\gamma}\lra_{3}+\frac{1}{\gamma^{\ast}}\lra_{3}
\end{equation*}
we solve
\begin{equation}
 \label{dgh}
 \mdbar_{\zeta}\lrg_L=\lrh_L
 +\frac{1}{\gamma\gamma^{\ast}}\lra_2
\end{equation}
where $\lrh_L$ is as in Lemma \ref{hlem}.


If $q<n-1$ then we obtain (\ref{dgh}) by choosing
\begin{align}
\label{gnq}
 \lrg_{nQ}=&-\frac{2^{n-1}(n-2)!}{(2\pi)^n}
 \frac{1}{P^{n-1}}\overline{\omega}^{nQ}, \qquad L=nQ,\\
\nonumber
 \lrg_{L}=&c_{nq}
 \Bigg(\sum_{\mu} {n-\mu-2 \choose q}
 \gamma^2\frac{\mu+1}{n-\mu-2}
 \frac{1}{\overline{\Phi}^{\mu+2}P^{n-\mu-2}}\\
 \label{gn-1}
 &\qquad
 +{n-2 \choose q}\frac{\gamma}{\gamma^{\ast}}
\frac{2\Phi}{\overline{\Phi}P^{n-1}} \Bigg)\overline{\omega}^{L}
\qquad n\notin L.
\end{align}

We verify (\ref{dgh}) for the case $q<n-1$ by calculating
$\mdbar_x\lrg_L$. That
 $\mdbar_x \lrg_{nQ}=\lrh_{nQ}$ is easy to see, and we turn to
(\ref{dgh}) in the case $n \notin L$.  We have
\begin{align*}
 \mdbar_{\zeta} \lrg_L
 &=
  \sum_{j\notin L} c_{nq}
 \overline{L}_j \Bigg(\sum_{\mu} {n-\mu-2 \choose q}
 \gamma^2\frac{\mu+1}{n-\mu-2}
 \frac{1}{\overline{\Phi}^{\mu+2}P^{n-\mu-2}}\\
 \nonumber
 &\qquad
 +{n-2 \choose q}\frac{\gamma}{\gamma^{\ast}}
\frac{2\Phi}{\overline{\Phi}P^{n-1}}
\Bigg)\overline{\omega}^{jL}\\
\nonumber
 & = \sum_{j\notin L} c_{nq}
 \Bigg(\sum_{\mu} {n-\mu-2 \choose q}
 \gamma^2\frac{\mu+1}{n-\mu-2}
 \overline{L}_j \left(\frac{1}{\overline{\Phi}^{\mu+2}P^{n-\mu-2}}
 \right)\\
 \nonumber
 &\qquad
 +{n-2 \choose q}\frac{\gamma}{\gamma^{\ast}}
\overline{L}_j \left( \frac{2\Phi}{\overline{\Phi}P^{n-1}}\right)
\Bigg)\overline{\omega}^{jL} + \frac{1}{\gamma^{\ast}}\lra_2.
\end{align*}
We consider separately the cases $j<n$ and $j=n$.

In view of Lemma \ref{lphi}, we have, for $j<n$,
\begin{align*}
\gamma^2\overline{L}_j
\left(\frac{1}{\overline{\Phi}^{\mu+2}P^{n-\mu-2}}
 \right)
  =& -(n-\mu-2)\gamma^2 \frac{1}{\overline{\Phi}^{\mu+2}P^{n-\mu-1}}
\overline{L}_jP+\lra_2\\
=&-(n-\mu-2)\gamma^2 \frac{1}{\overline{\Phi}^{\mu+2}P^{n-\mu-1}}
\overline{L}_j\rho^2+\lra_2,
\end{align*}
where the last line follows from Lemma \ref{lp} $ii.$  Similarly,
we have
\begin{align*}
\frac{\gamma}{\gamma^{\ast}}
 \overline{L}_j \left(
\frac{2\Phi}{\overline{\Phi}P^{n-1}}\right)
 &= -(n-1) \frac{\gamma}{\gamma^{\ast}}
  \frac{2\Phi}{\overline{\Phi}P^{n}}\overline{L}_j P +
 \lra_2\\
 &=-(n-1)  \frac{\gamma}{\gamma^{\ast}}
 \frac{2\Phi}{\overline{\Phi}P^{n}}\overline{L}_j \rho^2  +
 \frac{1}{\gamma^{\ast}}\lra_2.
 \end{align*}

We thus far can write
\begin{align}
\label{dbarg}
 \mdbar_{\zeta} \lrg_L =&-\sum_{j<n\atop j\notin L} c_{nq}
 \Bigg(\sum_{\mu} {n-\mu-2 \choose q} \gamma^2
 \frac{(\mu+1)\overline{L}_j\rho^2}{\overline{\Phi}^{\mu+2}P^{n-\mu-1}}\\
 \nonumber
 &\qquad
 +2{n-2 \choose q}(n-1)\frac{\gamma}{\gamma^{\ast}}
\frac{\Phi \overline{L}_j\rho^2}{\overline{\Phi}P^{n}}
\Bigg)\overline{\omega}^{jL}\\
\nonumber
 & + c_{nq}
 \Bigg(\sum_{\mu} {n-\mu-2 \choose q}
 \gamma^2\frac{\mu+1}{n-\mu-2}
 \overline{L}_n \left(\frac{1}{\overline{\Phi}^{\mu+2}P^{n-\mu-2}}
 \right)\\
 \nonumber
 &\qquad
 +{n-2 \choose q}\frac{\gamma}{\gamma^{\ast}}
\overline{L}_n \left( \frac{2\Phi}{\overline{\Phi}P^{n-1}}\right)
\Bigg)\overline{\omega}^{jL} + \frac{1}{\gamma^{\ast}}\lra_2.
\end{align}

 In dealing with $j=n$ we
have
\begin{align}
\label{j=n1}
 \gamma^2 \overline{L}_n
\left(\frac{1}{\overline{\Phi}^{\mu+2}P^{n-\mu-2}}\right)
 &=
 - (n-\mu-2)\gamma^2\frac{1}{\overline{\Phi}^{\mu+2}P^{n-\mu-1}}\overline{L}_n
 P + \lra_2\\
 \nonumber
 &= -2(n-\mu-2) \frac{\gamma^2}{\gamma^{\ast}}
  \frac{1}{\overline{\Phi}^{\mu+1}P^{n-\mu-1}} + \frac{1}{\gamma}
 \lra_2+ \frac{1}{\gamma^{\ast}}
 \lra_2\\
 \nonumber
 &= -2(n-\mu-2) \gamma
  \frac{1}{\overline{\Phi}^{\mu+1}P^{n-\mu-1}} + \frac{1}{\gamma}
 \lra_2 + \frac{1}{\gamma^{\ast}}
 \lra_2,
\end{align}
by Lemma \ref{lp} $i.$  Also, we have
\begin{equation}
 \label{gamst}
\frac{\gamma}{\gamma^{\ast}}
 \overline{L}_n \left(
\frac{2\Phi}{\overline{\Phi}P^{n-1}}\right)
 = -2 \gamma
 \frac{1}{\overline{\Phi}P^{n-1}}
-2(n-1)\frac{\gamma}{\gamma^{\ast}}\frac{\Phi}{\overline{\Phi}P^{n}}
\overline{L}_nP
 +\frac{1}{\gamma^{\ast}} \lra_2
\end{equation}
by Lemma \ref{lphi} $i.$  We now use a variation of Lemma \ref{lp}
$i.$ which follows.  First, using the symmetry involved in
(\ref{lamsym}) we can write
\begin{equation*}
\gamma^{\ast}L_n P=
 -2 \Phi^{\ast}
  +\xi_{-1}\xi_1^{\ast}(P+\lre_2)+\xi_{-1}\xi_1^{\ast}\sigma_2.
\end{equation*}
Now using $\gamma-\gamma^{\ast}=\sigma_1$ we have
\begin{align*}
\gamma \overline{L}_n P=& \gamma^{\ast} \overline{L}_n P
+ \sigma_1 \overline{L}_n P\\
=&-2 \overline{\Phi}
  +\xi_{-1}\xi_1^{\ast}(P+\lre_2)+\xi_{-1}\xi_1^{\ast}\sigma_2
   +\sigma_1 \overline{L}_nP.
\end{align*}
 And so (\ref{gamst}) becomes
\begin{align*}
\frac{\gamma}{\gamma^{\ast}}
 \overline{L}_n \left(
\frac{2\Phi}{\overline{\Phi}P^{n-1}}\right)
 =& -2 \gamma
 \frac{1}{\overline{\Phi}P^{n-1}}
+(n-1)\frac{1}{\gamma^{\ast}}\frac{4\Phi}{P^{n}} +
 \frac{1}{\gamma^{\ast}}\frac{\Phi}{\overline{\Phi}}\sigma_1\overline{L}_nP
 +\frac{1}{\gamma} \lra_2+\frac{1}{\gamma^{\ast}} \lra_2\\
=& -2 \gamma
 \frac{1}{\overline{\Phi}P^{n-1}}
+(n-1)\frac{1}{\gamma}\frac{4\Phi}{P^{n}} +
 \frac{1}{\gamma^{\ast}}\frac{\Phi}{\overline{\Phi}}\sigma_1\overline{L}_nP
 +\frac{1}{\gamma} \lra_2+\frac{1}{\gamma^{\ast}} \lra_2
 .
\end{align*}
We now show the third term on the right can be written as
\begin{equation}
\label{verify}
\frac{\gamma}{\gamma^{\ast}}\frac{\Phi}{\overline{\Phi}}\sigma_1\overline{L}_nP
 =\frac{1}{\gamma\gamma^{\ast}}\lra_2.
\end{equation}
$\Phi$ is a sum of terms of the form
\begin{equation*}
\gamma\xi_0\lre_1+\lre_2-r,
\end{equation*}
and so we consider separately
\begin{align}
\label{1} & \frac{\gamma}{\gamma^{\ast}}
 \frac{\sigma_2}{\overline{\Phi}P^n}\overline{L}_nP\\
 \label{2}
&\frac{1}{\gamma^{\ast}}
 \frac{\sigma_3}{\overline{\Phi}P^n}\overline{L}_nP\\
 \label{3}
&\frac{1}{\gamma^{\ast}}
 \frac{\sigma_1r}{\overline{\Phi}P^n}\overline{L}_nP.
\end{align}
(\ref{2}) leads to the desired error terms with the substitution
\begin{equation*}
\overline{L}_nP = \lre_1+\xi_1^{\ast}.
\end{equation*}
In (\ref{3}) we substitute
\begin{align*}
\overline{L}_n P &= \lre_1 +
 \frac{r^{\ast}}{\gamma^{\ast}} + \frac{1}{\gamma}\xi_0
 (P+\lre_2)\\
 &=\frac{\gamma}{r}(P+\lre_2)+\frac{1}{\gamma}\xi_0
 (P+\lre_2)
\end{align*}
and we obtain
\begin{equation*}
\frac{1}{\gamma^{\ast}}
 \frac{\sigma_1r}{\overline{\Phi}P^n}\overline{L}_nP =
 \frac{1}{\gamma\gamma^{\ast}}\lra_2.
\end{equation*}
Turning now to (\ref{1}), we write
\begin{equation*}
\frac{\gamma}{\gamma^{\ast}}
 = \frac{\gamma^{\ast}}{\gamma}
+\frac{\sigma_1}{\gamma}+\frac{\sigma_1}{\gamma^{\ast}}
\end{equation*}
so as to obtain
\begin{equation*}
\frac{\gamma}{\gamma^{\ast}}
 \frac{\sigma_2}{\overline{\Phi}P^n}\overline{L}_nP
=\frac{\gamma^{\ast}}{\gamma}
 \frac{\sigma_2}{\overline{\Phi}P^n}\overline{L}_nP
 +\frac{1}{\gamma}
 \frac{\sigma_3}{\overline{\Phi}P^n}\overline{L}_nP
 +\frac{1}{\gamma^{\ast}}
 \frac{\sigma_3}{\overline{\Phi}P^n}\overline{L}_nP.
\end{equation*}
We can now substitute
\begin{equation*}
\overline{L}_n P
 = \lre_1 + \xi_0 \frac{r^{\ast}}{\gamma^{\ast}}
\end{equation*}
in the first term on the right hand side, and use
\begin{equation*}
\overline{L}_n P
 = \lre_1 + \xi_0\xi_1^{\ast}
 \end{equation*}
 in the last two terms on the right hand side, in order to
 complete the verification of (\ref{verify}).

 Thus (\ref{gamst}) becomes
 \begin{equation}
 \label{j=n2}
 \frac{\gamma}{\gamma^{\ast}}
 \overline{L}_n \left(
\frac{2\Phi}{\overline{\Phi}P^{n-1}}\right)
 = -2 \gamma
 \frac{1}{\overline{\Phi}P^{n-1}}
+(n-1)\frac{1}{\gamma}\frac{4\Phi}{P^{n}}
 +\frac{1}{\gamma\gamma^{\ast}} \lra_2.
 \end{equation}

 Together, (\ref{j=n1}) and (\ref{j=n2}), when inserted into
 (\ref{dbarg}) give the term
 \begin{equation*}
\frac{1}{\gamma}\frac{2^{n-2}}{(2\pi)^n}(n-1)!
\frac{4\Phi}{P^{n}}\overline{\omega}^{nL}
 +\frac{1}{\gamma\gamma^{\ast}} \lra_2
 \end{equation*}
 which is the remaining part of $\lrh_L$ in (\ref{dgh}).

The calculations leading to the expressions for $\lrg_L$
were done in a special coordinate chart near the boundary.  To
globalize the expressions we note
 there are double forms $\sigma_1$ such that
\begin{equation*}
-\frac{1}{2}\mdbar_{\zeta}\partial_z\rho^2
 = \overline{\omega}^n\wedge\Theta^n + \overline{\omega}^n\wedge\sigma_1
 +\sigma_1\wedge\Theta^n+\tau,
\end{equation*}
where $\tau$ does not contain any $\overline{\omega}^n$ or
$\Theta^n$ terms.  If we set $\nu(\zeta,z)=
\overline{\omega}^n\wedge\Theta^n +
\overline{\omega}^n\wedge\sigma_1
 +\sigma_1\wedge\Theta^n$, we have
\begin{align*}
&\nu(\zeta,z)= \overline{\omega}^n\wedge\Theta^n +\sigma_1\\
&\tau(\zeta,z)= \sum_{j<n} \overline{\omega}^j\wedge\Theta^j
+\sigma_1.
\end{align*}
 We thus have
\begin{align*}
(\mdbar_{\zeta}\partial_z\rho^2)^q&=
 (-2)^q(\tau^q+q\tau^{q-1}\wedge\nu)+\sigma_1\\
\tau^q&=q!\sum_{|L|=q\atop n\notin L} \overline{\omega}^L
 \wedge\Theta^L+\sigma_1\\
\tau^{q-1}\wedge\nu&=(q-1)!\sum_{|Q|=q-1}
 \overline{\omega}^{nQ}
 \wedge\Theta^{nQ}+\sigma_1,
\end{align*}
which we use in connection with (\ref{gnq}) and (\ref{gn-1}) to
write
\begin{prop}
\label{neum}
 Let $n\ge 3$. For $1\le q\le n-2$ let the
differential forms $\lrn_q$ be given by
\begin{align*}
\lrn_q= 2^{n-2}\Bigg(\frac{1}{2\pi}\Bigg)^n&(n-q-2)!
 \Bigg(\sum_{0\le\mu\le n-q-2}\gamma^2 {n-\mu -2 \choose q}
 \frac{\mu+1}{n-\mu-2}\frac{1}{\overline{\Phi}^{\mu+2}P^{n-\mu-2}}\\
&+{n-2 \choose q}\frac{\gamma}{\gamma^{\ast}}
\frac{2\Phi}{\overline{\Phi}P^{n-1}} \Bigg)\tau^q
 -\frac{2^{n-1}(n-2)!}{(q-1)!(2\pi)^n}\frac{1}{P^{n-1}}\tau^{q-1}\wedge\nu
  +\Gamma_{0q}.
\end{align*}
Then the $\lrn_q$ fulfills the first set of equations of Theorem
\ref{nkern}.
\end{prop}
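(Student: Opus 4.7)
The plan is to assemble the preceding computations. Writing $\lrn_q = \lrg_q + \Gamma_{0q}$, the identity $\mdbar_\zeta \lrn_q = \lrt_q + \frac{1}{\gamma\gamma^\ast}\lra_2$ reduces, via (\ref{tqdef}) and Lemma \ref{hterms}, to verifying equation (\ref{dgh}) term by term in $L$. The case $L = nQ$ is immediate from the form of $\lrg_{nQ}$ in (\ref{gnq}). For $n \notin L$, one differentiates the two summands in (\ref{gn-1}) and splits into the subcases $j<n$ (handled by Lemma \ref{lphi} and Lemma \ref{lp}) and $j=n$ (which requires the more delicate identities (\ref{j=n1}) and (\ref{j=n2})). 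Both subcases are precisely the computations already carried out in the paragraphs preceding the proposition, so this step is bookkeeping rather than new calculation.

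The second step is globalization: formulas (\ref{gnq}) and (\ref{gn-1}) were derived in a special coordinate chart where $\partial r = \gamma \omega^n$, and we need a coordinate-free expression. I would use the decomposition $-\frac{1}{2}\mdbar_\zeta \partial_z \rho^2 = \nu + \tau$ with $\nu = \overline{\omega}^n\wedge \Theta^n + \sigma_1$ and $\tau = \sum_{j<n}\overline{\omega}^j\wedge \Theta^j + \sigma_1$, together with the wedge power identities $\tau^q = q!\sum_{n\notin L}\overline{\omega}^L\wedge \Theta^L + \sigma_1$ and $\tau^{q-1}\wedge \nu = (q-1)!\sum_{|Q|=q-1}\overline{\omega}^{nQ}\wedge \Theta^{nQ} + \sigma_1$ stated just above the proposition. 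Matching the scalar coefficients in (\ref{gnq}) and (\ref{gn-1}) against these combinations produces the displayed expression for $\lrn_q$, with the $\sigma_1$ corrections falling into the admissible error class.

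The remaining two conditions of Theorem \ref{nkern}, namely the symmetry $\lrn_q = \lrn_q^\ast + \frac{1}{\gamma}\lra_3 + \frac{1}{\gamma^\ast}\lra_3$ and the boundary vanishing $\ast_\zeta \lrn_q|_{\partial D} = 0$, still have to be checked. Symmetry is straightforward: $\Gamma_{0q}$ is manifestly symmetric in $(\zeta,z)$, and for the other terms one substitutes $\Phi = \Phi^\ast + \lre_3$ (Lemma \ref{phisymm}) and $\gamma = \gamma^\ast + \sigma_1$, the resulting discrepancies landing in the prescribed $\frac{1}{\gamma}\lra_3 + \frac{1}{\gamma^\ast}\lra_3$ error class. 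I expect the boundary condition to be the main obstacle: on $\partial D$ one has $r=0$, so $\Phi$ loses its $-r$ contribution there, and one must verify that the $\ast_\zeta$-images of the $\tau^q$ piece, the $\tau^{q-1}\wedge \nu$ piece, and $\Gamma_{0q}$ fit together to vanish on the boundary. This is a coordinate computation patterned on the smooth case of \cite{LiMi}, adapted to accommodate the $\gamma$-weights near the Morse singularities of the defining function.
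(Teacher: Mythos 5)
Your proposal follows the paper's own route essentially verbatim: the $\mdbar_{\zeta}$ equation is reduced to (\ref{dgh}) and checked case by case ($L=nQ$, then $n\notin L$ split into $j<n$ and $j=n$) using Lemmas \ref{lphi} and \ref{lp} together with (\ref{j=n1})--(\ref{j=n2}), and the global formula is obtained from the $\tau,\nu$ decomposition exactly as in the text, with the symmetry read off from $\Phi=\Phi^{\ast}+\lre_3$ and $\gamma=\gamma^{\ast}+\sigma_1$. The one point of emphasis to adjust is that the boundary condition is not the main obstacle but a built-in feature of the construction: on $\partial D$ one has $P=\rho^2$, so the normal ($\overline{\omega}^{nQ}$) component of the $\frac{1}{P^{n-1}}\,\tau^{q-1}\wedge\nu$ term is chosen precisely to cancel that of $\Gamma_{0q}$, while the $\tau^q$ part carries no $\overline{\omega}^{nQ}$ component and hence contributes nothing to $\left.\ast_{\zeta}\lrn_q\right|_{\partial D}$.
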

\begin{remark} While $\lrn_0$ can also be explicitly computed,
such a term will involve logarithms and does not fit the
definition of admissible operators.  Nonetheless, similar mapping
properties for such operators do exist.  $\lrn_{n-1}$ can also be
explicitly given and can be handled as in \cite{LiMi} combined
with the above methods.  The estimates remain true but the
principal part of $\lrn_{n-1}$ changes.
\end{remark}

We now verify - in order to complete the proof of Theorem
\ref{nkern} -
\begin{equation*}
\mdbar_{\zeta}^{\ast}\lrn_q=\lrt_{q-1}^{\ast}
 +\frac{1}{\gamma\gamma^{\ast}}\lra_{2}
\end{equation*}
for $1\le q\le n-2$, by showing
\begin{equation}
\label{hq=dg}
 \vartheta_{\zeta} \lrg_q=\lrh_{q-1}^{\ast}+
 \frac{1}{\gamma\gamma^{\ast}}\lra_{2}.
\end{equation}
From Lemma \ref{hterms} we have
\begin{align}
\nonumber
 \lrh_{q-1}^{\ast}=&
 \frac{2^{n-1}}{(2\pi)^n}(n-1)!
 \Bigg(\sum_{n\in L\atop{j<n\atop |L|=q-1}}
 \frac{\Lambda_j\rho^2}{P^n} \overline{\omega}^l\wedge\Theta^{jL}
 +\sum_{n\notin L\atop |L|=q-1}\frac{1}{\gamma^{\ast}}
\frac{2\Phi}{P^n} \overline{\omega}^L\wedge \Theta^{nL}\\
\nonumber
&
 -\sum_{n\notin L\atop{j<n\atop |L|=q-1}}
\frac{\gamma^{\ast}}{\gamma} \frac{\Phi\Lambda_j\rho^2}
{\overline{\Phi}P^n}\overline{\omega}^{L}\wedge\Theta^{jL}\Bigg)\\
\nonumber
&-\sum_{n\notin L\atop{j<n \atop|L|=q-1}} c_{n,q-1}\sum_{\mu}
 {n-\mu-2\choose q-1}
(\gamma^{\ast})^2\frac{(\mu+1)\Lambda_j\rho^2}{\overline{\Phi}^{\mu+2}P^{n-\mu-1}}
\overline{\omega}^L\wedge\Theta^{jL}\\
\label{hq} & +\frac{1}{\gamma\gamma^{\ast}}\lra_{2}
\end{align}
for $q\ge 2$.
 The case $q=1$ has a similar expression.

 From (\ref{gnq}) and (\ref{gn-1}) we calculate
\begin{align}
\nonumber
 \vartheta_{\zeta}\lrg_q=&
 \frac{2^{n-1}(n-2)!}{(2\pi)^n} \sum_{n\in J\atop j,L}
 L_j\left(\frac{1}{P^{n-1}}\right)\varepsilon_J^{jL}
 \overline{\omega}^L\wedge\Theta^J\\
\nonumber
&-\sum_{n\notin J\atop j,L}c_{nq}\Bigg(
 \sum_{\mu}{n-\mu-2\choose q}\gamma^2 \frac{\mu+1}{n-\mu-2}
 L_j \left(\frac{1}{\overline{\Phi}^{\mu+2}P^{n-\mu-2}}\right)\\
\nonumber &+\frac{\gamma}{\gamma^{\ast}} {n-2\choose q}
L_j\left(\frac{2\Phi}{\overline{\Phi}P^{n-1}}\right)
 \Bigg) \varepsilon^{jL}_J\overline{\omega}^L\wedge\Theta^J
 +\frac{1}{\gamma^{\ast}}\lra_{2}\\
\nonumber =&\frac{2^{n-1}(n-1)!}{(2\pi)^n}\left( \sum_{n\in J\atop
L} \left(\sum_{j<n}
 \frac{\Lambda_j\rho^2}{P^{n}}
 \overline{\omega}^L\wedge\varepsilon_J^{jL}\Theta^J
\right)+\frac{1}{\gamma^{\ast}}\frac{2\Phi}{P^n}
\overline{\omega}^L\wedge\varepsilon_J^{nL}\Theta^J \right)\\
\nonumber
&-\sum_{n\notin J\atop j,L}c_{nq}\Bigg[
 \sum_{\mu}{n-\mu-2\choose q}\gamma^2
\left( \frac{(\mu+1)(\mu+2)}{n-\mu-2}
 \frac{\Lambda_j\rho^2}{\overline{\Phi}^{\mu+3}P^{n-\mu-2}}
 +\frac{(\mu+1)\Lambda_j\rho^2}{\overline{\Phi}^{\mu+2}P^{n-\mu-1}}\right)\\
&+\frac{\gamma}{\gamma^{\ast}} {n-2\choose q} (n-1)
\frac{2\Phi\Lambda_j\rho^2}{\overline{\Phi}P^{n}}
  \overline{\omega}^L\wedge\varepsilon^{jL}_J\Theta^J
\Bigg]+\frac{1}{\gamma}\lra_{2}+\frac{1}{\gamma^{\ast}}\lra_{2}.
\label{dg}
\end{align}
We use
\begin{equation*}
 \frac{\gamma}{\gamma^{\ast}}
 \frac{\Phi\Lambda_j\rho^2}{\overline{\Phi}P^{n}}
=\frac{\gamma^{\ast}}{\gamma}
 \frac{\Phi\Lambda_j\rho^2}{\overline{\Phi}P^{n}}
+ \frac{1}{\gamma}\lra_{2}+\frac{1}{\gamma^{\ast}}\lra_{2}
\end{equation*}
to compare (\ref{hq}) and (\ref{dg}), and to show (\ref{hq=dg})
holds if
\begin{align*}
 \sum_{n\notin L\atop{j<n \atop|L|=q-1}} c_{n,q-1}&\sum_{0\le\mu\le n-q-1}
 {n-\mu-2\choose q-1}
(\gamma^{\ast})^2\frac{(\mu+1)\Lambda_j\rho^2}{\overline{\Phi}^{\mu+2}P^{n-\mu-1}}
\overline{\omega}^L\wedge\Theta^{jL}=\\
&\sum_{n\notin J\atop j,L}c_{nq}
 \sum_{0\le\mu\le n-q-2}{n-\mu-2\choose q}\gamma^2
\Bigg[ \frac{(\mu+1)(\mu+2)}{n-\mu-2}
 \frac{\Lambda_j\rho^2}{\overline{\Phi}^{\mu+3}P^{n-\mu-2}}\\
&\qquad
+\frac{(\mu+1)\Lambda_j\rho^2}{\overline{\Phi}^{\mu+2}P^{n-\mu-1}}\Bigg]
 \overline{\omega}^L\wedge\Theta^{jL}+
 \frac{1}{\gamma}\lra_{2},
\end{align*}
which is an elementary computation.
 The proof of Theorem \ref{nkern} is complete.

\section{$Z$-operators and principal parts}
\label{zop}
 We generalize (slightly) the notion of an isotropic kernel (resp.
 operator).
 We let $\lre_{j-2n}^i(\zeta,z)$ be a
kernel of the form
\begin{equation*}
\lre_{j-2n}^i(\zeta,z)=
 \frac{\sigma_{m}(\zeta,z)}{\rho^{2k}(\zeta,z)} \qquad j\ge 1,
\end{equation*}
where $m-2k\ge j-2n$. We denote by $E_{j-2n}$ the corresponding
 operator.  The following theorem follows from
\cite{LiMi} (see Theorem VII.4.1).

\begin{thrm} \label{E1properties}
 The integral operators $E_{1-2n}$ are continuous from
\begin{equation*}
E_{1-2n}:L^p(D)\rightarrow L^s(D)
\end{equation*}
for any $1\le p\le s\le\infty$ with $1/s>1/p-1/2n$.
\end{thrm}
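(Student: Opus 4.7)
The plan is to reduce the assertion to the classical fractional integration estimate for kernels with an Euclidean-type singularity of order $2n-1$ on a bounded domain in $\mathbb{R}^{2n}$, and then to invoke the Hardy--Littlewood--Sobolev / Young's inequality machinery, precisely as carried out in \cite{LiMi}.

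First I would reduce the kernel to a pure power of $\rho$. By the defining property $|\sigma_m(\zeta,z)| \lesssim \rho(\zeta,z)^m$ of the $\sigma$-forms introduced in Section~\ref{admis}, the integrand satisfies
\begin{equation*}
\bigl|\lre_{1-2n}^i(\zeta,z)\bigr| = \frac{|\sigma_m(\zeta,z)|}{\rho(\zeta,z)^{2k}} \lesssim \rho(\zeta,z)^{m-2k},
\end{equation*}
and the constraint $m-2k \ge 1-2n$ combined with boundedness of $\rho$ on the compact set $\overline{D}\times\overline{D}$ yields the uniform majorization $|\lre_{1-2n}^i(\zeta,z)| \lesssim \rho(\zeta,z)^{1-2n}$. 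Thus it is enough to establish the boundedness claim for the scalar, symmetric kernel $K(\zeta,z)=\rho(\zeta,z)^{1-2n}$.

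Next, I would check the integrability of $K$ in either variable. Since $D\subset\subset \mathbb{C}^n$ is a bounded domain of real dimension $2n$ and $\rho$ is comparable to the Euclidean distance on compacta (a standard property of Riemannian distance functions associated with the Levi metric, away from the critical set of $r$, and handled by a direct covering argument near the finitely many Morse critical points on $\partial D$), one has
\begin{equation*}
\sup_{z\in D}\int_{D}\rho(\zeta,z)^{(1-2n)q}\,dV(\zeta) < \infty \quad\text{and}\quad \sup_{\zeta\in D}\int_{D}\rho(\zeta,z)^{(1-2n)q}\,dV(z) < \infty
\end{equation*}
for every $q < 2n/(2n-1)$, because the exponent $(2n-1)q$ is then strictly less than the real dimension $2n$. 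In other words, $K$ belongs to $L^q(D)$ in each variable for any such $q$.

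Finally, I would conclude via Young's inequality for integral operators: writing $1/s = 1/p + 1/q - 1$ and choosing $q<2n/(2n-1)$ to be any admissible exponent, one obtains exactly the range $1/s > 1/p - 1/(2n)$ with $1\le p\le s\le\infty$ stated in the theorem, together with the desired operator bound. The endpoint refinements needed to reach the full closed range (including $p=1$ or $s=\infty$) follow by the usual interpolation/duality arguments carried out in Theorem VII.4.1 of \cite{LiMi}, from which the present statement is a direct transcription. The main (and essentially only) technical point is the comparability of $\rho$ with the Euclidean distance uniformly across $D$; away from the critical points this is standard, and at each critical point it is verified in the local Morse coordinates~\eqref{rcoor}, where $\rho$ and $|\cdot|$ are manifestly equivalent on a small neighborhood.
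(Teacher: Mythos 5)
Your argument is correct, and it is essentially the standard proof behind the statement: the paper itself gives no proof, simply citing Theorem VII.4.1 of \cite{LiMi}, and your reduction to the majorant $\rho^{1-2n}$ followed by the generalized Young (Schur-type) inequality with $q<2n/(2n-1)$ is exactly the argument that reference supplies. One minor simplification: no special treatment of the Morse critical points is needed, since the Levi metric $\sum r_{i\bar j}\,d\zeta_i d\bar\zeta_j$ is positive definite there regardless of $\partial r$ vanishing, so $\rho$ is comparable to the Euclidean distance uniformly on $\overline{D}\times\overline{D}$ by smoothness and nondegeneracy of the metric alone.
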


 We denote by $Z_1$ those operators which are of the form
\begin{equation*}
Z_1=A_{1}+E_{1-2n}.
\end{equation*}
  $Z_2$ operators are defined
similarly,
 and we define $Z_j$, $j>2$, operators by induction to be
those operators of the form
\begin{equation*}
 Z_j=Z_{i_1}\circ\cdots\circ Z_{i_k}\qquad i_1+\cdots+i_k=j.
\end{equation*}

We have the following mapping properties for $Z_j$ operators:
\begin{prop}
\label{zmap}
 Let $p\ge 2$.
\begin{equation*}
Z_j:L^{p}(D)\rightarrow L^{q}(D)
\end{equation*}
where
\begin{equation*}
\frac{1}{q}>\frac{1}{p}-\frac{j}{2n+2}.
\end{equation*}
\end{prop}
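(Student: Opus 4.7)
The plan is to prove the proposition by induction on $j$. The substantive step is the base case $j=1$; the inductive step $j\ge 2$ reduces to composing two previously established bounds through a suitable intermediate exponent.

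\textbf{Base case $j=1$.} Write $Z_1=A_1+E_{1-2n}$ and treat the two summands separately. For the isotropic piece, Theorem \ref{E1properties} already gives $E_{1-2n}:L^p(D)\to L^q(D)$ for all $p,q$ with $1/q>1/p-1/(2n)$. Since $1/(2n)>1/(2n+2)$, the condition $1/q>1/p-1/(2n+2)$ is strictly stronger (cuts out a smaller set of $q$'s), so the required mapping for $E_{1-2n}$ is immediate. For the admissible piece, I would invoke the Henkin--Ram\'irez style kernel estimate: near the boundary diagonal $A_1$ has the local form \eqref{typerep}, and by the fundamental lower bound
\begin{equation*}
|\Phi(\zeta,z)| \gtrsim \rho^2(\zeta,z)+|r(\zeta)|+|r(z)|+|\mathrm{Im}\,\Phi(\zeta,z)|
\end{equation*}
one passes to the anisotropic coordinates $(\mathrm{Im}\,\Phi,\,r(\zeta),\,r(z),\,\text{tangential})$ in which $A_1$ looks like a weakly singular kernel of effective dimension $2n+2$. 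The extra ``$+2$'' beyond the isotropic $2n$ comes precisely from the additional $\mathrm{Im}\,\Phi$ direction controlled by $\Phi$ (and not by $\rho^2$), which is the classical source of the gain on strictly pseudoconvex domains. A Schur test (or a generalized Young inequality) against this anisotropic weight then yields the claimed $L^p \to L^q$ bound whenever $1/q > 1/p - 1/(2n+2)$. The $\xi_N\xi_M^\ast$ and $r^l r^{\ast m}$ factors are absorbed using $|\xi_N\xi_M^\ast|\lesssim \gamma^{N+M}\gamma^{\ast M+N}$, the admissibility condition $l+m\le t+1$, and the type formula, which together ensure that the resulting integrand is uniformly integrable; uniformity at Morse critical points of $r$ is preserved because those singularities have already been encoded in the $\gamma$-weights built into the type.

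\textbf{Inductive step $j\ge 2$.} Every $Z_j$-operator is, by definition, a composition $Z_{i_1}\circ\cdots\circ Z_{i_k}$ with $i_1+\cdots+i_k=j$; by associating we may write $Z_j=Z_i\circ Z_{j-i}$ for some $1\le i\le j-1$, both factors being $Z$-operators of smaller index. Given a target $q$ with $1/q>1/p-j/(2n+2)$, I choose an intermediate exponent $\tilde p$ with
\begin{equation*}
\frac{1}{\tilde p}\in\bigl(\max\{0,\tfrac{1}{p}-\tfrac{j-i}{2n+2}\},\ \tfrac{1}{q}+\tfrac{i}{2n+2}\bigr);
\end{equation*}
this interval is non-empty precisely because $1/q>1/p-j/(2n+2)$. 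Since $p\ge 2$ means $1/p\le 1/2$, the lower endpoint is at most $1/2$, so the interval meets $(0,1/2]$ and I can in addition require $\tilde p\ge 2$. The inductive hypothesis then gives $Z_{j-i}:L^p\to L^{\tilde p}$ and $Z_i:L^{\tilde p}\to L^q$, and composition yields $Z_j:L^p\to L^q$ with the desired loss of $j/(2n+2)$ in the reciprocal exponent.

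\textbf{Main obstacle.} The real work lies in the base case, namely the admissible-kernel estimate with the improved ``$2n+2$'' exponent. All the delicacy of the non-smooth HL geometry is concentrated there: one must verify that the standard anisotropic integration argument of Phong--Stein type goes through uniformly up to the Morse critical points, keeping track of the $\gamma$-weights and the subtle balance between $\Phi$, $P$, and the $\sigma_j$ factors encoded in Definition of admissible kernels. The induction step, by contrast, is a routine exponent-chasing exercise once the base case is in hand.
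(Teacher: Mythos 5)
The paper offers no proof of this proposition at all: it simply records the mapping property and defers to the ``well-established properties'' of admissible and isotropic operators in \cite{LiMi} and \cite{EhLi}. Your outline is therefore a reconstruction of the standard argument rather than an alternative to anything in the text, and its skeleton is sound: the treatment of $E_{1-2n}$ via Theorem \ref{E1properties} is correct (the isotropic gain $1/(2n)$ is indeed better than the required $1/(2n+2)$), and the exponent-chasing in the inductive step is valid, including the check that the intermediate exponent can be taken $\ge 2$ because $j-i\ge 1$ forces the lower endpoint of your interval strictly below $1/2$. Two caveats. First, a structural one: $Z_2$ is defined directly as $A_2+E_{2-2n}$, not as a composition $Z_1\circ Z_1$, so your induction needs \emph{both} $j=1$ and $j=2$ as base cases; the same anisotropic estimate for type-$2$ admissible kernels covers it, but as written your inductive step silently assumes every $Z_j$ with $j\ge2$ factors through smaller indices. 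Second, the entire substance of the proposition lives in the base case you only sketch: the claim that a type-$1$ (resp.\ type-$2$) admissible kernel in the sense of \eqref{typerep} --- with the $\xi_N\xi_M^{\ast}$ factors, the balance $l+m\le t+1$, and the $\min\{2,\,t-l-m,\,N+M\}$ term in the type formula --- gains exactly $1/(2n+2)$ (resp.\ $2/(2n+2)$) uniformly up to the Morse critical points where $\gamma$ vanishes. Your appeal to an anisotropic Schur test via the lower bound on $|\Phi|$ names the right mechanism, but carrying it out in the HL setting is precisely the content of \cite{EhLi}, and you correctly identify it as the unproved core. So: right approach, one fixable structural slip, and the hard estimate left as a citation-shaped hole --- which is, to be fair, exactly how the paper itself handles it.
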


We will also use the following property which commutes factors of
$\gamma$ with $Z$ operators.
\begin{lemma}
\label{gamcom}
 Let $m\ge 0$, $k\ge 1$.
\begin{equation*}
\gamma^m Z_k = Z_k\circ \gamma^m + Z_{k+1}.
\end{equation*}
\end{lemma}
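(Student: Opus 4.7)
The plan is to induct on $k$, reducing to the base cases $k = 1, 2$, where the commutator can be analyzed at the kernel level. The case $m = 0$ is trivial. For $k \ge 3$, every $Z_k$ is by definition a composition $Z_{i_1} \circ \cdots \circ Z_{i_l}$ with $\sum_a i_a = k$ and each $i_a < k$. Commuting $\gamma^m$ through the factors one by one using the induction hypothesis gives
\begin{equation*}
\gamma^m Z_{i_1}\cdots Z_{i_l} = Z_{i_1}\cdots Z_{i_l}\gamma^m + \sum_{a=1}^{l} Z_{i_1}\cdots Z_{i_{a-1}} Z_{i_a + 1} Z_{i_{a+1}}\cdots Z_{i_l},
\end{equation*}
and each correction term is a composition whose indices sum to $k + 1$, hence a $Z_{k+1}$-operator by definition.

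For the base cases $k = 1, 2$, I would decompose $Z_k = A_k + E_{k-2n}$ and handle each summand at the kernel level. For an integral operator $K$ with kernel $\mathcal{K}(\zeta, z)$, the definition $K f(z) = \int_\zeta f(\zeta) \wedge *_\zeta \overline{\mathcal{K}(\zeta, z)}$ together with the fact that $\gamma$ is real shows that the commutator $\gamma^m K - K \gamma^m$ has kernel $(\gamma^m(z) - \gamma^m(\zeta))\mathcal{K}(\zeta, z)$. Writing
\begin{equation*}
\gamma^m(z) - \gamma^m(\zeta) = (\gamma(z) - \gamma(\zeta)) \sum_{i=0}^{m-1} \gamma^i(z) \gamma^{m-1-i}(\zeta),
\end{equation*}
and invoking the identity $\gamma(z) - \gamma(\zeta) = \sigma_1$ used throughout Section \ref{prelim} (cf.\ the proof of Lemma \ref{lp}, where it is written as $\gamma(\zeta) = \gamma(z) + \sigma_1$), the commutator kernel acquires one extra $\sigma_1$ factor, while the sum is absorbed into the $\xi_N \xi_M^{\ast}$ slots of the admissible representation \eqref{typerep}. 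This raises $\sigma_j$ to $\sigma_{j+1}$, increasing the type by one: an admissible type-$1$ kernel becomes type $2$, and an isotropic kernel of type $1 - 2n$ becomes type $2 - 2n$, so the commutator is a $Z_2$-operator, as required.

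The main obstacle will be the rigorous treatment of $\gamma(z) - \gamma(\zeta)$ as a $\sigma_1$ near the critical points of $r$, where $\gamma = |\partial r|$ is only H\"{o}lder-$\tfrac12$ although $\gamma^2$ is smooth. To bypass this, I would use the identity $(\gamma(z) + \gamma(\zeta))(\gamma(z) - \gamma(\zeta)) = \gamma^2(z) - \gamma^2(\zeta)$ and work with $\gamma^2$, which is genuinely smooth in both variables, absorbing one copy of $\gamma(z) + \gamma(\zeta)$ out of the sum $\sum_i \gamma^i(z)\gamma^{m-1-i}(\zeta)$ to cancel the denominator that appears. Checking that the resulting $\xi$-counts $N, M$ in \eqref{typerep} stay non-negative and that the derivative conditions defining $\sigma_{j+1}$ are inherited from the smoothness of $\gamma^2$ is tedious but routine, and constitutes the only real content beyond the algebraic reductions above.
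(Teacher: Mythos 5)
Your argument is correct and is essentially the paper's own proof, which consists of the single observation $\gamma^m(z)=\gamma^m(\zeta)+\sigma_1$ (plus the tacit remark that an extra $\sigma_1$ raises the type by one); your induction over compositions and the kernel-level commutator computation merely make explicit what the paper leaves implicit. The only quibble is your last paragraph: since $r$ is Morse, $\gamma^2$ vanishes to exactly second order at its critical points, so $\gamma$ is Lipschitz with $|\gamma^{\alpha}D^{\alpha}\gamma|\lesssim\gamma$ (i.e.\ $\gamma=\xi_1$), not merely H\"{o}lder-$\tfrac{1}{2}$, and the paper already uses $\gamma-\gamma^{\ast}=\sigma_1$ directly in the proof of Lemma \ref{lp}, so the detour through $\gamma^2$ is unnecessary.
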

\begin{proof}
 The proof follows from the relation
 \begin{equation*}
 \gamma^m(z)=\gamma^m(\zeta)+\sigma_1.
\end{equation*}
\end{proof}

Let $A$ be one of the operators $N_q$, $\mdbar N_q$ and
$\mdbar^{\ast} N_q$ which arise in the \dbar-Neumann problem.  We
are going to describe $A$ in terms of $Z$-operators; this will
show that its continuity properties in weighted $L^p$ spaces
coincide with the behavior of $Z$-operators (although $A$ itself
is not a $Z$-operator).  To proceed we need some more definitions.
\begin{defn} Let $m\le k$ be nonnegative integers.  An operator
\begin{equation*}
C=Z_m +\sum_{\alpha} Z_k^{\alpha}\circ K_{\alpha},
\end{equation*}
where the $Z_j$ are $Z$-operators of type $\ge j$, and the
$K_{\alpha}$ are $L^2$-bounded, is called a $k$-asymptotic
$Z$-operator of type $\ge m$.  We shall denote a generic
$k$-asymptotic $Z$-operator of type $m$ by $C_m^{(k)}$.
\end{defn}

\begin{defn}
Let $A^0$ be an $L^2$-bounded operator.  $A^0$ is a generalized
$Z$-operator if there is an integer $l\ge 0$ such that $\gamma^l
A^0$ is a $Z$-operator.  If $l$ is chosen minimal with that
property, then the type of $\gamma^l A^0$ is called the type of
$A^0$.
\end{defn}
Note that a $Z_m$-operator is $k$-asymptotic $Z$ for any $k$ and
is also a generalized $Z$-operator of type $\ge m$: the integer
$l$ must be 0.

\begin{defn} Let $A$ be an $L^2$-bounded operator.  An
$L^2$-bounded operator $A^0$ is called a {\it principal part} of
$A$, if
\begin{enumerate}
\item[i)] $A^0$ is a generalized $Z$-operator of type $m$,
 \item[ii)] for each $l$, there is an $L$ and an $l$-asymptotic
 $Z$-operator of type $\ge m+1$, $C_{m+1}^{(l)}$, such that
\begin{equation*}
\gamma^L A= \gamma^L A^0 +C^{(l)}_{m+1}.
\end{equation*}
\end{enumerate}
\end{defn}

 It now follows
\begin{thrm}
Let $A$ be an operator with principal part $A^0$ of type $m=1$ or
2.  Let $p\ge 2$ be given.  Then there is an $L$ such that
\begin{equation*}
\gamma^L A: L^p(D)\rightarrow L^q(D),
\end{equation*}
continuously, with
\begin{equation*}
\frac{1}{q}> \frac{1}{p} -\frac{m}{2n+2}.
\end{equation*}
\end{thrm}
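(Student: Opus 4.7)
The plan is to invoke the decomposition supplied by the definition of principal part and show each piece maps $L^p$ continuously into the required $L^q$. Fix $p\ge 2$ and $q$ with $1/q > 1/p - m/(2n+2)$; the task is to produce a single $L$ that works.

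First I would choose the parameter $l$ entering the principal-part definition large enough to control the forthcoming error terms. Concretely, pick an integer $l\ge m+1$ with
\[
\frac{1}{q} > \frac{1}{2} - \frac{l}{2n+2},
\]
which is possible since the right-hand side can be made negative by taking $l$ large. With this $l$ fixed, the principal-part hypothesis furnishes an integer $L$ and a representation
\[
\gamma^L A = \gamma^L A^0 + C^{(l)}_{m+1}, \qquad C^{(l)}_{m+1} = Z_{m+1} + \sum_{\alpha}Z_l^{\alpha}\circ K_{\alpha},
\]
with the $K_\alpha$ being $L^2$-bounded. Enlarging $L$ if necessary, I also assume $L\ge l_0$, where $l_0$ is the minimal integer with $\gamma^{l_0}A^0$ a $Z$-operator of type $m$.

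The principal-part piece is dispatched by writing $\gamma^L A^0 = \gamma^{L-l_0}\cdot(\gamma^{l_0}A^0)$: the second factor is a $Z_m$-operator, hence $L^p\to L^q$ by Proposition \ref{zmap}, while multiplication by the bounded function $\gamma^{L-l_0}$ preserves $L^q$. The $Z_{m+1}$ summand of $C^{(l)}_{m+1}$ already maps $L^p\to L^q$ by Proposition \ref{zmap}, since any $q$ satisfying $1/q > 1/p - m/(2n+2)$ also satisfies the weaker constraint $1/q > 1/p - (m+1)/(2n+2)$. Each composite $Z_l^{\alpha}\circ K_\alpha$ is handled by factoring through $L^2$: since $D$ is bounded and $p\ge 2$, Hölder yields the continuous embedding $L^p(D)\hookrightarrow L^2(D)$; then $K_\alpha:L^2\to L^2$ by hypothesis, and Proposition \ref{zmap} combined with our choice of $l$ gives $Z_l^{\alpha}:L^2\to L^q$. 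Summing the pieces delivers the asserted mapping.

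The only genuine subtlety, and the ``main obstacle'' in the sense of having to think before computing, is the order of quantifiers: one must first choose $l$ according to the prescribed target exponent, and only afterwards invoke the principal-part definition to obtain $L$. Once this ordering is arranged, the proof is essentially bookkeeping: Proposition \ref{zmap}, boundedness of $\gamma$ on $\overline{D}$, and the Hölder embedding on the bounded domain $D$ do the real work. Lemma \ref{gamcom} is available as an alternative if one prefers to commute the $\gamma$-factor through the $Z$-operator rather than absorbing it as a bounded multiplier.
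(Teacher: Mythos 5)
Your proof is correct and takes the route the paper intends but leaves entirely implicit: the paper's whole argument is the single sentence ``This follows from the well-established properties of $Z$-operators of positive type,'' and your write-up supplies the missing bookkeeping (choose $l$ first from the target exponent, then invoke the principal-part definition to get $L$, then treat $\gamma^L A^0$, $Z_{m+1}$, and $Z_l^{\alpha}\circ K_{\alpha}$ separately via Proposition \ref{zmap} and the embedding $L^p(D)\hookrightarrow L^2(D)$). Your observation about the order of quantifiers is exactly the point the paper glosses over, and your handling of it is sound.
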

This follows from the well-established properties of $Z$-operators
of positive type.

In general, $A$ does not have a principal part.  If $A$ itself is
a generalized $Z$-operator, it is, naturally, its own principal
part.  We do not claim that principal parts are unique - in fact,
they are not.  However, we do have the following theorem regarding
principal operators which tells that the type of a principal part
of $A$ is a property of $A$.
\begin{thrm}
Let $A$ be an operator with principal part $A^0$ of type $m$.
Then $A^0$ is
 unique modulo generalized
$Z$-operators of type $m+1$.
\end{thrm}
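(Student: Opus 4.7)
The plan is to apply the defining identity of a principal part twice and subtract.  Suppose $A^1$ is another principal part of $A$, which by hypothesis is also of type $m$.  Then for every integer $l\ge 1$ there exist integers $L_0=L_0(l)$ and $L_1=L_1(l)$, together with $l$-asymptotic $Z$-operators $C_{m+1}^{(l,0)}$ and $C_{m+1}^{(l,1)}$ of type $\ge m+1$, such that
\begin{equation*}
\gamma^{L_0}A=\gamma^{L_0}A^0+C_{m+1}^{(l,0)},\qquad \gamma^{L_1}A=\gamma^{L_1}A^1+C_{m+1}^{(l,1)}.
\end{equation*}

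Next I would harmonize the two powers of $\gamma$.  Put $L=\max(L_0,L_1)$ and multiply each identity through by $\gamma^{L-L_i}$.  Applying Lemma \ref{gamcom} to each $Z_j$-factor of the $l$-asymptotic expansion, the product $\gamma^{L-L_i}C_{m+1}^{(l,i)}$ remains an $l$-asymptotic $Z$-operator of type $\ge m+1$: each $\gamma$ passed across a $Z_j$ introduces only an additional $Z_{j+1}$, which is absorbed either into the principal $Z_{m+1}$-term or into the tail $\sum Z_l^{\alpha}\circ K_{\alpha}$ (the extra power of $\gamma$, multiplied into $K_{\alpha}$, remains $L^2$-bounded).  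Subtracting the two harmonized identities yields
\begin{equation*}
\gamma^L(A^0-A^1)=\widetilde{C}_{m+1}^{(l,1)}-\widetilde{C}_{m+1}^{(l,0)},
\end{equation*}
itself an $l$-asymptotic $Z$-operator of type $\ge m+1$, and this holds for every $l$.

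Choosing one such $l$ (say $l=m+1$) and the corresponding $L$ displays $\gamma^L(A^0-A^1)$ as the sum of a bona fide $Z$-operator of type $\ge m+1$ and a tail of compositions of higher-type $Z$-operators with $L^2$-bounded factors.  In the language of Section \ref{zop} this places $A^0-A^1$ into the class of generalized $Z$-operators of type $\ge m+1$, which is the content of the theorem.  The main place where care is required is the harmonization step: one must verify that commuting the extra $\gamma$-factors past all of the $Z$-factors and $L^2$-bounded factors in an asymptotic $Z$-operator produces only terms of strictly higher $Z$-type.  This is exactly the mechanism of Lemma \ref{gamcom}, and once it is in place the claim that principal parts are unique modulo operators of type $\ge m+1$ reduces to formal subtraction.
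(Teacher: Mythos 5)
Your formal subtraction reproduces the first half of the paper's argument (their equations (\ref{start})--(\ref{zm})), but it stops exactly where the real work begins, and the final inference is not valid as stated. What the subtraction gives you is an identity of the form
\begin{equation*}
\gamma^{M}\bigl(A^0-A^1\bigr) \;=\; Z_{m+1} \;+\; \sum_{\alpha} Z_{j}^{\alpha}\circ K_{\alpha},
\end{equation*}
i.e.\ an \emph{asymptotic} $Z$-operator of type $\ge m+1$. But the theorem asserts that $A^0-A^1$ is a \emph{generalized} $Z$-operator of type $m+1$, which by definition means that $\gamma^{M}(A^0-A^1)$ is itself a single $Z$-operator of type $\ge m+1$. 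These are different classes: the tail $\sum_{\alpha}Z_{j}^{\alpha}\circ K_{\alpha}$ involves compositions with arbitrary $L^2$-bounded operators $K_{\alpha}$, and such compositions are not $Z$-operators. Your closing sentence ("this places $A^0-A^1$ into the class of generalized $Z$-operators of type $\ge m+1$") therefore conflates the two notions; choosing a particular $l$ does not make the tail disappear or turn it into an admissible kernel.

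Note that $\gamma^{M}(A^0-A^1)$ \emph{is} automatically a $Z$-operator — it is a difference of two $Z$-operators of type $\ge m$ — so the issue is purely whether its type is $\ge m+1$ rather than exactly $m$. The paper resolves this by contradiction: if the difference were genuinely of type $m$, its kernel would obey a lower bound $|\zeta-z|^{-(n+1-(m+1)/2)}\lesssim|\mathscr{A}|$, forcing some derivative $D^s$ of order $s\le m+2$ to fail to map $L^2$ into $L^{\infty}$; on the other hand, since the identity holds with $j$ arbitrarily large, the right-hand tail (after differentiating under the integral) \emph{does} map $L^2$ into $L^{\infty}$ for all such $D^s$. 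This mapping-property/kernel-singularity comparison is the heart of the proof and is absent from your proposal. (A minor additional point: the paper allows the second principal part to have type $m'\ge m$ rather than assuming both have type exactly $m$; this costs nothing but should be tracked.)
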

\begin{proof}
 By hypothesis, the type of $A^0$ is $m$.  Suppose further that $A$ can
 also be written with a principal part $B^0$ of type $m'\ge m$.
By definition, for each $k,l$, we can find an $K,L$ such that
\begin{align*}
&\gamma^K A= \gamma^K A^0 +C^{(k)}_{m+1}\\
&\gamma^L A= \gamma^L B^0 +C^{(l)}_{m'+1}.
\end{align*}
Let $M=\max (K,L)$.  Then
\begin{equation}
\label{start}
 \gamma^M B^0 +C^{(l)}_{m'+1}=\gamma^M A^0
+C^{(k)}_{m+1}.
\end{equation}
The terms
 $C^{(l)}_{m'+1}$ and $C^{(k)}_{m+1}$ above themselves may be
 written as
\begin{align}
 \label{co1}
&C^{(l)}_{m'+1}=Z_{m'+1} + \sum_{\alpha} Z_l^{\alpha}\circ
K_{\alpha},\\
\label{co2}
 &C^{(k)}_{m+1}=Z_{m+1} + \sum_{\alpha}
Z_k^{\alpha}\circ K_{\alpha}.
\end{align}

Insert (\ref{co1}) and (\ref{co2}) into
 (\ref{start}) to get
\begin{equation*}
 \gamma^M B^0 +Z_{m'+1} + \sum_{\alpha} Z_l^{\alpha}\circ
K_{\alpha}=\gamma^M A^0 +Z_{m+1} + \sum_{\alpha} Z_k^{\alpha}\circ
K_{\alpha}
\end{equation*}
then rearrange to get
\begin{equation}
 \label{rearrange}
 \gamma^M B^0 +Z_{m'+1} - \gamma^M A^0 -Z_{m+1}
 =  \sum_{\alpha} Z_k^{\alpha}\circ K_{\alpha} -  \sum_{\alpha} Z_l^{\alpha}\circ
K_{\alpha}
\end{equation}
The left hand side is an operator of type $m$, and the right hand
side is a $j=\min(k,l)$ - asymptotic operator.

Therefore (\ref{rearrange}) is of the form
\begin{equation}
 \label{zm}
\gamma^M A^j_m = C_{j} ^{(j)},
\end{equation}
where $j=\min(k,l)$, and $A^j_m$ is a $Z$-operator of type $m$. We
note that the operator $A^j_m$ may change with different $j$.

The idea is to show that
 $\gamma^M B^0  - \gamma^M A^0 $, and therefore $\gamma^M A^j_m$,
  is an operator of type $m+1$.  If
 we suppose that it is not, we arrive at a contradiction
 by showing some property of $C_{j} ^{(j)}$ is not exhibited by
$\gamma^M A^j_m$.

Now choose $j$ sufficiently large, and an appropriately large
 $M$ in (\ref{zm}), such that for $s\le m+2$ and all differential
 operators, $D^s$ of order $s$, we have
\begin{equation*}
D^s C_{j} ^{(j)}:L^2(D) \rightarrow L^{\infty}(D),
\end{equation*}
as can be seen by differentiating under the integral.

 Under the assumption that $\gamma^M A^j_m$
is not of type $m+1$,
 we can show there is a differential
operator, $D^s$, of order $s\le m+2$, such that
\begin{equation}
 \label{ds}
D^s A^j_m:  L^2(D) \nrightarrow L^{\infty}(D),
\end{equation}
contradicting (\ref{zm}).

  Since we have the option
of multiplying (\ref{zm}) by factors of $\gamma$, we will ignore
all factors of $\gamma$ which arise in the kernels or by
differentiating such kernels.

 We first note that, modulo factors of $\gamma$,
\begin{equation*}
|\lra_m^j|\lesssim \frac{1}{|\zeta-z|^t}
\end{equation*}
for some integral $t$.
 From (\ref{zm}) we must have, modulo factors of $\gamma$,
 \begin{equation*}
|\lra_m^j| \lesssim \frac{1}{|\zeta-z|^{n}}
\end{equation*}
since, otherwise $A_m^j$ applied to the function
 $1/|\zeta-\zeta_0|^{n-1}$,
 for $\zeta_0\in D$, does not land in $L^{\infty}(D)$, whereas the
 right hand side of (\ref{zm}) applied to the same function is in
 $L^{\infty}(D)$.

 Also, from our assumption that $\gamma^M A^j_m$ is not of type
 $m+1$, and from
  the examination of operators of type
 $m+1$, we have
 \begin{equation}
 \label{lower}
\frac{1}{|\zeta-z|^{n+1-(m+1)/2}} \lesssim |\lra_m^j|.
\end{equation}

If there is no such $D^s$, for $s\le m+2$, for which (\ref{ds})
holds then, since $D^s \lra_m^j$ is bounded by an integer power of
$|\zeta-z|$, we must have
\begin{equation}
\label{alld} |D^s \lra_m^j| \lesssim \frac{1}{|\zeta-z|^{n}}
\end{equation}
for all differential operators of order $s\le m+2$.
 But then (\ref{alld}) implies
\begin{equation*}
|\lra_m^j| \lesssim \frac{1}{|\zeta-z|^{n-(m+2)}},
\end{equation*}
which contradicts (\ref{lower}).
\end{proof}

\section{Integral representations}
\label{repprin}

From \cite{Eh10} we have the explicit version of Theorem
\ref{lrethrm}:
\begin{thrm}  Let $f\in L^2_{0,q}(D)\cap
\mbox{Dom}(\mdbar^{\ast})\cap\mbox{Dom}(\mdbar)$ .  For $1\le q\le
n-2$,
 \label{bir}
\begin{align*}
f_{}(z)=&(\mdbar f_{},\lrt^{}_q)
+(\mdbar_{}^{\ast}f_{},(\lrt^{}_{q-1})^{\ast}) \nonumber
 +
 \left(\mdbar f_{},\frac{1}{\gamma^{\ast}}\lra_{2}^{} + \lre_{2-2n}\right)
 +(\mdbar_{}^{\ast} f_{},\lre_{2-2n})\\
 &+\left(f_{},
\frac{1}{\gamma^{\ast}}\lra_{1}^{}+
\frac{1}{\gamma\gamma^{\ast}}\lra_{2}^{}+
 \lre_{1-2n}\right)
 .
\end{align*}
\end{thrm}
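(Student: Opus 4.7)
The plan is to derive this representation as the explicit computational version of Theorem \ref{lrethrm}, by combining the Bochner--Martinelli--Koppelman homotopy formula with the parametrix identity for $\Gamma_{0q}$ on the smooth exhaustion $D_\epsilon=\{r<-\epsilon\}$, and then passing to the limit $\epsilon\to 0$ in the appropriate weighted $L^p$ sense. Fix $\epsilon>0$ and work on $D_\epsilon$, which, away from the finitely many critical points of $r$ (handled by a cut-off and an additional limit argument), is smoothly bounded and strictly pseudoconvex with Levi metric agreeing near $\partial D_\epsilon$ with the one fixed on $D$.

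On $D_\epsilon$, the classical Henkin--Ramírez--Koppelman identity built from the forms $\alpha,\beta$ of Section \ref{prelim} and the parametrix identity $I=\mdbar\mdbar^{\ast}\Gamma_{0q}+\mdbar^{\ast}\mdbar\Gamma_{0q}+\lre_{1-2n}$ combine to give a homotopy formula in which $f$ is written as $\mdbar$ applied to an $\lrl_q$-type integral of $f$, plus an $\lrl_q$-type integral of $\mdbar f$, plus the dual $\lrl_{q-1}^{\ast}$-integrals of $f$ and $\mdbar^{\ast}f$, plus boundary terms. Reorganizing by moving the derivatives off the kernels so they land on $f$, and using the definition (\ref{tqdef}), one recognizes the principal terms $(\mdbar f,\lrt_q)$ and $(\mdbar^{\ast}f,\lrt_{q-1}^{\ast})$. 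The boundary integrals on $\partial D_\epsilon$ vanish in the limit because $f\in\mathrm{Dom}(\mdbar^{\ast})$ satisfies the $\mdbar$-Neumann condition $\ast f\wedge\partial r\equiv 0$ on $\partial D$, while the parametrix defect contributes the $\lre_{1-2n}$ and $\lre_{2-2n}$ isotropic terms paired with $f$, $\mdbar f$, $\mdbar^{\ast}f$.

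The remaining task is to identify the non-isotropic errors. These arise (i) when $\mdbar$ or $\mdbar^{\ast}$ is moved past the kernels $\lrl_q,\lrl_{q-1}$, and (ii) when the orthonormal frame derivations $L_j,\Lambda_j$ are replaced by the flat derivations $\partial/\partial\zeta_j,\partial/\partial z_j$ via (\ref{lamsym}). Each such correction produces at most one factor of $\xi_{-1}$ or $\xi_{-1}^{\ast}$ times an $\lre_j$ or $\sigma_j$, and is systematically reduced to the required weighted admissible form using Lemmas \ref{phisymm}, \ref{lphi}, \ref{lp} and the calculations carried out in the proof of Lemma \ref{lemmalq} and Theorem \ref{nkern}. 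The outcome is precisely the three weighted admissible families $\frac{1}{\gamma^{\ast}}\lra_1$, $\frac{1}{\gamma^{\ast}}\lra_2$, $\frac{1}{\gamma\gamma^{\ast}}\lra_2$ paired with $f$, $\mdbar f$, $\mdbar^{\ast}f$ as stated. Passing $\epsilon\to 0$ is then justified by dominated convergence in the weighted $L^p$ norms, uniform control being provided by Theorem \ref{E1properties} and Proposition \ref{zmap} because every error operator has strictly positive admissible type.

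The main obstacle is the error bookkeeping in the step just described: one must verify that no term of lower type than $\lra_1$, and no term with a worse weight than $\frac{1}{\gamma\gamma^{\ast}}$, ever appears. The delicate cancellations are exactly those encountered in the verification of (\ref{verify}) and in the manipulations following (\ref{newerror}), where a single term must be rewritten in two ways — once using $\mdbar_\zeta P=\lre_1+\xi_0\frac{r^{\ast}}{\gamma^{\ast}}$ and once using $(\frac{r}{\gamma})^2=P+\lre_2+\sigma_1\frac{r}{\gamma}$, together with the symmetrization $\Phi=\Phi^{\ast}+\lre_3$ and the identity $\frac{rr^{\ast}}{\gamma\gamma^{\ast}}=P+\lre_2$ — in order to show the apparent singularity is actually a gain of type. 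Once one sees that the same tricks govern the entire computation, the representation follows as an extended version of the calculation already performed for Theorem \ref{nkern}, and the result coincides with the integral representation established in \cite{Eh10}.
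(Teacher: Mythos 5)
The paper does not prove this theorem at all: it is imported verbatim from \cite{Eh10} with the single sentence ``From \cite{Eh10} we have the explicit version of Theorem \ref{lrethrm}.'' So there is no internal proof to compare against; your outline should be judged as a reconstruction of the argument of that reference, and at that level it points in the right direction (Koppelman-type homotopy formula built from $\alpha,\beta$ plus the parametrix identity for $\Gamma_{0q}$ on the exhaustion $D_\epsilon$, followed by the limit $\epsilon\to 0$).

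As a standalone proof, however, two steps are asserted rather than established. First, the boundary-term argument: for fixed $\epsilon$ the form $f$ satisfies no boundary condition on $\partial D_\epsilon$, and membership in $\mathrm{Dom}(\mdbar^{\ast})$ is an $L^2$ condition that yields the pointwise identity $\ast f\wedge\partial r=0$ on $\partial D$ only after a density argument for smooth forms; moreover the passage $\epsilon\to 0$ degenerates precisely at the Morse critical points of $r$ where $\gamma\to 0$, which is where the $\gamma$-weights in the statement originate --- ``dominated convergence in weighted $L^p$'' presupposes the uniform weighted bounds that are the actual content of \cite{Eh10}, so the argument as written is circular there. Second, the error bookkeeping: you defer it to Lemmas \ref{phisymm}--\ref{lp} and the computations in Lemma \ref{lemmalq} and Theorem \ref{nkern}, but in this paper those are used \emph{downstream} of Theorem \ref{bir} (to compute $\vartheta_{\zeta}\lrl_q-\partial_z\lrl_{q-1}$ and to build $\lrn_q$), not to produce the specific weight distribution in the statement --- namely that the $\lra_2$ term paired with $\mdbar f$ carries only $1/\gamma^{\ast}$ while the $\lra_2$ term paired with $f$ carries $1/(\gamma\gamma^{\ast})$, and that the $\mdbar^{\ast}f$ pairing has no admissible error at all, only an isotropic $\lre_{2-2n}$. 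That asymmetry is exactly what a proof must verify and your sketch does not.
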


From now on we work with the case $q<n-2$.  The case $q=n-2$ is
somewhat exceptional and can be handled as in \cite{LR87}.  We do
not pursue that case here.

From Theorem \ref{nkern}
 we can write this in terms of $\lrn_q^{}$ as
\begin{align}
\nonumber
 f_{}(z)=&(\mdbar
f_{},\mdbar\lrn^{}_q) +(\mdbar_{}^{\ast}f_{},
\mdbar^{\ast}_{}\lrn^{}_{q})
\\
\nonumber
 &
 +
 \left(\mdbar f_{},\frac{1}{\gamma\gamma^{\ast}}\lra_{2}^{}
 + \lre_{2-2n}\right)
 +\left(\mdbar_{}^{\ast} f_{},
\frac{1}{\gamma\gamma^{\ast}}\lra_{2}^{}
 + \lre_{2-2n}\right)\\
 \label{starthere}
 &
 +\left(f_{},
\frac{1}{\gamma^{\ast}}\lra_{1}^{}+
\frac{1}{\gamma\gamma^{\ast}}\lra_{2}^{}+
 \lre_{1-2n}\right)
,
\end{align}
where the $\lra_{2}^{}$ kernels are such that
\begin{equation*}
 \mdbar\lra_{2}^{}=\lra_{1}^{}+\frac{1}{\gamma}\lra_{2}^{}.
\end{equation*}

\begin{lemma}
\label{gamma3}
\begin{align*}
& i)\ \gamma^3 f = \gamma^{\ast} (\gamma^2 \square f, \lrn_q) +
Z_2 \mdbar f
 +Z_2 \mdbar^{\ast} f + Z_1 f\\
& ii)\ \gamma^3 \mdbar f = Z_1 \gamma^2 \square f + Z_1  \mdbar f
 + Z_1 \mdbar^{\ast} f \\
& iii)\ \gamma^3 \mdbar^{\ast} f = Z_1 \gamma^2 \square f + Z_1
\mdbar f
 + Z_1 \mdbar^{\ast} f.
\end{align*}
\end{lemma}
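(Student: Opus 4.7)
The starting point is the integral representation \eqref{starthere}, which expresses $f(z)$ as a sum of five terms. I would attack each part in turn, always multiplying by the appropriate power of $\gamma$ and tracking the resulting type via the calculus of Section \ref{admis}, the commutation Lemma \ref{gamcom}, and the identities in Lemmas \ref{phisymm}, \ref{lphi}, and \ref{lp}.

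For (i), the plan is to combine the two leading terms $(\mdbar f, \mdbar \lrn_q) + (\mdbar^{\ast} f, \mdbar^{\ast} \lrn_q)$ via Green's formula into $(\square f, \lrn_q)$. The boundary contribution vanishes: on the one hand, Theorem \ref{nkern} provides $\ast_\zeta \lrn_q|_{\partial D} = 0$; on the other, $f \in \mathrm{Dom}(\square)$ implies $\mdbar f \in \mathrm{Dom}(\mdbar^{\ast})$ and $\mdbar^{\ast} f \in \mathrm{Dom}(\mdbar)$, so the complementary Neumann conditions hold. After multiplication by $\gamma^3(z)$, one writes $\gamma^3(z) = \gamma(z)\gamma^2(\zeta) + \gamma(z)\sigma_1$ to pull the $\gamma^2$-factor inside the integral, producing the stated main term $\gamma^{\ast}(\gamma^2 \square f, \lrn_q)$ modulo an error absorbable into the operator classes $Z_1$ and $Z_2$. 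The three remaining error terms of \eqref{starthere}, multiplied by $\gamma^3$ and processed with Lemma \ref{gamcom}, give exactly $Z_2 \mdbar f + Z_2 \mdbar^{\ast} f + Z_1 f$: the $\frac{1}{\gamma\gamma^{\ast}}\lra_2$ kernels paired with $\mdbar f$ or $\mdbar^{\ast} f$ yield type-$2$ operators once the $\gamma^3$ absorbs both weights, while the kernels of type $1$ paired with $f$ land in $Z_1$.

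For (ii) and (iii), apply $\mdbar_z$ (respectively $\mdbar_z^{\ast}$) to \eqref{starthere} and then multiply by $\gamma^3$. Moving the derivative through the integral lands it on the kernel, where the key inputs are
\[
\mdbar_z \mdbar_\zeta \lrn_q, \qquad \mdbar_z \mdbar^{\ast}_\zeta \lrn_q,
\]
computed from Theorem \ref{nkern} together with the admissible calculus $\mdbar \lra_2 = \lra_1 + \frac{1}{\gamma}\lra_2$. Each differentiation drops the type by one, but the $\gamma^3$-weight compensates: after commutation through $\gamma$-factors via Lemma \ref{gamcom}, the Green's-formula manipulation from (i) applied one more time (with $\lrn_q$ in place of the raw kernels) produces the $Z_1 \gamma^2 \square f$ term, and the remaining pieces collapse to $Z_1 \mdbar f + Z_1 \mdbar^{\ast} f$ by the same bookkeeping used for (i), now one degree weaker.

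The main obstacle is the justification of integration by parts and differentiation under the integral, since the kernels $\lrn_q$ are singular on the boundary diagonal and $\partial D$ has Morse singularities. The paper's standing convention (noted after Lemma \ref{lemmalq}) of working first on the smooth subdomains $D_\epsilon = \{r < -\epsilon\}$, restricting all kernels there, and then passing to $\epsilon \to 0$ in the weighted $L^p$ norms, is the appropriate framework: the $\gamma$-weights control the behavior near the critical points, and the boundary conditions of $\lrn_q$ and of $\mathrm{Dom}(\square)$ guarantee that the boundary contributions vanish in the limit. Once this is in place, what remains is a routine but careful accounting of types, analogous to the computations already carried out in the proof of Theorem \ref{nkern}.
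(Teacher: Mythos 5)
Your argument for part $i)$ matches the paper's: multiply the representation (\ref{starthere}) by the weights, absorb them into the kernels via $\gamma-\gamma^{\ast}=\sigma_1$, and combine the two leading pairings into $\gamma^{\ast}(\gamma^2\square f,\lrn_q)$ by Green's formula, the boundary term dying because of $\ast_{\zeta}\lrn_q|_{\partial D}=0$ and the Neumann conditions on $f$. That part is fine.

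For parts $ii)$ and $iii)$, however, your plan — apply $\mdbar_z$ (resp. $\mdbar_z^{\ast}$) to (\ref{starthere}) and differentiate under the integral — has a genuine gap. It forces you to control $\mdbar_z\mdbar_{\zeta}\lrn_q$ and $\mdbar_z\mdbar^{\ast}_{\zeta}\lrn_q$, i.e.\ $z$-derivatives of $\lrt_q$ and $\lrt_{q-1}^{\ast}$, which are nowhere estimated in the paper; since $\lrt_q$ is of type $1$, a $z$-derivative generically produces type-$0$ kernels, which are at the edge of integrability and explicitly outside the class the paper works with. Your assertion that ``the $\gamma^3$-weight compensates'' for the type drop conflates two different degenerations: the $\gamma$-weights tame the anisotropic behavior near the critical points of $r$ (they raise $N$ in (\ref{typerep}) and can only help when $N+M$ realizes the minimum in the type formula), but they do nothing to repair the increased singularity in $P$ and $\overline{\Phi}$ along the boundary diagonal, so the resulting operators on $\mdbar f$ and $\mdbar^{\ast}f$ would not be $Z_1$. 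The paper's device avoids differentiating in $z$ altogether: first integrate by parts in the $(\mdbar^{\ast}f,\cdot)$ error term of (\ref{tointpart}) (using $\mdbar\lra_2=\lra_1+\frac{1}{\gamma}\lra_2$) so that the representation (\ref{todbar}) no longer involves $\mdbar^{\ast}f$ in its error terms, and then \emph{substitute $\mdbar f$ for $f$} in that representation. This kills the $(\mdbar(\mdbar f),\cdot)$ terms outright, turns the remaining leading term into $\gamma^{\ast}(\gamma^2\mdbar^{\ast}\mdbar f,\mdbar^{\ast}\lrn_{q+1})$ with $\mdbar^{\ast}\mdbar f=\square f-\mdbar\mdbar^{\ast}f$, and the leftover $(\gamma\square f,\lra_2)$ pairing is reduced to $Z_1\mdbar f+Z_1\mdbar^{\ast}f$ by one more integration by parts. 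You need that substitution idea (legitimate because $\mdbar f\in Dom(\mdbar)\cap Dom(\mdbar^{\ast})$ for $f\in Dom(\square)$); without it the type bookkeeping does not close.
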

\begin{proof}
 $i)$.
Start with the equation \ref{starthere}:
\begin{align*}
\nonumber
 f_{}(z)=&(\mdbar
f_{},\mdbar\lrn^{}_q) +(\mdbar_{}^{\ast}f_{},
\mdbar^{\ast}_{}\lrn^{}_{q})
\\
\nonumber
 &
 +
 \left(\mdbar f_{},\frac{1}{\gamma\gamma^{\ast}}\lra_{2}^{}
 + \lre_{2-2n}\right)
 +\left(\mdbar_{}^{\ast} f_{},
\frac{1}{\gamma\gamma^{\ast}}\lra_{2}^{}
 + \lre_{2-2n}\right)\\
 \nonumber
 &
 +\left(f_{},
\frac{1}{\gamma^{\ast}}\lra_{1}^{}+
\frac{1}{\gamma\gamma^{\ast}}\lra_{2}^{}+
 \lre_{1-2n}\right)
.
\end{align*}
Apply to $\gamma^2 f$, using
\begin{align*}
&\mdbar \lrn_q = \frac{\gamma}{\gamma^{\ast}} \lra_{1}
 +\frac{1}{\gamma}\lra_{2}
 +\frac{1}{\gamma^{\ast}}\lra_{2} + \lre_{1-2n}\\
&\mdbar^{\ast} \lrn_q = \frac{\gamma}{\gamma^{\ast}} \lra_{1}
 +\frac{1}{\gamma}\lra_{2}
 +\frac{1}{\gamma^{\ast}}\lra_{2} + \lre_{1-2n}:
\end{align*}

\begin{align}
\nonumber
 \gamma^2 f_{}(z)=&(\gamma^2 \mdbar
f_{},\mdbar\lrn^{}_q) +(\gamma^2 \mdbar_{}^{\ast}f_{},
\mdbar^{\ast}_{}\lrn^{}_{q})
\\
\nonumber
 &
 +
 \left(\mdbar f_{},\frac{\gamma}{\gamma^{\ast}}\lra_{2}^{}
 + \lre_{2-2n}\right)
 +\left(\mdbar_{}^{\ast} f_{},
\frac{\gamma}{\gamma^{\ast}}\lra_{2}^{}
 + \lre_{2-2n}\right)\\
 \label{tointpart}
 &
 +\left( f_{},
\frac{\gamma^2}{\gamma^{\ast}}\lra_{1}^{}+
\frac{1}{\gamma^{\ast}}\lra_{2}^{}+
 \lre_{1-2n}\right)
.
\end{align}
Multiplying (\ref{tointpart}) by $\gamma^{\ast}$ gives us
\begin{align*}
\gamma^3 f_{}(z)=&\gamma^{\ast}(\gamma^2 \mdbar
f_{},\mdbar\lrn^{}_q) +\gamma^{\ast}(\gamma^2
\mdbar_{}^{\ast}f_{}, \mdbar^{\ast}_{}\lrn^{}_{q})
 +Z_2 \mdbar f + Z_2  \mdbar^{\ast} f
+Z_1  f \\
=& \gamma^{\ast} (\gamma^2 \square f, \lrn_q) + Z_2 \mdbar f
 +Z_2 \mdbar^{\ast} f + Z_1 f.
\end{align*}

To establish $ii)$, we integrate by parts in the fourth term on
the right of (\ref{tointpart})
 and use the fact that the $\lra_2$ term satisfies
\begin{equation*}
\mdbar \lra_2 =\lra_1+ \frac{1}{\gamma}\lra_2.
\end{equation*}
We obtain
\begin{align*}
\nonumber
 \gamma^2 f_{}(z)=&(\gamma^2 \mdbar
f_{},\mdbar\lrn^{}_q) +(\gamma^2 \mdbar_{}^{\ast}f_{},
\mdbar^{\ast}_{}\lrn^{}_{q})
\\
\nonumber
 &
 +
 \left(\mdbar f_{},\frac{\gamma}{\gamma^{\ast}}\lra_{2}^{}
 + \lre_{2-2n}\right)
 +\left(f_{},
\frac{\gamma}{\gamma^{\ast}}\lra_{1}^{} +
 \frac{1}{\gamma^{\ast}}\lra_{2}^{}
 + \lre_{1-2n} \right)
.
\end{align*}
Then after multiplying by $\gamma^{\ast}$ we have
\begin{equation}
\label{todbar}
 \gamma^3 f_{}(z)=\gamma^{\ast}(\gamma^2 \mdbar
f_{},\mdbar\lrn^{}_q) +\gamma^{\ast}(\gamma^2
\mdbar_{}^{\ast}f_{}, \mdbar^{\ast}_{}\lrn^{}_{q})
 +Z_2 \mdbar f
+Z_1  f .
\end{equation}
We now apply (\ref{todbar}) to $\mdbar f$:
\begin{align}
\nonumber
 \gamma^3 \mdbar f(z)=& \gamma^{\ast}
 (\gamma^2 \mdbar^{\ast}\mdbar f,
\mdbar^{\ast}\lrn_{q+1}) +Z_1 \mdbar f\\
\label{z1gam}
 =&\gamma^{\ast}
 (\gamma^2 \square f,
\mdbar^{\ast}\lrn_{q+1})+ Z_1  \mdbar f
 + Z_1 \mdbar^{\ast} f.
\end{align}
Now use
\begin{equation*}
\mdbar^{\ast} \lrn_{q+1} = \frac{\gamma}{\gamma^{\ast}} \lra_{1}
 +\frac{1}{\gamma}\lra_{2}
 +\frac{1}{\gamma^{\ast}}\lra_{2} + \lre_{1-2n},
\end{equation*}
which follows from Proposition \ref{neum}.

The term $\gamma^{\ast} (\gamma^2\square f, \mdbar^{\ast}
 \lrn_{q+1})$ is then written
 \begin{equation*}
\gamma^{\ast} (\gamma^2\square f, \mdbar^{\ast}
 \lrn_{q+1}) = Z_1 \gamma^2 \square f +
  (\gamma \square f,\lra_2).
 \end{equation*}
The last term can be written as
\begin{align*}
 (\gamma \square f,\lra_2)
  =&  (\mdbar\mdbar^{\ast} f,\gamma \lra_2) +
   (\mdbar^{\ast}\mdbar f, \gamma \lra_2)\\
  =&  (\mdbar^{\ast}f, \gamma \lra_1+\lra_2)
   + (\mdbar f, \gamma \lra_1+\lra_2)\\
   =& Z_1 \mdbar f + Z_1 \mdbar^{\ast} f.
 \end{align*}

 Putting everything together we write
 \begin{equation*}
\gamma^{\ast} (\gamma^2\square f, \mdbar^{\ast}
 \lrn_{q+1}) = Z_1 \gamma^2 \square f +  Z_1 \mdbar f + Z_1 \mdbar^{\ast}
 f,
\end{equation*}
and so by (\ref{z1gam}) we have
\begin{equation*}
\gamma^3 \mdbar f(z)=Z_1 \gamma^2 \square f +  Z_1 \mdbar f + Z_1
\mdbar^{\ast}
 f.
\end{equation*}

In the same way, we obtain $iii)$.
\end{proof}
\begin{thrm}
 \label{mainint}
\begin{align*}
 \nonumber
&i)\ \gamma^{3j}f
 =\gamma^{\ast}(\gamma^{3j-1}\square f, \lrn_q)
 +\sum_{k=3}^{j+1} Z_k\gamma^{3(j-k)+5}\square f\\
 &\qquad\qquad
 +Z_{j+1} \mdbar f + Z_{j+1}\mdbar^{\ast}f +
 Z_j f\\
 \nonumber
&ii)\ \gamma^{3j}\mdbar f
 =\sum_{k=1}^j Z_k \gamma^{3(j-k)+2}\square f + Z_j \mdbar f + Z_j
 \mdbar^{\ast} f\\
 \nonumber
&iii)\ \gamma^{3j}\mdbar^{\ast} f
 =\sum_{k=1}^j Z_k\gamma^{3(j-k)+2}\square f + Z_j \mdbar f + Z_j
 \mdbar^{\ast} f
 \end{align*}
\end{thrm}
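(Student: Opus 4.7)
The plan is induction on $j$, with base case $j=1$ given by Lemma \ref{gamma3}. For the inductive step, multiply each of $i)$, $ii)$, $iii)$ at level $j$ by $\gamma^3$ on the left; the two workhorses are Lemma \ref{gamcom}, which yields $\gamma^3 Z_k = Z_k\gamma^3 + Z_{k+1}$, and Lemma \ref{gamma3} itself, applied to any newly appearing $\gamma^3 f$, $\gamma^3\mdbar f$, or $\gamma^3 \mdbar^{\ast} f$. The governing arithmetic identity is $3(j-k)+a+3 = 3(j+1-k)+a$, so moving an extra factor of $\gamma^3$ into the $\square f$ exponent exactly advances the induction parameter.

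For part $ii)$ (and $iii)$ symmetrically): each summand $\gamma^3 Z_k\gamma^{3(j-k)+2}\square f$ splits by Lemma \ref{gamcom} into $Z_k\gamma^{3(j+1-k)+2}\square f + Z_{k+1}\gamma^{3(j-k)+2}\square f$; after relabeling $k+1\mapsto k$ in the second piece, the upper limit shifts from $j$ to $j+1$. The residual $\gamma^3 Z_j\mdbar f$ becomes $Z_j\gamma^3\mdbar f + Z_{j+1}\mdbar f$; Lemma \ref{gamma3} $ii)$ rewrites $\gamma^3\mdbar f$ as $Z_1\gamma^2\square f + Z_1\mdbar f + Z_1\mdbar^{\ast}f$, and composing with $Z_j$ (so that $Z_j\circ Z_1 = Z_{j+1}$) produces $Z_{j+1}\gamma^2\square f + Z_{j+1}\mdbar f + Z_{j+1}\mdbar^{\ast}f$; the first summand fills the missing $k=j+1$ slot, the remaining two are absorbed into the $Z_{j+1}$ tail. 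A parallel argument handles $\gamma^3 Z_j\mdbar^{\ast}f$ using Lemma \ref{gamma3} $iii)$.

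For part $i)$ the same scheme works with two additional inputs. First, the leading term $\gamma^{\ast}(\gamma^{3j-1}\square f,\lrn_q)$ must absorb an extra $\gamma^3$: by Theorem \ref{nkern} the composite $\gamma^{\ast}N_q$ has kernel $\lra_2 + \gamma^{\ast}\Gamma_{0q}$ and is therefore a $Z_2$-operator, so Lemma \ref{gamcom} gives
\begin{equation*}
\gamma^3\gamma^{\ast}(\gamma^{3j-1}\square f,\lrn_q) = \gamma^{\ast}(\gamma^{3(j+1)-1}\square f,\lrn_q) + Z_3\gamma^{3j-1}\square f,
\end{equation*}
and the remainder $Z_3\gamma^{3j-1}\square f = Z_3\gamma^{3(j+1-3)+5}\square f$ is precisely the $k=3$ entry of the new sum. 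Second, the term $\gamma^3 Z_j f$ becomes $Z_j\gamma^3 f + Z_{j+1}f$, and Lemma \ref{gamma3} $i)$ expands $\gamma^3 f$; the nontrivial piece $Z_j\gamma^{\ast}(\gamma^2\square f,\lrn_q) = Z_{j+2}\gamma^2\square f$ fills the $k=j+2$ slot, while $Z_jZ_2\mdbar f = Z_{j+2}\mdbar f$ and $Z_jZ_2\mdbar^{\ast}f = Z_{j+2}\mdbar^{\ast}f$ land in the tail. The pieces $\gamma^3 Z_{j+1}\mdbar f$ and $\gamma^3 Z_{j+1}\mdbar^{\ast}f$ are treated exactly as in part $ii)$, producing further $Z_{j+2}\gamma^2\square f$ contributions that are subsumed into the same $k=j+2$ slot, together with $Z_{j+2}\mdbar f$ and $Z_{j+2}\mdbar^{\ast}f$ in the tail.

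The main obstacle is purely bookkeeping: one must verify, after all commutations and substitutions, that the several sources of $Z_k\gamma^{\cdots}\square f$ terms (the shifted sum via Lemma \ref{gamcom}, the commutator correction from the leading integral, and the reductions produced by Lemma \ref{gamma3}) align into a single sum over the asserted range $\sum_{k=3}^{j+2}$ (resp.\ $\sum_{k=1}^{j+1}$) with the correct exponent $3(j+1-k)+5$ (resp.\ $+2$), and that the $\mdbar f$, $\mdbar^{\ast}f$, and $f$ remainders collapse into the asserted classes $Z_{j+2}$, $Z_{j+2}$, $Z_{j+1}$ (resp.\ $Z_{j+1}$, $Z_{j+1}$). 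No new analytic estimate beyond those developed in Section \ref{prelim} and Lemmas \ref{gamma3}, \ref{gamcom} is required.
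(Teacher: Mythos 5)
Your proposal is correct and follows essentially the same route as the paper: induction on $j$ with base case Lemma \ref{gamma3}, multiplying by $\gamma^3$, commuting via Lemma \ref{gamcom}, using $\gamma^{\ast}\lrn_q=Z_2$ for the leading term, and re-substituting Lemma \ref{gamma3} into the residual $\gamma^3 f$, $\gamma^3\mdbar f$, $\gamma^3\mdbar^{\ast}f$ terms. Your index bookkeeping (e.g.\ $Z_3\gamma^{3j-1}\square f$ as the $k=3$ entry and $Z_{j+2}\gamma^2\square f$ as the $k=j+2$ entry) checks out and is in fact more explicit than the paper's own write-up.
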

\begin{proof}
$i)$. The proof is by induction, the first step being Lemma
\ref{gamma3} $i)$.

To prove $i)$ for $j+1$ we multiply by $\gamma^3$ and commute the
$\gamma$'s with the $Z_k$ operators using repeatedly $\gamma^3 Z_k
= Z_k \gamma^3 + Z_{k+1}$.

Since $\gamma^{\ast} \lrn_q=Z_2$, in light of the above, we can
write
\begin{equation*}
(\gamma^{\ast})^3
 \gamma^{\ast}(\gamma^{3j-1}\square f, \lrn_q)=
\gamma^{\ast}(\gamma^{3(j+1)-1}\square f,
\lrn_q)+Z_3\gamma^{3j-1}\square f.
\end{equation*}

 Thus multiplying $i)$ by $\gamma^3$ gives
\begin{align}
\nonumber
 \gamma^{3(j+1)}f
 =&\gamma^{\ast}(\gamma^{3(j+1)-1}\square f, \lrn_q)+Z_3\gamma^{3j-1}\square f
 \\
\nonumber
 & +\sum_{k=3}^{j+1} Z_k\gamma^{3(j+1-k)+5}\square f
 +\sum_{k=3}^{j+1} Z_{k+1}\gamma^{3(j+1-k)+5}\square f\\
\nonumber
 & +Z_{j+1} \gamma^3 \mdbar f + Z_{j+1}\gamma^3
\mdbar^{\ast}f +
 Z_j \gamma^3 f\\
\nonumber
 & +Z_{j+2}  \mdbar f + Z_{j+2} \mdbar^{\ast}f +
 Z_{j+1}  f\\
\nonumber
 =&\gamma^{\ast}(\gamma^{3(j+1)-1}\square f, \lrn_q) +
 \sum_{k=3}^{j+2} Z_{k}\gamma^{3(j+1-k)+5}\square f\\
\nonumber
 & +Z_{j+1} \gamma^3 \mdbar f + Z_{j+1}\gamma^3
\mdbar^{\ast}f +
 Z_j \gamma^3 f\\
\label{insert}
 & +Z_{j+2}  \mdbar f + Z_{j+2} \mdbar^{\ast}f +
 Z_{j+1}  f
 \end{align}
Inserting the expressions for $\gamma^3f$, $\gamma^3 \mdbar f$ and
$\gamma^3 \mdbar^{\ast} f$ from Lemma \ref{gamma3}, we can write
\begin{align*}
&Z_j \gamma^3 f=
 Z_{j+2} \gamma^2 \square f +
 Z_{j+2}\mdbar f +Z_{j+2}\mdbar^{\ast} f
 +Z_{j+1} f\\
&Z_{j+1} \gamma^3 \mdbar f=
 Z_{j+2} \gamma^2 \square f +
 Z_{j+2}\mdbar f +Z_{j+2}\mdbar^{\ast} f\\
&Z_{j+1} \gamma^3 \mdbar^{\ast} f=
 Z_{j+2} \gamma^2 \square f +
 Z_{j+2}\mdbar f +Z_{j+2}\mdbar^{\ast} f.
\end{align*}
Finally, inserting these into (\ref{insert}) we obtain $i)$.

$ii)$ and $iii)$ are proved similarly.
\end{proof}

\section{Asymptotic development of the Neumann operator}
Let
\begin{equation*}
 H_q: L^2_{0,q}\rightarrow \mathbb{H}^q \qquad q\ge 1
\end{equation*}
denote the orthogonal projection operator onto the harmonic space.
We first note that in $\mathbb{C}^n$, the projection $H_q$ is just
 the 0 operator, however, we include this consideration of the
projection in order that the proof of Theorem \ref{intmain} below
goes through also in the case of domains in complex manifolds.

 Since for $f\in\mathbb{H}^q$, $\mdbar f$ and $\mdbar^{\ast} f$
 vanish,
 from Theorem \ref{bir} we conclude
\begin{equation*}
 \gamma^2 H_q f = Z_1 H_q f.
\end{equation*}
Multiplying by $\gamma^2$ and commuting the $\gamma^2$ with the
$Z_1$-operator leads to
\begin{align*}
\gamma^4 H_q f &= \gamma^2Z_1 H_q f\\
\gamma^4 H_q f &= Z_1 \gamma^2 H_q f +Z_2 H_q f\\
\gamma^4 H_q f &= Z_2 H_q f.
\end{align*}
An induction argument then yields
\begin{equation*}
\gamma^{2j}H_q f=Z_j H_qf.
\end{equation*}

\begin{thrm}
\label{intmain}
\begin{align*}
&i)\ \gamma^{3j} N_q f = \gamma^{\ast} (\gamma^{3j-1} f, \lrn_q)
 +Z_3\gamma^2 f+ C_{j}^{(j)} f \\
&ii)\ \gamma^{3j}\mdbar^{\ast} N_q f
 = \gamma^{\ast}(\gamma^{3j-1} f, \lrt_{q-1}) +Z_2\gamma^2 f +C_{j}^{(j)} f \\
&iii)\ \gamma^{3j}\mdbar N_q f
 = \gamma^{\ast}(\gamma^{3j-1} f, \lrt_{q}^{\ast}) +Z_2\gamma^2 f+
 +C_{j}^{(j)} f .
\end{align*}
\end{thrm}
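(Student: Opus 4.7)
The plan is to prove Theorem \ref{intmain} by substituting $N_q f$ for $f$ in appropriate integral representations and then exploiting three absorption mechanisms:
(a) the smoothing identity $\gamma^{2\ell} H_q = Z_\ell H_q$ (established at the start of this section), which pushes any harmonic-projection contribution into arbitrarily high type;
(b) the $L^2$-boundedness of $N_q$, $\mdbar N_q$, and $\mdbar^{\ast} N_q$, which lets any composition $Z_k\circ B$ with $B$ one of these operators be read as a term in the second summand of $C_j^{(j)}$; and
(c) the commutation Lemma \ref{gamcom}, $\gamma^m Z_k = Z_k\gamma^m + Z_{k+1}$, used repeatedly in the induction on $j$.

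For part $i)$, I would substitute $f\leftarrow N_q f$ in Theorem \ref{mainint} $i)$ and use $\square N_q = I - H_q$. The harmonic contribution is absorbed by (a); the three operators $Z_j N_q f$, $Z_{j+1}\mdbar N_q f$, $Z_{j+1}\mdbar^{\ast}N_q f$ go into $C_j^{(j)}$ by (b); the sum $\sum_{k=3}^{j+1} Z_k \gamma^{3(j-k)+5} f$ reduces to the single symbol $Z_3\gamma^2 f$ (the $k\ge j$ pieces falling into $C_j^{(j)}$, the intermediate ones of type $\ge 3$ being special cases of $Z_3$). For $ii)$ and $iii)$ the principal kernels $\lrt_{q-1}$ and $\lrt_q^{\ast}$ are not visible in Theorem \ref{mainint} $ii)$, $iii)$ as written, since those lemmas have already absorbed them into generic $Z_k$'s. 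I therefore go back one step: for $iii)$ to equation (\ref{z1gam}), where I expand $\mdbar^{\ast}\lrn_{q+1} = \lrt_q^{\ast}+\frac{1}{\gamma\gamma^{\ast}}\lra_2$ via Theorem \ref{nkern} before absorbing; for $ii)$ to the integral representation (\ref{starthere}) applied to the $(0,q{-}1)$-form $\mdbar^{\ast}N_q f$ itself, where $\mdbar^{\ast}\mdbar^{\ast}=0$ cuts the representation down to $(\mdbar\mdbar^{\ast}N_q f, \mdbar\lrn_{q-1})$, and I expand $\mdbar\lrn_{q-1} = \lrt_{q-1}+\frac{1}{\gamma\gamma^{\ast}}\lra_2$ via Theorem \ref{nkern}. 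In both cases, $\square N_q = I - H_q$ rewrites $\mdbar\mdbar^{\ast} N_q = I - H_q - \mdbar^{\ast}\mdbar N_q$, and the last summand is $L^2$-bounded, so (a) and (b) dispose of the non-principal contributions.

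The induction $j\mapsto j+1$ proceeds as in Theorem \ref{mainint}: multiply by $\gamma^3$, redistribute using (c), and collapse type-improved corrections into $C_j^{(j)}$. The key computation that I expect to be the main technical obstacle is the \emph{Leibniz error} incurred when the outer weight $\gamma^{3j}(z)$ is moved inside the integral against $\lrn_q$, $\lrt_{q-1}$, or $\lrt_q^{\ast}$: using $\gamma^m(z)-\gamma^m(\zeta) = \sigma_1 \xi_{m-1}$, the principal kernel acquires a $\sigma_1$ factor, raising its type by one. I must verify that the resulting correction is genuinely of the advertised form $Z_2\gamma^2 f$ (for $ii)$, $iii)$) or $Z_3\gamma^2 f$ (for $i)$), and not of a weaker form such as $Z_k\gamma f$. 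This is where the difference in principal-kernel type (type $1$ for $\lrt$, type $2$ for $\lrn$) manifests as the difference between the $Z_2$ and $Z_3$ error symbols in the three statements; the rest of the bookkeeping---verifying that every stray byproduct is of type $\ge j$ in the $j$-asymptotic sense of Section \ref{zop}---should follow routinely by repeated use of (a), (b), (c).
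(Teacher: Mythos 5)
Your proposal follows essentially the same route as the paper: part $i)$ is obtained exactly as in the text by substituting $N_qf$ into Theorem \ref{mainint} $i)$ and using $\square N_q=I-H_q$ together with $\gamma^{2j}H_q=Z_jH_q$ and the $L^2$-boundedness of $N_q$, $\mdbar N_q$, $\mdbar^{\ast}N_q$ to form $C_j^{(j)}$. For $ii)$ and $iii)$ your device of stepping back to (\ref{z1gam}) (resp.\ to the representation of $\mdbar^{\ast}N_qf$) and expanding $\mdbar^{\ast}\lrn_{q+1}$ and $\mdbar\lrn_{q-1}$ via Theorem \ref{nkern} is the same mechanism the paper invokes when it writes $Z_1\gamma=\gamma^{\ast}\lrt_{q-1}\gamma+Z_2$ for the leading term of Theorem \ref{mainint}, so the difference is purely organizational.
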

\begin{proof}
$i.$
 We apply Theorem \ref{mainint} to $N_q f$:
\begin{align*}
 \gamma^{3j} N_q f =&
 \gamma^{\ast} (\gamma^{3j-1}\square N_q f, \lrn_q)
 +\sum_{k=3}^{j+1} Z_k\gamma^{3(j-k)+5}\square N_q f\\
 &+Z_{j+1} \mdbar N_q f + Z_{j+1}\mdbar^{\ast}N_q f +
 Z_j N_q f .
\end{align*}
 By definition
\begin{equation*}
 \square N_q f = f- H_qf,
\end{equation*}
and so we can write
\begin{align*}
 \gamma^{3j} N_q f =&
 \gamma^{\ast} (\gamma^{3j-1} f, \lrn_q)
 -\gamma^{\ast}(\gamma^{3j-1}H_q f,\lrn_q)
 +\sum_{k=3}^{j+1} Z_k\gamma^{3(j-k)+5}(f-H_qf)\\
 &+Z_{j+1} \mdbar N_q f + Z_{j+1}\mdbar^{\ast}N_q f +
 Z_j N_q f\\
=&
 \gamma^{\ast} (\gamma^{3j-1} f, \lrn_q)
 + Z_3\gamma^{2}f+Z_j H_qf\\
&
 +Z_{j+1} \mdbar N_q f + Z_{j+1}\mdbar^{\ast}N_q f +
 Z_j N_q f ,
\end{align*}
where we use
\begin{equation*}
\gamma^{\ast}(\gamma^{3j-1}H_q f,\lrn_q)
 = Z_j H_q f
\end{equation*}
and
\begin{equation*}
Z_k\gamma^{3(j-k)+5}H_qf = Z_k \circ Z_{j-k}H_qf = Z_j H_qf.
\end{equation*}

$ii.$  To prove $ii.$ we note the first $Z_1$ operator on the
right hand side of Theorem \ref{mainint} $ii)$, in view of Theorem
\ref{nkern},
 is related to
$\lrt_{q-1}$ by
\begin{equation*}
Z_1\gamma=\gamma^{\ast}\lrt_{q-1}\gamma
 +Z_2.
\end{equation*}
We therefore write Theorem \ref{mainint} $ii)$ as
\begin{equation*}
\gamma^{3j}\mdbar^{\ast} f
 =\gamma^{\ast}(\gamma^{3j-1} \square f, \lrt_{q-1})
+
 \sum_{k=2}^{j} Z_k\gamma^{3(j-k)+2}\square f
 +Z_{j} \mdbar f + Z_{j}\mdbar^{\ast}f.
\end{equation*}
Replacing $f$ with $N_qf$, we obtain
\begin{align*}
\gamma^{3j}\mdbar^{\ast} N_q f
 =& \gamma^{\ast}(\gamma^{3j-1} f, \lrt_{q-1})
 -\gamma^{\ast}(\gamma^{3j-1} H_q f, \lrt_{q-1})
\\ & +Z_2\gamma^2 f-
 \sum_{k=2}^{j} Z_k\gamma^{3(j-k)+2} H_q f
+Z_{j} \mdbar N_q f + Z_{j}\mdbar^{\ast}N_q f \\
=& \gamma^{\ast}(\gamma^{3j-1} f, \lrt_{q-1}) +
  Z_2\gamma^{3(j-k)+2} f +Z_j H_q f
  +Z_{j} \mdbar N_q f + Z_{j}\mdbar^{\ast}N_q f .
\end{align*}

$iii)$  The proof of $iii)$, in which we make use of the relation
Theorem \ref{mainint} $iii)$, follows as does that of $ii)$
\end{proof}

As mentioned above, in $\mathbb{C}^n$ we would have $H=0$, and the
proof simplifies. In particular, a cruder version of Theorem
\ref{mainint} would suffice.

Our Main Theorems \ref{mainpp} and \ref{mainnq} now follow
 from Theorem \ref{intmain} after taking into account
Lemma \ref{gamcom} and the types of the various operators.  For
instance, setting
\begin{equation*}
N_q^0 f = (f,\lrn_q),
\end{equation*}
we have
\begin{equation*}
\gamma^{3j} N_q f = \gamma^{3j} N_q^0 f +Z_3 \gamma^2 f
 +C_j^{(j)}f.
\end{equation*}

  In
particular, we can add to Main Theorem \ref{mainpp}
\begin{thrm} For $1\le q\le n-3$,
\begin{align*}
 &i)\ N_q \overset{pp}{=} N_q^0 \mbox{ of type }2\\
 &ii)\ \mdbar  N_q \overset{pp}{=}  {\bold T}_q^{\ast} \mbox{ of type }1\\
 &iii)\ \mdbar^{\ast}  N_q \overset{pp}{=}  {\bold T}_{q-1} \mbox{ of type }1.
 \end{align*}
\end{thrm}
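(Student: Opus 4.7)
The plan is to read off each of the three assertions directly from Theorem \ref{intmain}, after verifying that the candidate operators $N_q^0$, $\mathbf{T}_q^\ast$, $\mathbf{T}_{q-1}$ do satisfy the two clauses in the definition of ``principal part.''

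First I would check the generalized $Z$-operator condition. From Theorem \ref{nkern} we have
\begin{equation*}
\lrn_q = \frac{1}{\gamma^\ast}\lra_2 + \Gamma_{0q},
\end{equation*}
so $\gamma^\ast \lrn_q = \lra_2 + \gamma^\ast \Gamma_{0q}$, which is a $Z_2$-kernel (admissible type $2$ plus a bounded multiple of a type $2$ isotropic kernel). Interpreting the factor $\gamma^\ast$ on the kernel side as multiplication by $\gamma(z)$ on the operator side, this shows that $\gamma\, N_q^0$ is a $Z_2$-operator; the $\frac{1}{\gamma^\ast}$ factor prevents the case $l=0$, so $N_q^0$ is a generalized $Z$-operator of type exactly $2$. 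The operators $\mathbf{T}_q^\ast$ and $\mathbf{T}_{q-1}$ have admissible type $1$ kernels by Theorem \ref{lrethrm}, so they are themselves $Z_1$-operators, i.e.\ generalized $Z$-operators of type $1$ (with $l=0$).

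Next I would carry out the asymptotic step. From Theorem \ref{intmain}$\,i)$ we have
\begin{equation*}
\gamma^{3j} N_q f = \gamma^\ast(\gamma^{3j-1} f, \lrn_q) + Z_3\,\gamma^2 f + C_j^{(j)} f.
\end{equation*}
Setting $M = \gamma^\ast N_q^0$ (a $Z_2$-operator by the previous paragraph), the first term on the right is $M(\gamma^{3j-1}f)$. Applying Lemma \ref{gamcom} to the identity $\gamma^{3j-1} M = M\circ \gamma^{3j-1}+Z_3$ gives
\begin{equation*}
\gamma^\ast(\gamma^{3j-1}f,\lrn_q) = \gamma^{3j-1}\gamma^\ast N_q^0 f + Z_3 f = \gamma^{3j} N_q^0 f + Z_3 f,
\end{equation*}
and hence
\begin{equation*}
\gamma^{3j} N_q f = \gamma^{3j} N_q^0 f + Z_3 f + Z_3\,\gamma^2 f + C_j^{(j)} f.
\end{equation*}
Given $l$, choose $j\ge l$ and $L=3j$: the error is a sum of a $Z_3$-operator and a $C_j^{(j)}$-operator, hence a $C_3^{(l)}$-operator. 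This gives clause (ii) of the principal part definition for $N_q^0$.

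For parts $ii)$ and $iii)$ of the theorem the argument is essentially identical, using Theorem \ref{intmain}$\,ii)$ and $iii)$ together with Lemma \ref{gamcom}. For instance, writing Theorem \ref{intmain}$\,iii)$ as
\begin{equation*}
\gamma^{3j}\mdbar N_q f = \gamma^\ast(\gamma^{3j-1} f, \lrt_q^\ast) + Z_2\,\gamma^2 f + C_j^{(j)} f,
\end{equation*}
I would commute $\gamma$ past the $Z_1$-operator $\mathbf{T}_q^\ast$: twice applying $\gamma^m Z_1 = Z_1\gamma^m + Z_2$ rewrites $\gamma^\ast(\gamma^{3j-1}f,\lrt_q^\ast)$ as $\gamma^{3j}\mathbf{T}_q^\ast f + Z_2 f$, whence
\begin{equation*}
\gamma^{3j}\mdbar N_q f = \gamma^{3j}\mathbf{T}_q^\ast f + Z_2 f + Z_2\gamma^2 f + C_j^{(j)} f,
\end{equation*}
and the error, for $j\ge l$, is a $C_2^{(l)}$-operator. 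The statement for $\mdbar^\ast N_q$ and $\mathbf{T}_{q-1}$ follows in exactly the same way from part $ii)$ of Theorem \ref{intmain}.

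The main bookkeeping obstacle is the consistent use of Lemma \ref{gamcom}: one must commute factors of $\gamma$ (living on the $z$-side) with $Z$-operators repeatedly, and each commutation raises the type by $1$, so the correct choice is to match the remaining tail terms in Theorem \ref{intmain} with exactly one commutation in the first term so that the error lands in type $m+1$ rather than in type $m$. Once this matching is done, the identification of principal parts is immediate.
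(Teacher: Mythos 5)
Your proposal is correct and follows essentially the same route as the paper: the paper likewise deduces the theorem from Theorem \ref{intmain} together with Lemma \ref{gamcom} and the type information in Theorem \ref{nkern}, arriving at the same identity $\gamma^{3j}N_qf=\gamma^{3j}N_q^0f+Z_3\gamma^2 f+C_j^{(j)}f$ and its analogues for $\mdbar N_q$ and $\mdbar^{\ast}N_q$. Your explicit verification of the generalized $Z$-operator condition (with $l=1$ for $N_q^0$ and $l=0$ for $\mathbf{T}_q^{\ast}$, $\mathbf{T}_{q-1}$) and the commutation bookkeeping simply make explicit what the paper leaves implicit.
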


We note that in the smooth case the above theorems coincide with
the known results (see \cite{LiMi}):  just set $\gamma\equiv 1$.

\end{document}